\renewcommand*{\backref}[1]{}
\renewcommand*{\backrefalt}[4]{%
        \ifcase #1 (Not cited.)%
        \or        (Cited on page~#2.)%
        \else      (Cited on pages~#2.)%
        \fi}
\newtheorem{Theorem}{Theorem}
\newtheorem{Lemma}{Lemma}
\newtheorem{Proposition}{Proposition}
\newtheorem{Corollary}{Corollary}
\newtheorem{Remark}{Remark}
\def\N{\mathbb{N}}
\def\Z{\mathbb{Z}}
\def\A{\mathcal{A}}
\def\H{\mathrm{H}}
\def\E{\mathrm{E}}
\def\F{\mathbb{F}_{2}}
\def\FF{\mathbb{F}}
\def\S{\mathbb{S}}
\def\ovk{\overline{\kappa}}
\def\Ext{\mathrm{Ext}}
\def\EG24{E_C^{hG_{24}}}
\def\hocolim{\mathrm{hocolim}}
\def\Gal {\mathrm{Gal}}
\begin{document}
\selectlanguage{english}
\title{On the $tmf$-Hurewicz image of $A_1$ }
\author{Viet-Cuong Pham
}
\date{\today}
\maketitle

\abstract{Let $A_1$ be any spectrum in the class of finite spectra whose mod-2 cohomology is isomorphic to $\A(1)$ as a module over the subalgebra $\A(1)$ of the Steenrod algebra; let $tmf$ be the connective spectrum of topological modular forms. In this paper, we prove that the $tmf$-Hurewicz image of $A_1$ is surjective.}
\let\thefootnote\relax\footnote{This work was partially supported by the ANR Project ChroK, ANR-16-CE40-0003 }
\tableofcontents

\section*{Introduction}
In \cite{DM81}, Davis and Mahowald constructed a class of connective finite spectra whose mod $2$ cohomology is isomorphic to $\A(1)$ as a module over the subalgebra $\A(1)$ generated by $Sq^1, Sq^2$ of the Steenrod algebra. This class of spectra has four different homotopy types, denoted by $A_1[ij]$ for $i,j\in \{0,1\}$, see the introduction of \cite{BEM17} for an explanation of this use of notation. We write $A_1$ to refer to any of the latter and call each of them a version of $A_1$. The spectrum $A_1$ is constructed via three cofiber sequences, starting from the sphere spectrum $S^0$, as follows. Let $V(0)$ be the mod $2$ Moore spectrum, i.e., the cofiber of multiplication by $2$ on $S^0$. Next, let $Y$ be the cofiber of multiplication by $\eta$, the first Hopf element, on $V(0)$. Then, Davis and Mahowald show that $Y$ admits a $v_1$-self maps, $v_1: \Sigma^2 Y\rightarrow Y$. Then $A_1$ is the cofiber of any of these $v_1$-self maps of $Y$. There are eight homotopy classes of $v_1: \Sigma^2 Y\rightarrow Y$, giving rise to four different homotopy types of $A_1$. We note further that the spectra $A_1[00]$ and $A_1[11]$ are Spanier-Whitehead self-dual and $A_1[10]$ and $A_1[01]$ are Spanier-Whitehead dual to each other.
\\\\
Let $tmf$ be the ring spectrum of connective topological modular forms. This spectrum is constructed using a certain sheave in ring spectra on the \'etale site of the moduli stack of elliptic curves, hence the name, see \cite{DFHH14} for the construction. The spectrum $tmf$ plays an important role in investigating chromatic level $2$ in chromatic homotopy theory. Let $H$ denote the $tmf$-Hurewicz map of $A_1$, i.e.,
$H: A_1\rightarrow tmf\wedge A_1$, given by smashing $A_1$ with the unit of $tmf$. The goal of this paper is to study the induced map in homotopy of $H$:
\begin{equation}\label{hurew-map}H_* : \pi_*(A_1)\rightarrow \pi_*(tmf\wedge A_1)
\end{equation}
 Closely related to the homomorphism (\ref{hurew-map}) is the edge homomorphism of the topological duality spectral sequence aiming at analysing the $K(2)$-localisation of $A_1$, where $K(2)$ is the second Morava $K$-theory at the prime $2$. This is, in fact, our initial motivation for studying (\ref{hurew-map}), see Section \ref{TDSS} for a further discussion. On another perspective, the map (\ref{hurew-map}) is the edge homomorphism of the $tmf$-based Adams spectral sequence for $A_1$. This is an upper half plane spectral sequence converging to $\pi_*(A_1)$ starting with the $\E_1$-term:
$$\E_1^{n, t} = \pi_{t}(tmf^{\wedge (n+1)}\wedge A_1)\Longrightarrow \pi_{t-n}(A_1).$$
 The main theorem of this paper is the following.
\begin{Theorem}\label{tmf-Hurc} The $tmf$-Hurewicz homomorphism $\pi_*(A_1)\rightarrow \pi_*(tmf\wedge A_1)$ is surjective for all versions of $A_1$.
\end{Theorem}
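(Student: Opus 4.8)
The plan is to compare the Adams spectral sequences computing $\pi_*(A_1)$ and $\pi_*(tmf\wedge A_1)$. Recall that $H^*(tmf;\F)\cong \A/\!/\A(2)$, so $H^*(tmf\wedge A_1;\F)\cong \A/\!/\A(2)\otimes_{\F} H^*(A_1;\F)$, where $H^*(A_1;\F)\cong \A(1)$. Since $\A/\!/\A(2)=\A\otimes_{\A(2)}\F$ is induced up from $\A(2)$, the shearing isomorphism $(\A/\!/\A(2))\otimes M\cong \A\otimes_{\A(2)}M$ together with change-of-rings identifies the Adams $\E_2$-term of $tmf\wedge A_1$ with $\Ext_{\A(2)}(\A(1),\F)$, whereas the $\E_2$-term for $A_1$ is $\Ext_{\A}(\A(1),\F)$. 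Under these identifications the map induced by $H$ on $\E_2$-pages is the restriction homomorphism $\Ext_{\A}(\A(1),\F)\to \Ext_{\A(2)}(\A(1),\F)$ attached to the inclusion $\A(2)\hookrightarrow\A$. The argument then amounts to showing that, after running both spectral sequences, every class of $\pi_*(tmf\wedge A_1)$ is hit.

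First I would compute the target $\pi_*(tmf\wedge A_1)$ completely. The group $\Ext_{\A(2)}(\A(1),\F)$ is determined by a finite, machine-checkable resolution, and I expect it to carry a periodicity operator of Bott/$v_2$-type: multiplication by a permanent cycle in a fixed positive stem that eventually makes the $\E_\infty$-page, and hence $\pi_*(tmf\wedge A_1)$, periodic. I would then resolve the Adams differentials, which should be forced in a narrow range by comparison with $tmf_*$ and by the module structure over $\Ext_{\A(2)}(\F,\F)$, and read off $\pi_*(tmf\wedge A_1)$ as a $\pi_*tmf$-module with an explicit finite list of generators.

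To prove surjectivity I would split the target into its ``periodic'' part and the finitely many low generators. For periodicity, I would use that $A_1$ admits a $v_2$-self map $v\colon \Sigma^{d}A_1\to A_1$; smashing with $tmf$ and detecting on homotopy shows that $H$ intertwines $v$ with the periodicity operator on $\pi_*(tmf\wedge A_1)$, so it suffices to hit a set of module generators lying in a single period. For those finitely many classes I would exhibit explicit preimages in $\pi_*(A_1)$: the lowest classes come from the bottom-cell inclusion (the unit) and the dual top-cell projection, while the remaining ones are produced by letting the known $tmf$-Hurewicz image of the sphere --- the elements $\eta,\nu,\epsilon,\kappa,\bar\kappa,\dots$ --- act, through the $\pi_*S^0$-module structure that $H$ respects, on the already-constructed low-dimensional elements of $\pi_*(A_1)$.

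The main obstacle is the bookkeeping of the target and the genuine lifting of the ``exotic'' generators. Controlling the Adams differentials and hidden extensions in $\pi_*(tmf\wedge A_1)$ precisely enough to pin down a correct and minimal generating set is delicate, since spurious $\E_2$-classes may die or merge under extensions. More seriously, a generator that is not a $\pi_*S^0$-multiple of a lower class cannot be lifted formally; for each such class I expect to need a direct construction --- a carefully chosen map out of a skeleton of $A_1$, or a Toda-bracket/attaching-map argument realizing the class geometrically --- and checking that the constructed element maps to the intended generator rather than to a lower-filtration correction is where the real work concentrates. Finally, one must verify that the argument is uniform across the four homotopy types $A_1[ij]$, whose $\A(2)$-module structures may differ, transferring the conclusion via the Spanier--Whitehead duality relations among them.
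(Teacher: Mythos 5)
Your framework coincides with the paper's reduction step: both identify the map of Adams $\E_2$-terms with the restriction $\Ext_{\A_*}^{*,*}(\H_*A_1)\rightarrow \Ext_{\A(2)_*}^{*,*}(\H_*A_1)$, both use the computation of $\pi_*(tmf\wedge A_1)$ from prior work, and both use the $v_2^{32}$-self map (identified, up to taking powers, with $\Delta^8$-periodicity on the target) together with the $\pi_*(S^0)$-action to reduce surjectivity to hitting a finite set of module generators in one $192$-period. Up to that point the proposal is sound, although the claim that $H$ ``intertwines'' the self-map with the periodicity operator itself needs an argument (the paper proves it via a nilpotence and vanishing-line argument, since $v_2^{32}$ and $\Delta^8$ only agree modulo higher Adams filtration).

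The genuine gap is the step you yourself flag as ``where the real work concentrates'': producing preimages of the generators. Your proposed mechanisms cannot do this. Within one period the generating set consists of roughly sixty classes in stems up to $170$, including $w_2e_i$, $w_2^2e_i$ and $\nu w_2^ke_i$; these are not $\pi_*(S^0)$-multiples of lower generators (the periodicity class $w_2$ is not in the image of the sphere, and only $\nu$, $\overline{\kappa}$ and the self-map act in the paper's module structure), and no Toda-bracket or skeletal construction is offered, so the proposal reduces to restating the problem for each such class. The paper's solution is entirely different in character and stays inside the Adams spectral sequence: (1) an Approximation Theorem, proved with a vanishing line of slope $1/5$ by comparing $\Ext_{\A(n)_*}$ for varying $n$, shows the restriction map on $\E_2$-terms is surjective, so every generator lifts algebraically to the ASS of $A_1$; (2) a structural theorem about $\Ext_{\A_*}(\H_*A_1)$ --- in specified regions every class is $g$-free and $g$-divisible, respectively weakly $g$-divisible --- is proved by induction along a tower of Hopf algebra extensions from $\A(2)_*$ to $\A(4)_*$ using Davis--Mahowald spectral sequences; (3) the ``exotic'' generators are chosen $\nu$-divisible, hence $g$-torsion (as $g^2\nu=0$ in the $\E_2$-term for $S^0$), while the possible targets of $d_2$, $d_3$, $d_4$ on them are $g$-free, so these differentials vanish, and all longer differentials die against the vanishing line; the self-dual versions $A_1[01]$, $A_1[10]$ need a further argument with explicit $d_2$-differentials and the Leibniz rule for $w_2^2$. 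Without ingredients playing the role of (1)--(3), your outline does not yield the theorem.
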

\noindent
As a consequence, we have the following theorem (see Theorem \ref{edge-proof}).
\begin{Theorem} \label{edge-homo}The edge homomorphism of the topological duality resolution $$\pi_*(E_C^{h\S^1_2}\wedge A_1)\rightarrow \pi_*(E_C^{hG_{24}}\wedge A_1)$$ is surjective.
\end{Theorem}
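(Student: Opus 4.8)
The plan is to deduce this from Theorem \ref{tmf-Hurc} by identifying the edge homomorphism with the $K(2)$-localisation of the $tmf$-Hurewicz map. The starting point is the standard identification $L_{K(2)}tmf\simeq E_C^{hG_{24}}$, where $G_{24}$ is the automorphism group of the supersingular curve $C$. Since $A_1$ is a finite, hence dualisable, spectrum, $K(2)$-localisation commutes with smashing against it, so that $L_{K(2)}(tmf\wedge A_1)\simeq E_C^{hG_{24}}\wedge A_1$. In the same way $L_{K(2)}A_1\simeq E_C^{h\G_2}\wedge A_1$, where $\G_2$ is the extended Morava stabiliser group and $E_C^{h\G_2}\simeq L_{K(2)}\S^0$ by Devinatz--Hopkins.

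Next I would assemble the comparison diagram. The inclusions of subgroups $G_{24}\subset \S^1_2\subset \G_2$ induce restriction maps $E_C^{h\G_2}\to E_C^{h\S^1_2}\to E_C^{hG_{24}}$, whose composite is the restriction $E_C^{h\G_2}\to E_C^{hG_{24}}$. By construction the second of these is the augmentation of the topological duality resolution, so the induced map $\pi_*(E_C^{h\S^1_2}\wedge A_1)\to \pi_*(E_C^{hG_{24}}\wedge A_1)$ is exactly the edge homomorphism. On the other hand, $K(2)$-localising the unit $\S^0\to tmf$ yields precisely the restriction $E_C^{h\G_2}\to E_C^{hG_{24}}=L_{K(2)}tmf$. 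Smashing with $A_1$ and precomposing with the localisation map $A_1\to E_C^{h\G_2}\wedge A_1$, I obtain a commuting triangle showing that the composite
\[
\pi_*(A_1)\xrightarrow{H_*}\pi_*(tmf\wedge A_1)\xrightarrow{\ell}\pi_*(E_C^{hG_{24}}\wedge A_1),
\]
with $\ell$ induced by $K(2)$-localisation, factors through the edge homomorphism. Consequently the image of the edge homomorphism contains the image of $\ell\circ H_*$.

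By Theorem \ref{tmf-Hurc} the map $H_*$ is surjective, whence $\mathrm{image}(\ell\circ H_*)=\mathrm{image}(\ell)$, and the proof reduces to showing that the localisation map $\ell:\pi_*(tmf\wedge A_1)\to \pi_*(E_C^{hG_{24}}\wedge A_1)$ is surjective. Here I would use that $A_1$ is a type-$2$ complex, so that $E_C^{hG_{24}}\wedge A_1$ is the $K(2)$-localisation of $tmf\wedge A_1$ and is periodic with respect to a power of the discriminant $\Delta$, realised by a self-map that is already a permanent cycle in the descent spectral sequence for $tmf\wedge A_1$. Comparing that descent spectral sequence with the homotopy fixed point spectral sequence $H^*(G_{24};(E_C)_*A_1)\Rightarrow \pi_*(E_C^{hG_{24}}\wedge A_1)$, the map $\ell$ is the one that inverts this periodicity class, and I would verify, degree by degree using the explicit computation of $\pi_*(tmf\wedge A_1)$ carried out for Theorem \ref{tmf-Hurc}, that every class in the target lifts.

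The main obstacle is this last step. Surjectivity of a localisation map on homotopy is not formal: the canonical map into a $v_2$-periodic colimit need not be onto, since a periodic class $x/w^k$ is hit only when $x$ is already divisible by the periodicity operator $w$. Establishing it therefore relies on the explicit structure of $\pi_*(tmf\wedge A_1)$ as a module over that operator and on checking that all the divisible, periodic classes in $\pi_*(E_C^{hG_{24}}\wedge A_1)$ are realised. I expect this to be where the computation underlying the main theorem does the real work, the diagrammatic reduction of the first two paragraphs being the easy part.
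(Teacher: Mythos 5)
Your diagrammatic reduction in the first two paragraphs is essentially the paper's (modulo a Galois-descent subtlety: $L_{K(2)}tmf$ is identified with $E_C^{h(G_{24}\rtimes\Gal)}$ rather than $E_C^{hG_{24}}$, and one compares the two by smashing with $\Gal_+$, using Lemma 1.37 of \cite{BG18}). The gap is in your final reduction: the statement you reduce to --- surjectivity of the localisation map $\ell\colon \pi_*(tmf\wedge A_1)\to \pi_*(E_C^{hG_{24}}\wedge A_1)$ --- is false, and no degree-by-degree check of the computation can save it. The source is connective while the target is $\Delta^8$-periodic; since $\pi_0(tmf\wedge A_1)\neq 0$, the groups $\pi_{-192k}(E_C^{hG_{24}}\wedge A_1)$ are nonzero for all $k>0$, and they cannot lie in the image of $\ell$, let alone of $\ell\circ H_*$. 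Indeed, by the theorem of \cite{Pha18} quoted in the introduction, $\ell$ is already an isomorphism in non-negative stems, so the entire content of the surjectivity statement sits in negative stems, which any factorisation through $\pi_*(tmf\wedge A_1)$ can never reach. You correctly flag this step as the "main obstacle," but as set up it is not an obstacle to be overcome by computation: it is a dead end.

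What the paper does instead, and what your argument is missing, is to invert $v_2^{32}$ on the $A_1$ side \emph{before} localising, rather than factoring through $\pi_*(tmf\wedge A_1)$. Since $H_*$ is surjective (Theorem \ref{tmf-Hurc}) and filtered colimits preserve surjections, the induced map of telescopes $(v_2^{32})^{-1}\pi_*(A_1)\to (v_2^{32})^{-1}\pi_*(tmf\wedge A_1)$ is surjective --- and the source now does contain the needed negative-stem classes, of the form $\tilde{x}/v_2^{32k}$. The non-formal inputs are then that $[(\Delta^8)^{-1}](tmf\wedge A_1)\to L_{K(2)}(tmf\wedge A_1)$ is an equivalence (Theorem 5.1.2 of \cite{Pha18}, a telescope-conjecture-type statement) and that $v_2^{32}$ agrees with $\Delta^8$ up to taking powers (proof of Proposition \ref{multi v2}), so that $[(v_2^{32})^{-1}](tmf\wedge A_1)\to L_{K(2)}(tmf\wedge A_1)$ is also an equivalence. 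Hence the surjection of telescopes computes $\pi_*L_{K(2)}(tmf\wedge A_1)$ exactly, and since the map of telescopes factors through $L_{K(2)}A_1\to L_{K(2)}(tmf\wedge A_1)$, that map is surjective on homotopy; feeding this into your (correct) comparison diagram of subgroup inclusions then gives surjectivity of the edge homomorphism.
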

\noindent
Let us discuss another consequence of Theorem \ref{tmf-Hurc}. The spectrum $A_1$ is a finite spectrum of type $2$ (i.e. $K(1)_*(A_1) = 0$, where $K(1)$ is the first Morava $K$-theory and $K(2)_*(A_1)\ne 0$), hence admits a $v_2$ self-map. In fact, the authors of \cite{BEM17} show that $A_1$ admits a $v_2$ self-map of periodicity $32$, i.e., a map $v_2^{32}: \Sigma^{192} A_1\rightarrow A_1$, that induces an isomorphism in $K(2)$-homology. Let $[(v_2^{32})^{-1}]A_1$ denote the associated telescope, i.e.,
$$[(v_2^{32})^{-1}]A_1 = \hocolim (A_1 \rightarrow \Sigma^{-192} A_1\rightarrow ...\rightarrow \Sigma^{-192k}A_1\rightarrow ...).$$
 Suppose that $x$ is a $v_2$-periodic element of $\pi_t(A_1)$, i.e., $x$ is a nontrivial element of $ \pi_t([(v_2^{32})^{-1}]A_1)$. This means that there is a map $x: S^{t}\rightarrow A_1$ such that the composite 
$$S^{t+192k}\xrightarrow{\Sigma^{192k}x} \Sigma^{192k}A_1\xrightarrow{v_2^{32k}}A_1$$ 
is essential for $k\in \N$. Since the top cell of $A_1$ is in dimension $6$, the composite $v_2^{32k}\circ \Sigma^{192k}x$ gives rise to a nontrivial element of $\pi_*S^0$ in the stem $192k +t - i_{k}$ for some $0\leq i_{k}\leq 6$. As $k$ varies in $\N$, the latter is called a $v_2$-periodic family of $S^0$.
The $K(2)$-localisation of $A_1$ is useful to detect $v_2$-periodic elements of $\pi_*(A_1)$ because the $K(2)$-localisation map $A_1\rightarrow L_{K(2)}A_1$ factors as $A_1\rightarrow [(v_2^{32})^{-1}]A_1\rightarrow L_{K(2)}A_1.$ Moreover, we prove in Theorem 5.3.22 of \cite{Pha18}
\begin{Theorem}[Pha18] The $K(2)$-localisation $tmf\wedge A_1\rightarrow L_{K(2)}(tmf\wedge A_1)$ induces an isomorphism on homotopy groups of non-negative stems. 
\end{Theorem}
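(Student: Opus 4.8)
The plan is to first replace the target by something computable. Since $A_1$ is a finite spectrum, $K(2)$-localisation commutes with smashing against it, so $L_{K(2)}(tmf\wedge A_1)\simeq (L_{K(2)}tmf)\wedge A_1$. Using the identification $L_{K(2)}tmf\simeq \EG24$, the statement becomes the assertion that the localisation map
$$\pi_*(tmf\wedge A_1)\longrightarrow \pi_*(\EG24\wedge A_1)$$
is an isomorphism in non-negative stems. This reduces everything to a comparison of two concrete homotopy computations: the homotopy of $tmf\wedge A_1$, accessible through the descent (elliptic) spectral sequence for $tmf$, and the homotopy of $\EG24\wedge A_1$, accessible through the homotopy fixed point spectral sequence for the action of $G_{24}$ on $(E_C)_*A_1$.

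Next I would run both spectral sequences in parallel and compare them via the map of spectral sequences induced by localisation. On $\E_2$-terms this is the comparison between the elliptic cohomology $H^*(\overline{\mathcal{M}}_{ell};\mathcal{F})$, with $\mathcal{F}$ the sheaf of graded modules attached to $A_1$, and the group cohomology $H^*(G_{24};(E_C)_*A_1)$. The crucial algebraic input is that $K(2)$-localisation corresponds, at the level of these cohomologies, to inverting the periodicity class (a suitable power of the discriminant $\Delta$, detecting the $v_2^{32}$-self map of $A_1$). Because this class lives in a positive stem, inverting it only creates new classes in negative stems; in non-negative stems the two $\E_2$-pages therefore already agree. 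I would make this precise by first computing $(E_C)_*A_1$ as a $G_{24}$-module — using the two defining cofibre sequences of $A_1$ and the known structure of $(E_C)_*$ of the relevant small spectra — and then matching the resulting group cohomology with the non-negative part of the elliptic $\E_2$-term.

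With the $\E_2$-comparison in hand, I would argue that the differentials and hidden extensions match in non-negative stems: the localisation map is a map of multiplicative spectral sequences, so a differential on the $tmf\wedge A_1$ side maps to the corresponding differential on the $\EG24\wedge A_1$ side, and the periodicity class being a permanent cycle transports differentials compatibly between the finite and the inverted pictures. Since both spectral sequences converge strongly (the descent one because $A_1$ is finite and the spectral sequence has a horizontal vanishing line), an isomorphism on associated graded in each non-negative stem upgrades to an isomorphism on homotopy in each non-negative stem.

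The main obstacle I anticipate is the behaviour near the bottom of the range. For large stems the periodic pattern on the $\EG24\wedge A_1$ side is exactly reproduced by $tmf\wedge A_1$, so the map is visibly an isomorphism; but in the low-dimensional range, before the $v_2^{32}$-periodicity takes over, one must check by hand that the connective computation has not acquired extra classes coming from the low-dimensional $2$-torsion phenomena in $\pi_*tmf$, and that no differential in the descent spectral sequence originates in a negative stem of the localised picture and lands in a non-negative stem. Controlling these finitely many potential discrepancies — essentially verifying that smashing with $A_1$ annihilates the low-dimensional torsion that distinguishes $tmf$ from its $K(2)$-localisation — is where the real work lies.
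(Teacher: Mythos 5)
Your opening reduction is fine: since $A_1$ is finite, $L_{K(2)}(tmf\wedge A_1)\simeq (L_{K(2)}tmf)\wedge A_1$ (modulo the Galois twist: $L_{K(2)}tmf\simeq E_C^{h(G_{24}\rtimes \Gal)}$, of which $\EG24$ is two copies). But the crux of your argument rests on a false principle: \emph{``because the periodicity class lives in a positive stem, inverting it only creates new classes in negative stems, so in non-negative stems the two $\E_2$-pages already agree.''} Inverting a positive-stem class creates new classes in \emph{every} stem whenever multiplication by that class fails to be surjective (newly divisible elements), and kills classes wherever it has torsion. The case of $tmf$ itself is a counterexample: $\pi_*(tmf[\Delta^{-1}])\otimes\mathbb{Q}$ contains classes such as $c_4^4\Delta^{-1}$ in stem $8$, which are not in the image of $\pi_*tmf$, and the analogous failure already occurs on $\E_2$-pages, since the map from modular forms to $\Delta$-inverted modular forms is far from surjective in non-negative weights. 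What makes the statement true for $tmf\wedge A_1$ is precisely that multiplication by $\Delta^8$ is \emph{already} an isomorphism $\pi_k(tmf\wedge A_1)\to \pi_{k+192}(tmf\wedge A_1)$ for $k\geq 0$ --- a hard computational fact special to $A_1$ being of type $2$ (smashing with $A_1$ kills the height $\leq 1$, i.e.\ $c_4,c_6$, contributions that obstruct periodicity for $tmf$ itself). Your proposal assumes this periodicity twice (in the $\E_2$-page claim, and again in the final paragraph, ``for large stems the periodic pattern \ldots is exactly reproduced by $tmf\wedge A_1$''), so as written the argument is circular: the periodicity is essentially equivalent to what is being proved.

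This is also where your route diverges from the proof this statement actually rests on (Theorems 5.1.2 and 5.3.22 of [Pha18], whose structure is visible in Section \ref{TDSS} of this paper). That proof has two separate inputs: (i) a complete computation of the Adams spectral sequence for $tmf\wedge A_1$, establishing the $\Delta^8$-periodicity of $\pi_*(tmf\wedge A_1)$ in non-negative stems; and (ii) the identification of the telescope, $[(\Delta^8)^{-1}](tmf\wedge A_1)\simeq L_{K(2)}(tmf\wedge A_1)$. Given these, the localisation map factors as $tmf\wedge A_1\to [(\Delta^8)^{-1}](tmf\wedge A_1)\to L_{K(2)}(tmf\wedge A_1)$, and (i) says exactly that the first map is an isomorphism on non-negative stems. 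To repair your proposal you would either have to carry out the spectral-sequence computation you defer --- which is where all the work lies --- or replace the $\E_2$-page claim by a genuinely structural argument, for instance: the sheaf $\mathcal{O}^{top}\wedge A_1$ on the compactified moduli stack vanishes on the ordinary locus and at the cusp (its sections there are of height $\leq 1$, hence are annihilated by the type-$2$ complex $A_1$), so $Tmf\wedge A_1$ agrees with its supersingular, i.e.\ $K(2)$-local, part; combined with the gap $\pi_nTmf=0$ for $-21<n<0$, which ensures $\pi_k(tmf\wedge A_1)\cong\pi_k(Tmf\wedge A_1)$ for $k\geq 0$. Neither of these is a consequence of $\Delta$ having positive stem.
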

\noindent 
Therefore, because of the commutative diagram
$$\xymatrix{ A_1\ar[d] \ar[rr] && tmf\wedge A_1\ar[d]\\
			L_{K(2)}A_1\ar[rr]&& L_{K(2)}(tmf\wedge A_1),
 }$$
 Theorem \ref{tmf-Hurc} asserts that all elements of $\pi_*(tmf\wedge A_1)$ lift to $v_2$-periodic elements of $\pi_*(A_1)$. It is then interesting to be able to locate the corresponding $v_2$-periodic families in $\pi_*(S^0)$.
\\\\
 \textbf{Convention.}
Unless otherwise stated, all spectra are localised at the prime $2$. $\mathrm{H}^{*}(X)$ and $\mathrm{H}_{*}(X)$ denote the mod-2 (co)homology of the spectrum $X$. Given a Hopf algebra $A$ over a field $k$ and $M$ a $A$-comodule, we abbreviate $\Ext_{A}^{*}(k,M)$ by $\Ext_{A}^{*}(M)$.\\\\
\noindent
\textbf{Acknowlegements.} The author would like to thank Hans-Werner Henn for many helpful discussions and for carefully reading an earlier draft of this paper and for suggesting improvements. 
\section{Background and tools}
\subsection{The dual of the Steenrod algebra}
Recall that the Steenrod algebra $\A$ is a co-commutative graded Hopf algebra over $\F$, generated by the Steenrod squares $Sq^{i}$ in degree $i$ for $i\geq 0$, which subject to the Adem relations 
$$Sq^{a}Sq^{b} = \sum\limits_{i=0}^{\lfloor \frac{a}{2} \rfloor} {b-i-1 \choose a-2i}  Sq^{a+b-i}Sq^{i}$$ 
for all $a,b >0$ and $a<2b$. In \cite{Mil58}, Milnor determines the dual  $\A_*$ of the Steenrod algebra, which is a commutative Hopf algebra over $\F$. As an algebra, $\A_*$ is a polynomial algebra generated by $\xi_i$ for $i\geq 0$ in degree $2^{i}-1$ with $\xi_0=1$, i.e., $$\A_*=\F[\xi_1, \xi_2, ...].$$ The coproduct or the diagonal is given by $$\Delta (\xi_k) = \sum\limits_{i=0}^{k}\xi_i^{2^{k-i}}\otimes \xi_{k-i}.$$  Denote by $\zeta_i$ the conjugate of $\xi_i$. Then we have that 
\begin{equation}\label{diagonal A} \Delta(\zeta_k) = \sum\limits_{i=0}^{k}\zeta_{i}\otimes \zeta_{k-i}^{2^{i}}.
\end{equation}
\textbf{Subalgebras of $\A$.} For $n\geq 1$, denote by $\A(n)$ the subalgebra of $\A$ generated by $Sq^{i}$ for $0\leq i\leq n$. The dual $\A(n)_*$ of $\A(n)$ is the quotient of $\A_*$ by the ideal $(\zeta_1^{2^{n+1}}, \zeta_2^{2^{n}}, ...,\zeta_{n}^{4}, \zeta_{n+1}^{2}, \zeta_{n+2}, ...)$, i.e., $$\A(n)_* = \F[\zeta_1, \zeta_2, ..., \zeta_{n+1}]/(\zeta_1^{2^{n+1}}, \zeta_2^{2^{n}}, ...,\zeta_{n}^{4}, \zeta_{n+1}^{2}).$$
\textbf{Notation.} If $A\rightarrow B$ is a map if Hopf algebras, then $A\square_{B}k$ denotes the group of primitives of $A$ viewed as a $B$-comodule via the given map.
\\\\
The main tool used in this paper to prove Theorem \ref{tmf-Hurc} is the Adams spectral sequence, which will be abbreviated by ASS in the sequel. More precisely, one has the following theorem, known to Hopkins and Mahowald and whose proof can be found in \cite{Mat16}.
\begin{Theorem}
There is an isomorphism of $\A_*$-comodule algebras:
$$\H_*tmf \cong \A_*\square_{\A(2)_*}\F.$$
\end{Theorem}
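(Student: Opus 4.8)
The plan is to prove the dual statement on cohomology, namely that $\H^*(tmf;\F)\cong \A\otimes_{\A(2)}\F =: \A//\A(2)$ as a module over $\A$, and then to transport the result to homology: the cotensor product $\A_*\square_{\A(2)_*}\F$ is exactly the $\F$-linear dual of $\A//\A(2)$, and the algebra structure on $\H_* tmf$ coming from the ring structure on $tmf$ dualizes the cyclic $\A$-module structure, so the comodule-algebra isomorphism follows once the module isomorphism is established. First I would construct the comparison map: the unit $\S\to tmf$ produces a fundamental class $\iota\in\H^0(tmf)=\F$, and evaluation $a\mapsto a\cdot\iota$ defines an $\A$-linear map $\A\to\H^*(tmf)$. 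To see that it factors through $\A//\A(2)$ I must check that $\A(2)^+\cdot\iota=0$; since $\A(2)^+$ is generated as a left ideal of $\A(2)$ by the algebra generators $Sq^1,Sq^2,Sq^4$, it suffices to show $Sq^1\iota=Sq^2\iota=Sq^4\iota=0$, which in turn follows from the vanishing of $\H^i(tmf)$ for $i=1,2,4$. This is a genuinely low-degree computation, read off from the well-known groups $\pi_*tmf$ in small stems via the Hurewicz map and the Adams spectral sequence.

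The heart of the matter is to show that the resulting map $\A//\A(2)\to\H^*(tmf)$ is an isomorphism, i.e.\ that $\H^*(tmf)$ is neither too small nor too large. For this I would use descent along the $\mathrm{E}_\infty$-$tmf$-algebra $tmf_1(3)$, which is a form of $BP\langle 2\rangle$ and whose cohomology is the classically known $\H^*(tmf_1(3))\cong \A//E$, where $E=E(Q_0,Q_1,Q_2)$ is the exterior subalgebra on the first three Milnor primitives; dually, $\H_*(tmf_1(3))\cong\A_*\square_{E_*}\F$ with $\A_*\twoheadrightarrow E_*$ the evident quotient of Hopf algebras. Since $tmf\to tmf_1(3)$ is a descent morphism, smashing the associated cosimplicial resolution $tmf_1(3)^{\wedge_{tmf}(\bullet+1)}$ with $\H\F$ yields a descent spectral sequence converging to $\H_*(tmf)$ whose $\mathrm{E}_2$-page is the cohomology of the cobar complex of the $\H_*(tmf)$-coalgebra $\H_*(tmf_1(3))$.

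The main obstacle, and the step demanding the most care, is the identification of this $\mathrm{E}_2$-page with $\A_*\square_{\A(2)_*}\F$ together with a collapse argument. The relevant comparison is between the two successive quotient Hopf algebras $\A_*\twoheadrightarrow\A(2)_*\twoheadrightarrow E_*$, organized by the normal inclusion $E\hookrightarrow\A(2)$: via a Cartan--Eilenberg (change-of-rings) analysis for this extension, the cobar complex built from $\A_*\square_{E_*}\F$ should compute the cotensor over the finer quotient, namely $\A_*\square_{\A(2)_*}\F$, and be concentrated in cohomological filtration zero. I expect the degeneration to reduce to a vanishing of the higher derived cotensor ($\mathrm{Cotor}$) groups over the quotient coalgebra $\A(2)_*\square_{E_*}\F$, reflecting that $E_*$ is suitably cofree relative to $\A(2)_*$; establishing this vanishing and ruling out higher differentials is where the real work lies. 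Granting the collapse, the edge homomorphism of the spectral sequence is precisely the dual of the comparison map $\A//\A(2)\to\H^*(tmf)$ constructed above, which is therefore an isomorphism, and the comodule-algebra structure on $\H_*(tmf)$ is read off directly from the ring structure of $tmf$.
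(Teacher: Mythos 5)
First, a point of comparison: the paper does not prove this theorem at all. It attributes the statement to Hopkins and Mahowald and refers to \cite{Mat16} for a proof, so your proposal has to be measured against that cited argument. Your overall strategy --- dualize, build the comparison map $\A//\A(2)\to \H^*(tmf)$ from low-degree vanishing, then bring in $tmf_1(3)$ via Lawson--Naumann's theorem that it is a form of $BP\langle 2\rangle$ and ``descend'' along $tmf\to tmf_1(3)$ --- is indeed the strategy of that proof, and your first steps are essentially fine. One minor complaint there: proving $\H^i(tmf)=0$ for $i=1,2,4$ ``via the Adams spectral sequence'' is circular, since the input of that spectral sequence is exactly $\H^*(tmf)$; instead one uses the known low-degree homotopy of $tmf$ (from the elliptic/descent spectral sequence) together with the fact that $\pi_i(tmf)$ for $1\leq i\leq 7$ is generated by the Hurewicz image of $\pi_i(S^0)$ ($\eta,\eta^2,\nu,\nu^2$), which forces the reduced homology to vanish in that range by a Postnikov-tower argument.

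The genuine gap is in the step you yourself call the heart of the matter. You assert that $tmf\to tmf_1(3)$ ``is a descent morphism'' and that the $\E_2$-page of the resulting spectral sequence is ``the cohomology of the cobar complex of the $\H_*(tmf)$-coalgebra $\H_*(tmf_1(3))$.'' Neither claim can be established as stated, and the second is circular: to write down that cobar complex you must identify $\H_*\bigl(tmf_1(3)\wedge_{tmf}tmf_1(3)\bigr)$ and its higher relatives, which requires a K\"unneth/flatness statement over the unknown group $\H_*(tmf)$ --- precisely what you are trying to compute. Note also that descendability of the non-connective map $Tmf\to Tmf_1(3)$ (which does follow from finite \'etaleness of the cover of moduli stacks once $3$ is inverted) does not pass formally to connective covers, and the relative smash powers $tmf_1(3)^{\wedge_{tmf}(s+1)}$ of connective covers are \emph{not} sections over fiber products of moduli stacks, so there is no a priori handle on them. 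The missing ingredient, which is the engine of the proof in \cite{Mat16}, is the Hopkins--Mahowald theorem: there is an $8$-cell complex $DA(1)$ with $\H^*(DA(1))\cong \A(2)//E$, where $E=E(Q_0,Q_1,Q_2)$, together with a $tmf$-module equivalence $tmf\wedge DA(1)\simeq tmf_1(3)$ (itself proved by an obstruction-theoretic construction using $\pi_*(tmf_1(3))\cong\Z_{(2)}[a_1,a_3]$). This one input repairs both holes at once: (i) since $DA(1)$ is a finite spectrum of type $0$, it generates the thick subcategory of all finite $2$-local spectra, so $tmf$ lies in the thick subcategory of $tmf$-modules generated by $tmf_1(3)$, which is exactly descendability; and (ii) it identifies the cosimplicial terms as $tmf_1(3)^{\wedge_{tmf}(s+1)}\simeq tmf_1(3)\wedge DA(1)^{\wedge s}$, whose homology $(\A_*\square_{E_*}\F)\otimes(\A(2)_*\square_{E_*}\F)^{\otimes s}$ is known, turning the $\E_1$-page into an honest, computable cobar-type complex over the finite coalgebra $\A(2)_*\square_{E_*}\F$, to which the Cartan--Eilenberg/cofreeness collapse you sketch can then legitimately be applied. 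Without $DA(1)$ and this equivalence, your argument cannot get started.
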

\noindent
As a consequence, if $X$ is a connective spectrum, then the Adams spectral sequence for $tmf\wedge X$ reads as 
$$\E_2^{s,t} \cong \Ext_{\A_*}^{s,t}(\F, (\A_*\square_{\A(2)_*}\F) \otimes \H_*X)\Longrightarrow \pi_{t-s} (tmf\wedge X).$$
By the change-of-rings isomorphism, the $\E_2$-term of this spectral sequence is isomorphic to $\Ext_{\A(2)_*}^{s,t}(\H_*X)$. Furthermore, the $tmf$-Hurewicz map of $A_1$ induces a map of Adams spectral sequences
$$\xymatrix{ \Ext_{\A_*}^{s,t}(\F,\H_*(A_1))\ar[rr]^{H_*}\ar@{=>}[d]&& \Ext_{\A(2)_*}^{s,t}(\F,\H_*(A_1))\ar@{=>}[d]\\
		\pi_{t-s}(A_1)\ar[rr]^{H_*}&& \pi_{t-s}(tmf\wedge A_1)
,}$$
where the map in the $\E_2$-terms is induced the the natural projection of Hopf algebras $\A_*\rightarrow \A(2)_*$.
To analyse $\Ext_{\A_*}^{s,t}(\F,\H_*(A_1))$ as well as $\Ext_{\A(2)_*}^{s,t}(\F,\H_*(A_1))$, we use the Davis-Mahowald spectral sequence, which is reviewed in the next section.

\subsection{The Davis-Mahowald spectral sequence}
Initially, the Davis-Mahowald spectral sequence was used by Davis and Mahowald in \cite{DM82} to calculate $\Ext$-groups over the subalgebra $\A(2)$. In \cite{Pha18}, we established a slight generalisation of the latter. Let us recall this construction. 
\\\\
Let $A$ be a commutative Hopf algebra over a field $k$ of characteristic $2$. Let $E$ and $P$ be the graded exterior algebra and the polynomial algebra on a $k$-vector space $V$ such that $V$ lives in degree $1$, respectively. Let $E_i$ and $P_i$ denote the subspace of elements of homogeneous degree $i$, respectively. Suppose that $E$ has a structure of a $A$-comodule algebra such that $k\oplus V$ is a sub-$A$-comodule of $E$. Since $P_1$ sits in a short exact sequence 
\begin{equation}\label{ExSq}0\rightarrow k\rightarrow k\oplus E_1\xrightarrow{p} P_1,
\end{equation}
$P_1$ admits a unique structure of $A$-comodule making $p$ a map of $A$-comodules. Then $P$ admits a structure of $A$-comodule algebra by the following lemma.
\begin{Lemma} If $P_1^{\otimes n}$ is equipped with the usual structure of $A$-comodule of a tensor product, then $P_n$ admits a unique structure of $A$-comodule making the multiplication $P_1^{\otimes n}\rightarrow P_n$ a map of $A$-comodules.
\end{Lemma}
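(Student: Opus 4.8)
The plan is to obtain $P_n$ as a quotient comodule of $P_1^{\otimes n}$; the whole point will be that the kernel of the multiplication map is a sub-$A$-comodule. Write $m\colon P_1^{\otimes n}\to P_n$ for the multiplication. Since $P=\mathrm{Sym}(V)$ is the polynomial algebra and $P_1\cong V$ sits in degree one, $P_n=\mathrm{Sym}^n(V)$ is spanned by products of $n$ elements of $P_1$, so $m$ is surjective. This already settles uniqueness: any coaction $\psi_{P_n}$ for which $m$ is a comodule map must satisfy $\psi_{P_n}\circ m=(1\otimes m)\circ\psi_{P_1^{\otimes n}}$, and a surjection is right-cancellable, so $\psi_{P_n}$ is determined. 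It therefore remains to construct the coaction, i.e.\ to show that $K:=\ker m$ is a sub-comodule, for then $P_n=P_1^{\otimes n}/K$ carries the quotient coaction and $m$ is automatically a comodule map.

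First I would describe $K$ explicitly using the permutation action. The symmetric group $S_n$ acts on $P_1^{\otimes n}$ by permuting the tensor factors, and because $P_n=\mathrm{Sym}^n(V)$ is the module of $S_n$-coinvariants of $V^{\otimes n}$, a standard computation in the tensor algebra identifies
$$K=\operatorname{span}\{\,w-\sigma\cdot w : w\in P_1^{\otimes n},\ \sigma\in S_n\,\}=\sum_{\sigma\in S_n}\operatorname{im}(1-\sigma),$$
and it is enough to let $\sigma$ range over the adjacent transpositions, which generate $S_n$. The key step, and the place where the hypothesis that $A$ be \emph{commutative} is genuinely used, is to verify that each $\sigma$ acts on $P_1^{\otimes n}$ as an automorphism of $A$-comodules. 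For the swap $\tau(x\otimes y)=y\otimes x$ of two adjacent factors this amounts to comparing $\psi\circ\tau$ with $(1\otimes\tau)\circ\psi$ on $P_1\otimes P_1$; unwinding the tensor-product coaction, the two sides differ only by the order in which two structure elements of $A$ are multiplied, so commutativity of $A$ makes $\tau$ a comodule map. I expect this commutativity check to be the only real content of the argument; granting it for adjacent transpositions, it propagates to all of $S_n$.

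Finally, with each $1-\sigma$ an endomorphism of the $A$-comodule $P_1^{\otimes n}$, its image is a sub-comodule, and the sum $K=\sum_\sigma\operatorname{im}(1-\sigma)$ is again a sub-comodule. Hence $P_n=P_1^{\otimes n}/K$ inherits the unique coaction making $m$ a map of $A$-comodules, which is precisely the existence-and-uniqueness assertion of the lemma. A short additional check shows this coaction is compatible with the multiplication of $P$, so that $P$ becomes a comodule algebra as intended in the surrounding construction.
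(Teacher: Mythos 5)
Your proof is correct and takes essentially the same route as the paper's: both identify the kernel of the surjection $P_1^{\otimes n}\to P_n$ as the span of the elements $w-\sigma\cdot w$ and use commutativity of $A$ to show this kernel is a sub-comodule, so that $P_n$ inherits the (unique, by surjectivity) quotient coaction. Your write-up merely fills in details the paper leaves implicit, namely the reduction to adjacent transpositions and the observation that each permutation is then a comodule automorphism.
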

\begin{proof} The canonical map $P_1^{\otimes n}\rightarrow P_n$ is surjective and its kernel is spanned by element of the form $y_1\otimes y_2\otimes...\otimes y_n - y_{\sigma_1}\otimes y_{\sigma(2)}\otimes...\otimes y_{\sigma(n)}$ where $\sigma$ is a permutation of the set $\{1, 2, ..., n\}$. Then, since $A$ is commutative, we see that the kernel is stable under the coaction of $A$. The lemma follows.
\end{proof}
We define the following cochain complex $(E\otimes P, d)$ with
\begin{itemize}
  \item[i)] $(E\otimes P)_{-1} = k$
  
  \item[ii)] $(E\otimes P)_{m}= E\otimes P_{m}$ for $m\geq 0$ 
  
  \item[iii)] $d : k=(E\otimes P)_{-1}\rightarrow E = (E\otimes P)_{0}$ being the unit of $E$
  
  \item[iv)] $d(\prod\limits_{j=1}^{n} x_{i_{j}}\otimes z) = \sum\limits_{t=1}^{n} \prod\limits_{j\ne t} x_{i_{j}}\otimes p(x_{i_{t}})z$ where $x_{i_{j}}\in E_{1}$, $z\in P_{m}$ and $p$ is the projection of (\ref{ExSq})
\end{itemize} 
Proposition 2.1.4 of \cite{Pha18} shows that $(E\otimes P, d)$ is a differential graded algebra which is an exact sequence of $A$-comodules. As a consequence, there is a spectral sequence of algebras 
\begin{equation} \label{SSA}
\Ext_A^{s}(k, E\otimes P_n)\Longrightarrow \Ext_{A}^{s+n}(k,k).
\end{equation}
Furthermore, if $M$ is an  $A$-comodule, there is a spectral sequence of modules over (\ref{SSA})
\begin{equation} \label{SSM}
\Ext_A^{s}(k, E\otimes P_n\otimes M)\Longrightarrow \Ext_{A}^{s+n}(k,M).
\end{equation}
The Davis-Mahowald spectral sequence, or DMSS for short, comes to this paper in the following form. Let $A$ and $B$ be commutative graded Hopf algebras over $\F$ together with a map of Hopf algebras $A\rightarrow B.$ Then $A$ can be considered as a left $B$-comodule algebra. The group of primitives $A\square_{B}k$ inherits a structure of $A$-comodule algebra from $A$, i.e., the inclusion of subgroup $A\square_{B}k\rightarrow A$ is a map of $A$-comodule algebras. If it turns out that $E:= A\square_{B}k$ is an exterior algebra, then we are often in a situation to construct the Davis-Mahowald spectral sequence, precisely when $E$ is generated by the $k$-vector space $V$, as an exterior algebra and $k\oplus V$ is a sub $A$-comodule of $e$. In this situation, by the change-of-rings theorem, the $\E_1$-term of the latter is isomorphic to 
$$\E_1^{s,t,n}= \Ext_A^{s,t}(k, E\otimes P_n) \cong \Ext^{s,t}_{B}(k, P_n).$$ This allows us to reduce the problem of computing $\Ext$-groups over $A$ to $\Ext$-groups over $B$, which is often simpler.
\\\\
\textbf{Example.} Consider the natural projection of commutative Hopf algebras $$\A(n)_*\rightarrow \A(n-1)_*.$$ Then we have that $$\A(n)_*\square_{\A(n-1)_*}\F\cong E(\zeta_1^{2^{n}}, \zeta_{2}^{2^{n-1}}, ..., \zeta_{n+1} ),$$ the exterior algebra on $\F\{\zeta_1^{2^{n}}, \zeta_{2}^{2^{n-1}}, ..., \zeta_{n+1}\}$. Let us denote $\zeta_k^{2^{n+1-k}}$ by $x_k$. Using Equation (\ref{diagonal A}), we see that the $\A(n)_*$-comodule structure of $\A(n)_*\square_{\A(n-1)_*}\F$ is given by $$\Delta(x_k) = \sum\limits_{i=0}^{i=k}\zeta_i^{2^{n+1-k}}\otimes x_{k-i}$$ where $x_0=1$ by convention. Thus, the Davis-Mahowald spectral sequence reads as $$\E_1^{s,t,n} = \Ext_{\A(n-1)_*}^{s,t}(P_n)\Longrightarrow \Ext_{\A(n)_*}^{s+n,t}(\F).$$
\begin{Remark} This example is an important tool to perform the calculation of $\Ext_{\A(2)_*}^{*,*}(\H_*(A_1))$ in \cite{Pha18}. We will apply the DMSS for other examples apprearing in the proof of Theorem \ref{structure E2}.
\end{Remark}
\section{The $tmf$-Hurewicz image of $A_1$}
The Hurewicz map $A_1\rightarrow tmf\wedge A_1$ is a map of modules over the ring spectrum $A_1\wedge DA_1$, hence $H_*: \pi_*(A_1)\rightarrow \pi_*(tmf\wedge A_1)$
is a map of modules over $\pi_*(A_1\wedge DA_1)$. We recollect some elements of the latter, important to this work. The induced map in homotopy of the unit $S^0\rightarrow A_1\wedge DA_1$ sends $\nu\in \pi_{3}S^0$ and $\ovk\in \pi_{20}S^0$ to nontrivial elements, denoted by the same names. This is because $\nu$ and $\ovk$ are sent nontrivially to $\pi_*(tmf\wedge A_1)$ via the composite $$S^0\rightarrow A_1\wedge DA_1\rightarrow A_1\xrightarrow{H}tmf\wedge A_1,$$ where the middle map is induced by the projection $DA_1\rightarrow S^0$ on the top cell of  $DA_1$. Recall that $A_1$ has a $v_2^{32}$-self-map $v_2^{32}: \Sigma^{192}A_1\rightarrow A_1$. Its adjoint $S^{192}\rightarrow A_1\wedge DA_1$ represents an element of $\pi_{192}(A_1\wedge DA_1)$, also denoted by $v_2^{32}$.
\\\\
Let $(\overline{\kappa},\nu)$ be the ideal of $\pi_*(S^0)$ generated by $\overline{\kappa}$ and $\nu$. Consider the following commutative diagram
$$\xymatrix{ \pi_*(A_1)\ar[rr]^{H_*}\ar[d]&& \pi_*(tmf\wedge A_1)\ar[d]\\
			\pi_*(A_1)/(\overline{\kappa},\nu)\ar[rr]&& \pi_*(tmf\wedge A_1)/(\overline{\kappa},\nu).
}$$
We see that the upper horizontal map is surjective if and only if the lower is surjective. In fact, this is an easy consequence of $\pi_*(A_1)$ being bounded below.
To prove that the lower map is surjective we can proceed as follows. For all $\overline{x}\in \pi_*(tmf\wedge A_1)/(\overline{\kappa},\nu)$, first, lift $\overline{x}$ to an element $x\in \pi_*(tmf\wedge A_1)$, then, find a class that detects $x$ in the ASS for $tmf\wedge A_1$ and finally, show that class lifts to a permanent cycle in the ASS for $A_1$. 
\\\\
The first and the second steps are rather straightforward from our analysis of the ASS for $tmf\wedge A_1$ in \cite{Pha18}. We recall here appropriate information. 
Let $\F[\nu, g, w_2]/(\nu^3, g\nu)$ be the subalgebra of $\Ext_{\A(2)_*}^{*,*}(\F)$ generated by $\nu$, $g$, $w_2$ with $|\nu| = (1,4)$, $|g| = (4,24)$ and $|w_2| = (8,56)$. The classes $\nu$, $g$ lift to classes of the same name in $\Ext_{\A_*}^{4,24}(\F)$ and converge to $\nu$, $\ovk$, respectively, in the ASS for $S^0$.
\begin{Proposition}[\cite{Pha18}, Proposition 3.2.5]\label{AdamsA_1} As a module over $\F[\nu, g, w_2]/(\nu^3, g\nu)$, the group $\Ext_{\A(2)_*}^{s,t}(\H_*(A_1))$ is a direct sum of cyclic modules generated by the following classes with the respective annihilator ideal
$$\begin{array}{llllllll}
e[0,0]& e[1,5] & e[1,6]& e[2,11]&e[3,15] & e[3,17] & e[4,21] & e[4,23]\\
(0) & (\nu^2) & (0) & (\nu^2)& (\nu^2) & (0) & (\nu^2) & (0)\\
\\
e[6,30]& e[6,32]& e[7,36] & e[7,38] & e[8,42] & e[9,47] & e[9,48] & e[10,53]\\
(\nu)& (\nu) & (\nu) & (\nu) & (\nu) & (\nu) & (\nu) & (\nu),
\end{array}$$
where $e[s,t]$ is the unique non-trivial class of $\Ext_{\A(2)_*}^{s,t+s}(\H_*(A_1))$.
\end{Proposition}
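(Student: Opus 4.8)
The plan is to compute $\Ext_{\A(2)_*}^{*,*}(\H_*(A_1))$ by means of the Davis--Mahowald spectral sequence attached to the projection $\A(2)_*\to\A(1)_*$ of the Example above, taken with coefficients in the comodule $M=\H_*(A_1)$. The single fact driving the whole computation is that $\H^*(A_1)\cong\A(1)$ is \emph{free} of rank one over $\A(1)$; dually, $\H_*(A_1)\cong\A(1)_*$ is cofree (injective) as an $\A(1)_*$-comodule. Since this isomorphism holds for every homotopy type of $A_1$, the resulting $\E_2$-term, and hence the stated answer, is the same for all versions.

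With these coefficients the DMSS reads
\[
\E_1^{s,t,n}=\Ext_{\A(1)_*}^{s,t}(P_n\otimes\H_*(A_1))\Longrightarrow\Ext_{\A(2)_*}^{s+n,t}(\H_*(A_1)),
\]
where $P=\F[x_1,x_2,x_3]$ with $x_1=\zeta_1^4$, $x_2=\zeta_2^2$, $x_3=\zeta_3$ of internal degrees $4,6,7$. Because $\H_*(A_1)\cong\A(1)_*$ is injective over $\A(1)_*$, the comodule $P_n\otimes\H_*(A_1)\cong P_n\otimes\A(1)_*$ is injective for every $n$, so $\Ext_{\A(1)_*}^{s,t}(P_n\otimes\H_*(A_1))$ vanishes for $s>0$ and equals the underlying space $(P_n)_t$ for $s=0$. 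Thus the $\E_1$-page is concentrated on the line $s=0$, where it is the polynomial algebra $\F[x_1,x_2,x_3]$ graded by polynomial degree $n$. The higher differentials $d_r$ ($r\geq 2$) strictly lower $s$ and so vanish, while $d_1$ preserves $s=0$ and raises $n$ by one; hence the spectral sequence collapses at $\E_2=\E_\infty$. As the filtration degree $n$ equals the total homological degree on this line there are no filtration-jumping extensions, and we obtain
\[
\Ext_{\A(2)_*}^{n,t}(\H_*(A_1))\cong H^n\big((\F[x_1,x_2,x_3],\,d_1)\big)_t,
\]
the cohomology of a Koszul-type complex whose differential $d_1$ is read off from the coaction $\Delta(x_k)=\sum_i\zeta_i^{2^{3-k}}\otimes x_{k-i}$ together with the comodule structure of $M$.

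It remains to compute this cohomology as a module. Since the entire group lives in filtration $s=0$, any element of $\Ext_{\A(2)_*}(\F)$ acting nontrivially must itself be detected on the $s=0$ line of the corresponding spectral sequence for $\F$; in particular the generators $\nu,g,w_2$ are represented there by the polynomial classes $x_1$, $x_2^4$, $x_3^8$ (of bidegrees $(1,4)$, $(4,24)$, $(8,56)$), and therefore act on $H^*(\F[x_1,x_2,x_3],d_1)$ simply by multiplication. One then computes the complex degree by degree: $w_2=[x_3^8]$ survives with no relation, so the whole group is free over $\F[w_2]$ and no annihilator involves $w_2$; the relations $\nu^3=0$ and $g\nu=0$ record that $x_1^3$ and $x_1x_2^4$ become boundaries; and the sixteen classes $e[s,t]$ emerge as the module generators, with the ideals $(0),(\nu),(\nu^2)$ recording exactly which of their $\nu$- and $g$-multiples persist in cohomology.

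The main obstacle is precisely this last, finite but delicate, computation: making $d_1$ explicit from the coaction, determining the cohomology of $(\F[x_1,x_2,x_3],d_1)$ as a bigraded group, and checking that exactly the listed sixteen generators occur with the stated annihilators. In particular one must understand how $d_1$ mixes the three Koszul generators --- why $x_1,x_2,x_3$ all survive in polynomial degree one whereas $x_1^3$ and $x_1x_2^4$ do not --- and confirm that the multiplicative action computed inside the complex is the restriction of the genuine $\Ext_{\A(2)_*}(\F)$-action. The remaining ingredients, namely the collapse of the spectral sequence and the absence of hidden extensions, are formal consequences of the injectivity of $\H_*(A_1)$ over $\A(1)_*$.
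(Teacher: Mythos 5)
Your formal reduction is sound, and it is in fact the method behind the result: this paper does not reprove the proposition but quotes it from \cite{Pha18}, and the Remark at the end of Section 1.2 says explicitly that the Davis--Mahowald spectral sequence for $\A(2)_*\rightarrow \A(1)_*$ is the tool used there. Your preliminary steps are all correct: $\H_*(A_1)$ is a cofree $\A(1)_*$-comodule, so the $\E_1$-term is concentrated on the line $s=0$, where it is $\F[x_1,x_2,x_3]$ with $|x_i|=4,6,7$; the spectral sequence collapses at $\E_2$; purity of the filtration rules out extension problems; and the $\F[\nu,g,w_2]$-module structure is read off from multiplication by $x_1$, $x_2^4$, $x_3^8$. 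The decisive gap is that after this reduction you stop: you never determine $d_1$ and never compute $H^*(\F[x_1,x_2,x_3],d_1)$, and that computation \emph{is} the proposition --- the sixteen generators and the annihilator ideals $(0)$, $(\nu)$, $(\nu^2)$ are exactly the output to be established, not a loose end. Moreover this step is less routine than your phrase ``Koszul-type complex'' suggests: $d_1$ cannot be an $\F$-derivation of $\F[x_1,x_2,x_3]$, because a derivation kills all squares in characteristic $2$, so $x_2^2$ and $x_3^2$ (bidegrees $(s,t-s)=(2,10)$ and $(2,12)$) would survive, whereas the stated answer has no classes in those bidegrees; consistency forces, for instance, $d_1(x_2^2)=x_1^3$ and $d_1(x_3^2)=x_1^2x_2$, since these are the only targets in the right internal degrees. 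Deriving this twisted differential from the $\A(2)_*$-coaction and grinding out its cohomology degree by degree is the actual content of the proof, and none of it appears in your proposal.

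There is a second, subtler gap: your claim that the answer is automatically the same for all four versions of $A_1$ is a non sequitur. What is version-independent is the $\A(1)$-module structure of $\H^*(A_1)$, hence the $\E_1$-page. But $d_1$ is built from the Koszul differential together with the identification of $\A(2)_*$-primitives, i.e., it uses the full $\A(2)_*$-comodule structure of $\H_*(A_1)$ --- and that structure is precisely what distinguishes the four homotopy types $A_1[ij]$, which differ by the action of $Sq^4\in\A(2)$. So cofreeness over $\A(1)_*$ alone does not imply that the $\E_2$-page is independent of the version; one must check that the resulting differential (or at least its cohomology, together with the module structure) agrees in all four cases. That verification is part of the computation you have deferred.
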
 
\noindent
According to Theorem 5.3.22 of \cite{Pha18}, $\pi_*(tmf\wedge A_1)$ is $\Delta^8$-periodic. More precisely, multiplication by $\Delta^8\in \pi_{192}(tmf)$ induces an isomorphism $\pi_k(tmf\wedge A_1)\rightarrow \pi_{k+192}(tmf\wedge A_1)$ for $k\geq 0$. The same property holds for multiplication by $v_2^{32}$.
\begin{Proposition}\label{multi v2} Multiplication by $v_2^{32}\in \pi_{192}(A_1\wedge DA_1)$ induces an isomorphism $\pi_k(tmf\wedge A_1)\rightarrow \pi_{k+192}(tmf\wedge A_1)$ for $k\geq 0$.
\end{Proposition}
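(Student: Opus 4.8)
The plan is to deduce the statement from the analogous, already-established fact after $K(2)$-localisation, using that $v_2^{32}$ is by construction a $v_2$-self-map. The first step is a bookkeeping reduction: multiplication by $v_2^{32}\in\pi_{192}(A_1\wedge DA_1)$ on $\pi_*(tmf\wedge A_1)$ is induced by the map of spectra $tmf\wedge v_2^{32}\colon \Sigma^{192}(tmf\wedge A_1)\to tmf\wedge A_1$. This is immediate from the definition of $v_2^{32}$ as the adjoint of the self-map $\Sigma^{192}A_1\to A_1$ together with the description of the $A_1\wedge DA_1$-module structure on $tmf\wedge A_1$ (evaluation $DA_1\wedge A_1\to S^0$ smashed with $tmf\wedge A_1$). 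So it suffices to understand $\pi_*(tmf\wedge v_2^{32})$.

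Next I would show that $tmf\wedge v_2^{32}$ is a $K(2)_*$-equivalence. Since $v_2^{32}$ is a $v_2$-self-map of the type $2$ spectrum $A_1$, the map $K(2)\wedge v_2^{32}$ is an equivalence; smashing this equivalence with $tmf$ shows that $K(2)\wedge tmf\wedge v_2^{32}$ is an equivalence, i.e.\ $tmf\wedge v_2^{32}$ is a $K(2)_*$-isomorphism. Consequently $L_{K(2)}(tmf\wedge v_2^{32})$ is an equivalence of $K(2)$-local spectra, and therefore multiplication by $v_2^{32}$ is an isomorphism on $\pi_*\bigl(L_{K(2)}(tmf\wedge A_1)\bigr)$ in every degree.

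Finally I would transport this back along the localisation map $\lambda\colon tmf\wedge A_1\to L_{K(2)}(tmf\wedge A_1)$ using Theorem 5.3.22. Naturality of $K(2)$-localisation applied to the self-map $tmf\wedge v_2^{32}$ gives a commuting square identifying $\lambda(v_2^{32}x)$ with $L_{K(2)}(v_2^{32})(\lambda x)$. For $k\geq 0$ both the source stem $k$ and the target stem $k+192\geq 192$ lie in the non-negative range where $\lambda$ induces an isomorphism on homotopy; since the bottom map of the square is an isomorphism, the top map $v_2^{32}\colon \pi_k(tmf\wedge A_1)\to\pi_{k+192}(tmf\wedge A_1)$ is an isomorphism as well. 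This is precisely the argument that yields the $\Delta^8$-periodicity recorded just above, with $\Delta^8\in\pi_{192}(tmf)$ (which also becomes invertible after $K(2)$-localisation) replaced by $v_2^{32}$.

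I expect the proof to be essentially formal once the first reduction and the $K(2)_*$-equivalence claim are in place; the only genuine point is to make sure these two facts are stated correctly, together with the compatibility of $v_2^{32}$-multiplication with $\lambda$. As an alternative, one could stay inside the Adams spectral sequence: using Proposition \ref{AdamsA_1} one checks that both $tmf\wedge v_2^{32}$ and multiplication by $\Delta^8$ are detected on $\E_2=\Ext_{\A(2)_*}(\H_*(A_1))$ by multiplication by $w_2^4$, so their difference strictly raises Adams filtration; since the spectral sequence for $tmf\wedge A_1$ has a horizontal vanishing line this difference is nilpotent in each stem, whence $v_2^{32}=\Delta^8\circ(\mathrm{id}+\text{nilpotent})$ is an isomorphism. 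The main obstacle in this second route is pinning down the leading term of $v_2^{32}$ on $\E_2$, which is exactly what the $K(2)$-local argument lets us bypass, and is the reason I would adopt the localisation approach.
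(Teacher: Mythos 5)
Your proof is correct, but it takes a genuinely different route from the paper's. The paper never leaves the world of $tmf$-modules and the Adams spectral sequence: using $F_{tmf}(tmf\wedge A_1,tmf\wedge A_1)\simeq tmf\wedge A_1\wedge DA_1$, it reduces the claim to producing an $n$ with $(\Delta^8)^n=(v_2^{32})^n$ in $\pi_{192n}(tmf\wedge A_1\wedge DA_1)$; it then invokes the construction of \cite{BEM17} to see that $v_2^{32}$ and $\Delta^8$ are detected by the same class in the ASS, writes $\Delta^8-v_2^{32}=\Delta^8x$ with $x$ of positive Adams filtration, deduces that $x$ is nilpotent from the vanishing line (Theorem \ref{Vanishing line}), and concludes via the binomial formula that $(v_2^{32})^{2^k}=(\Delta^8)^{2^k}$ for $k$ large; $\Delta^8$-periodicity then finishes. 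Your main route replaces all of this by the observation that a $v_2$-self-map is, by definition, a $K(2)_*$-equivalence, hence becomes an equivalence after $L_{K(2)}$, and then transports the resulting isomorphism back along the localisation map via Theorem 5.3.22 of \cite{Pha18} and a three-out-of-four argument in the naturality square; this is shorter, purely formal, and needs neither the vanishing line nor the detection input from \cite{BEM17}. Note that both arguments lean on Theorem 5.3.22 of \cite{Pha18}, so your proof introduces no extra circularity. What the paper's argument buys, and yours does not, is the intermediate identity $(\Delta^8)^n=(v_2^{32})^n$ itself: this by-product is cited later, in the proof of Theorem \ref{edge-proof}, to show that $[(v_2^{32})^{-1}](tmf\wedge A_1)\rightarrow L_{K(2)}(tmf\wedge A_1)$ is an equivalence, so if your proof were substituted into the paper, that later step would need an easy supplementary argument (again $K(2)$-local: a $K(2)_*$-equivalence self-map makes the telescope $K(2)$-locally equivalent to the spectrum itself). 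Finally, the ``alternative'' ASS argument you sketch at the end is essentially the paper's actual proof, with the common detecting class supplied by \cite{BEM17} and the nilpotence supplied by the vanishing line, so your instinct that the localisation argument lets you bypass the delicate identification of the leading term of $v_2^{32}$ is exactly right.
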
 
\begin{proof} To simplify the notation, we denote by $v_2^{32}$ and $\Delta^8$ the maps $Id_{tmf}\wedge v_2^{32}$ and $\Delta^8\wedge Id_{A_1}$ from $\Sigma^{192}tmf\wedge A_1$ to $tmf\wedge A_1$, respectively. By using the natural equivalence, $$F_{tmf}(tmf\wedge A_1, tmf\wedge A_1)\simeq tmf\wedge A_1\wedge DA_1,$$ where $F_{tmf}(-,-)$ denotes the function spectrum in the category of $tmf$-modules, it suffices to prove that there is a positive integer $n$ such that $(\Delta^8)^n = (v_2^{32})^{n}\in \pi_{192n}(tmf\wedge A_1\wedge DA_1).$
Since $\Delta^8$ induces an isomorphism $\pi_*(tmf\wedge A_1)\rightarrow \pi_{*+192}(tmf\wedge A_1)$ for $*\geq 0$, multiplication by $\Delta^8$ induces an isomorphism
\begin{equation}\label{iso Delta8}
\pi_*(tmf\wedge A_1\wedge DA_1)\rightarrow \pi_{*+192}(tmf\wedge A_1\wedge DA_1) \ \ \mbox{for}\  *\geq 0.
\end{equation}
By the construction in \cite{BEM17}, a $v_2^{32}$-self map of $A_1$ is detected, in the $\E_2$-term of the ASS for $A_1\wedge DA_1$, by a class that is sent to a class detecting $\Delta^8$ in the ASS for $tmf\wedge A_1\wedge DA_1.$ It means that the difference $\Delta^8-v_2^{32}$ is detected in an Adams filtration greater than $32$, the Adams filtration of $\Delta^8$ and of $v_2^{32}$. Together with the isomorphism (\ref{iso Delta8}), the difference $\Delta^8-v_2^{32}$ is equal to $\Delta^8x$ for some element $x\in \pi_0(tmf\wedge A_1\wedge DA_1)$ detected in a positive filtration of the ASS. As a consequence, $x$ is nilpotent, as the ASS for $tmf\wedge A_1\wedge DA_1$ has a vanishing line parallel to that of the ASS for $tmf\wedge A_1$, see Theorem \ref{Vanishing line}. Furthermore, $\Delta^8-v_2^{32}$ has finite order and $\Delta^8$ is in the center of $\pi_*(tmf\wedge A_1\wedge DA_1)$. Therefore, by using the binomial formula, we see that $(\Delta^8+ v_2^{32}-\Delta^8)^{2^k}$ is equal to $(\Delta^8)^{2^k}$ for $k$ large enough.
\end{proof}
\noindent
As an immediate consequence of the above proposition, multiplication by $v_2^{32}$ induces an isomorphism $\pi_{k}(tmf\wedge A_1)/(\nu,\ovk)\rightarrow \pi_{k+192}(tmf\wedge A_1)/(\nu,\ovk)$ for $k\geq 0$.
Moreover, it follows from the proof of Theorem 5.3.22 of \cite{Pha18} that, we can identify a set of generators of $\pi_*(tmf\wedge A_1)/(\ovk,\nu)$ for $0\leq *< 192$ as $\Z_2$-modules. Therefore, the latter generates $\pi_*(tmf\wedge A_1)/(\ovk,\nu)$ as a $\Z_2[v_2^{32}]$-module.
We give in Table (\ref{List M}) and Table (\ref{List N}), a list of generators of the non self-dual versions $A_1[00]$ and $A_1[11]$ and in Table (\ref{List P}) and Table (\ref{List Q}), a list of generators of the self-dual versions $A_1[01]$ and $A_1[10]$, respectively. This distinction is because the proof that they lift to permanent cycles in the ASS for $A_1$ are different, see Proposition \ref{petit stem}, \ref{non-self dual}, \ref{self dual}. We denote by $M$, $N$, $P$, $Q$ the set of generators listed in Table (\ref{List M}), Table (\ref{List N}), Table (\ref{List P}), Table (\ref{List Q}), respectively. In these tables, the pairs of integers indicate the bidegree $(t,s)$ of the corresponding generators living in $\Ext^{s,t+s}$ and we switch to the notation $e_{t}$ instead of $e[s,t]$ to denote a generator in bidegree $(t,s)$.

\begin{table}[H]
$$\begin{array}{llllllll}
 (0,0)&(5,1)&(6,1)&(11,2)&(15,3)&(17,3)&(21,4)&(23,4)\\
 e_0&e_5& e_6&e_{11}&e_{15}&e_{17}&e_{21}&e_{23}\\
 \\
(30,6)&(32,6)&(36,7)&(38,7)&(42,8)&(47,9)&(48,9)&(53,10)\\
e_{30}&e_{32}&e_{36}&e_{38}&e_{42}&e_{47}&e_{48}&e_{53}\\
\\
(48,8)&(53,9)&(54,9)&(59,10)&(63,11)&(65,11)&(69,12)&(71,12)\\
 w_2e_0&w_2e_5& w_2e_6&w_2e_{11}&w_2e_{15}&w_2e_{17}&w_2e_{21}&w_2e_{23}\\
 \\
(78,14)&(80,14)&(84,15)&(86,15)&(90,16)&(95,17)&(96,17)&(101,18)\\
w_2e_{30}&w_2e_{32}&w_2e_{36}&w_2e_{38}&w_2e_{42}&w_2e_{47}&w_2e_{48}&w_2e_{53}\\
\end{array}$$
\caption{List M. Generators of $\pi_*(tmf\wedge A_1)/(\overline{\kappa},\nu)$ as $\F[\Delta^8]$-module for the non-self dual versions $A_1[00]$ and $A_1[11]$. }
\label{List M}
\end{table}
\begin{table}[H]
$$\begin{array}{llllllll}
 (99,17)&(104,18)&(105,18)&(110,19)&(114,20)&(116,20)\\
\nu w_2^2 e_0&\nu w_2^2e_5&\nu w_2^2e_6&\nu w_2^2e_{11}&\nu w_2^2e_{15}&\nu w_2^2e_{17}\\
\\
(120,21)&(122,21)&(147,25)&(152,26)&(153,26)\\
\nu w_2^2e_{21}&\nu w_2^2e_{23}&\nu w_2^3e_0&\nu w_2^3e_5&\nu w_2^3e_6\\
\\
(158,27)&(162,28)&(164,28)&(168,29)&(170,29)\\
\nu w_2^3e_{11}&\nu w_2^3e_{15}&\nu w_2^3e_{17}&\nu w_2^3e_{21}&\nu w_2^3e_{23}
\end{array} $$
\caption{List N. Generators of $\pi_*(tmf\wedge A_1)/(\overline{\kappa},\nu)$ as $\F[\Delta^8]$-module for the non-self dual versions $A_1[00]$ and $A_1[11]$.}
\label{List N}
\end{table}
\begin{table}[H]
$$\begin{array}{llllllll}
 (0,0)&(5,1)&(6,1)&(11,2)&(15,3)&(17,3)&(21,4)&(23,4)\\
 e_0&e_5& e_6&e_{11}&e_{15}&e_{17}&e_{21}&e_{23}\\
 \\
(30,6)&(32,6)&(36,7)&(38,7)&(42,8)&(47,9)&(48,9)\\
e_{30}&e_{32}&e_{36}&e_{38}&e_{42}&e_{47}&e_{48}\\
\\
(48,8)&(53,10)&(53,9)&(54,9)&(59,10)&(63,11)&(65,11)\\
 w_2e_0&e_{53}&w_2e_5& w_2e_6&w_2e_{11}&w_2e_{15}&w_2e_{17}\\
 \\
(69,12)&(74,13)&(78,14)&(80,14)&(84,15)&(90,16)&(95,17)\\
w_2e_{21}&\nu w_2e_{23}&w_2e_{30}&w_2e_{32}&w_2e_{36}&w_2e_{42}&w_2e_{47}\\
\end{array}$$
\caption{List P. Generators of $\pi_*(tmf\wedge A_1)/(\overline{\kappa},\nu)$ as $\F[\Delta^8]$-module for the self dual versions $A_1[01]$ and $A_1[10]$.}
\label{List P}
\end{table}
\begin{table}[H]
$$\begin{array}{llllllll}
 (96,16)&(101,17)&(105,18)&(110,19)&(111,19)&(116,20)\\
w_2^2 e_0&w_2^2e_5&\nu w_2^2e_6&\nu w_2^2e_{11}&w_2^2e_{15}&\nu w_2^2e_{17}\\
\\
(120,21)&(122,21)&(126,22)&(147,25)&(152,26)&(153,26)\\
\nu w_2^2e_{21}&\nu w_2^2e_{23}&w_2^2e_{30}&\nu w_2^3e_0&\nu w_2^3e_5&\nu w_2^3e_6\\
\\
(158,27)&(162,28)&(164,28)&(168,29)&(170,29)\\
\nu w_2^3e_{11}&\nu w_2^3e_{15}&\nu w_2^3e_{17}&\nu w_2^3e_{21}&\nu w_2^3e_{23}
\end{array} $$
\caption{List Q. Generators of $\pi_*(tmf\wedge A_1)/(\overline{\kappa},\nu)$ as $\F[\Delta^8]$-module for the self dual versions $A_1[01]$ and $A_1[10]$.}
\label{List Q}
\end{table}

 We now proceed to prove that all classes in these tables lift to permanent cycles in the Adams SS for $A_1$. There are two main steps to this end. In the first place, we show that the induced map on the $\E_2$-terms of the Hurewicz map $$H_*: \Ext_{\A_*}^{*,*}(\H_*A_1)\rightarrow \Ext_{\A(2)_*}^{*,*}(\H_*A_1)$$ is surjective. This implies, in particular, that the classes in $M \cup N$ (respectively $P\cup Q$) lift to the $\E_2$-term of the ASS for $A_1$. In the second place, we show that the $\Ext_{\A_*}^{*,*}(\H_*A_1)$ has a certain structure (see Theorem \ref{structure E2 A_1}) that allows us to rule out non-trivial differentials on lifts of the classes in $M\cup N$ (respectively $P\cup Q$) in the ASS for $A_1$. 
\subsection{The algebraic $tmf$-Hurewicz homomorphism}
\begin{Theorem}\phantomsection\label{Vanishing line}(Vanishing line) For $n\geq 0$ or $n=\infty$, $\Ext_{\A(n)_*}^{s,t} (\H_*A_1)$ has vanishing line $t-s < f(s)$, where $f(s) = 5s- 4$ if $s\leq 6$ and $f(s) = 5s$ if $s>6$, i.e.,
$$\Ext_{\A(n)_*}^{s,t} (\H_*A_1) =0\ \mbox{if}\ t-s < f(s).$$
\end{Theorem}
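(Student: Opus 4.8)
The plan is to prove the vanishing line separately for each finite $n$ by induction, using the Davis--Mahowald spectral sequence of the Example as the engine, and then to obtain the case $n=\infty$ by a stabilisation argument; throughout I use the identification $\H_*A_1\cong\A(1)_*$ of $\A(1)_*$-comodules. The small cases are immediate: for $n=1$ the comodule $\H_*A_1\cong\A(1)_*$ is cofree, so $\Ext_{\A(1)_*}^{s,t}(\H_*A_1)$ is $\F$ in bidegree $(0,0)$ and $0$ otherwise, while $\A(1)$ is free over $\A(0)=E(Sq^1)$, so $\Ext_{\A(0)_*}(\H_*A_1)$ is concentrated in filtration $0$; in both cases the line holds vacuously. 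The base case $n=2$ is read off Proposition \ref{AdamsA_1}: writing a general element as $\nu^{a}g^{b}w_2^{c}\,e[s_0,t_0]$ and using $|\nu|=(1,4)$, $|g|=(4,24)$, $|w_2|=(8,56)$, one computes $(t-s)-5s=(t_0-5s_0)-2a+8c$. Since $t_0\ge 5s_0$ for every generator in the table, and since $\nu^3=g\nu=0$ confine the only negative contributions $-2a$ to filtrations $s\le 6$, the minimum value $-4$ is attained exactly by $\nu^2 e[0,0]$ in bidegree $(s,t)=(2,8)$. This is precisely the function $f$, which is therefore sharp for $\A(2)_*$.

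For the inductive step $n-1\Rightarrow n$ with $n\ge 3$, I run the Davis--Mahowald spectral sequence
\[
\E_1^{s,t,m}=\Ext_{\A(n-1)_*}^{s,t}(P_m\otimes\H_*A_1)\Longrightarrow \Ext_{\A(n)_*}^{s+m,t}(\H_*A_1),
\]
where $P$ is the polynomial algebra on the exterior generators $x_k=\zeta_k^{2^{n+1-k}}$ of $\A(n)_*\square_{\A(n-1)_*}\F$, with $\min_k|x_k|=|x_1|=2^{n}$. As $\E_\infty$ is a subquotient of $\E_1$, it suffices to bound the $\E_1$-term. The key point is that, $\A(n-1)_*$ being a connected graded Hopf algebra, the coaction on any comodule is the identity modulo terms of strictly lower internal degree; hence the internal-degree filtration of $P_m$ has trivial associated graded, and $P_m\otimes\H_*A_1$ is a finite successive extension of shifts $\Sigma^{d}\H_*A_1$ with $d\ge m\cdot 2^{n}$. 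Applying the inductive hypothesis to each shift yields
\[
\E_1^{s,t,m}=0\qquad\text{for}\qquad t-s<f(s)+m\cdot 2^{n}.
\]

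A nonzero contribution to $\Ext_{\A(n)_*}^{S,T}(\H_*A_1)$ from column $m$ (so $S=s+m$, $T=t$) then forces $T-S\ge f(S-m)+m(2^{n}-1)$, and the line for $\A(n)_*$ follows once one verifies
\[
f(s)+m(2^{n}-1)\ge f(s+m)\qquad(s\ge 0,\ m\ge 1,\ n\ge 3).
\]
Since $f$ has slope $5$ with a single excess step of $4$ across $s=6$, one has $f(s+m)-f(s)\le 5m+4$, whereas $m(2^{n}-1)\ge 7m$; this settles all $m\ge 2$, and for $m=1$ all $n\ge 4$ as well as all $s$ off the jog. The only exceptional triple is $(n,m,s)=(3,1,6)$, which I dispose of by hand: via the shifts above, the relevant groups $\E_1^{6,40,1}$ and $\E_1^{6,41,1}$ reduce to $\Ext_{\A(2)_*}^{6,\ast}(\H_*A_1)$ in stems $\le 26$, all of which vanish by the base case.

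Finally, for $n=\infty$ the projection $\A_*\to\A(n)_*$ is an isomorphism of coalgebras through internal degree $2^{n+1}-1$, the lowest-degree element of the kernel being $\zeta_1^{2^{n+1}}$ in degree $2^{n+1}$. Because $\Ext^{S,T}$ depends on the coalgebra only through degree $T$, for each fixed $(S,T)$ the groups $\Ext_{\A_*}^{S,T}(\H_*A_1)$ and $\Ext_{\A(n)_*}^{S,T}(\H_*A_1)$ agree once $2^{n+1}-1\ge T$, so the uniform line for all finite $n$ passes to $\A_*$. I expect the main obstacle to be the inductive step, where one must control the $\Ext$ of the genuinely nontrivial coefficient comodule $P_m\otimes\H_*A_1$ rather than of $\H_*A_1$ alone; the degree-filtration reduction, combined with the sharp arithmetic of $f$ around its jog at $s=6$, is exactly what allows the uniform bound to close.
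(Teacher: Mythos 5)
Your proof is correct in outline and rests on the same two pillars as the paper's---the explicit $\A(2)_*$-computation of Proposition \ref{AdamsA_1} and a connectivity estimate of at least $8$ per filtration step---but it organizes them genuinely differently. The paper does not induct on $n$: it sets $\Gamma=\A(n)_*\square_{\A(2)_*}\F$ (which makes sense for $n=\infty$ as well), resolves $\F$ by the comodules $\Gamma\otimes\overline{\Gamma}^{\otimes r}$, and obtains a single spectral sequence with $\E_1$-term $\Ext_{\A(2)_*}^{s,t}(\overline{\Gamma}^{\otimes r}\otimes\H_*A_1)$ converging to $\Ext_{\A(n)_*}^{s+r,t}(\H_*A_1)$; since $\overline{\Gamma}$ is $7$-connected, this handles all $n\geq 3$, including $n=\infty$, in one stroke. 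You instead iterate the Davis--Mahowald spectral sequence of the paper's Example up the tower $\A(2)_*, \A(3)_*, \dots$, and then recover $n=\infty$ by a cobar-truncation/colimit argument; that last step is essentially the content of the paper's Approximation Theorem \ref{Approximation}, so nothing is lost, and your route has the advantage of using only machinery the paper has already set up. Both arguments face the same arithmetic subtlety at the jog of $f$: the paper's ``or equivalently'' rewriting of its vanishing range is in fact off by $4$ precisely when a filtration step crosses $s=6$ (its $r=1$, $S=7$ case), a point the paper glosses over, whereas you correctly isolate it as the exceptional triple $(n,m,s)=(3,1,6)$.

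However, your disposal of that exceptional case contains a slip. For $\E_1^{6,41,1}$ the shifts $d\in\{8,12,14,15\}$ produce $\Ext_{\A(2)_*}^{6,41-d}(\H_*A_1)$ in stems $27,23,21,20$, so stem $27$ occurs, not only stems $\leq 26$; and neither stem $26$ (arising from $\E_1^{6,40,1}$) nor stem $27$ is covered by the base-case vanishing line, since at $s=6$ that line only forces vanishing in stems strictly below $26$. What saves the argument is Proposition \ref{AdamsA_1} itself: at filtration $s=6$ the only nonzero classes are $\nu^2e[4,23]$, $e[6,30]$, $ge[2,11]$, $e[6,32]$, in stems $29$ through $32$ (note $\nu^2e[4,21]=0$ because of its annihilator $(\nu^2)$), so $\Ext_{\A(2)_*}^{6,32}(\H_*A_1)=\Ext_{\A(2)_*}^{6,33}(\H_*A_1)=0$ and both exceptional groups vanish. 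Replace ``all of which vanish by the base case'' with this explicit appeal to Proposition \ref{AdamsA_1} and your proof closes.
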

\begin{proof} 
The statement for $n=0,1$ follows from the fact that $\H_*A_1$ is $\A(0)_*$- and $\A(1)_*$-cofree. The statement for $n=2$ follows from the explicit structure of $\Ext_{\A(2)_*}^{s,t}(\H_*A_1)$. Now suppose $n\geq 3$.
Set $\Gamma = \A(n)_*\square_{\A(2)_*}\F$: $\Gamma$ is an $\A(n)_*$-comodule algebra. The unit $\F\rightarrow \A(n)_*\square_{\A(2)_*}\F$ is a map of $\Gamma$-comodules; denote by $\overline{\Gamma}$ the quotient of the latter; so that we have the short exact sequence of $\A(n)_*$-comodules
$$0\rightarrow \overline{\Gamma}^{\otimes r}\rightarrow \Gamma\otimes \overline{\Gamma}^{\otimes {r}}\rightarrow \overline{\Gamma}^{\otimes r+1}\rightarrow 0,$$ for $r\geq 0$. Slicing these together, we get a long exact sequence of $\A(n)_*$-comodules
$$0\rightarrow\F\rightarrow \Gamma\rightarrow \Gamma\otimes\overline{\Gamma}\rightarrow...\rightarrow \Gamma\otimes \overline{\Gamma}^{\otimes r}\rightarrow...$$
which gives rise to a spectral sequence converging to $\Ext_{\A(n)_*}^{*,*}(\H_*A_1)$ with $\mathrm{E}_1$-term isomorphic to $\Ext_{\A(n)_*}^{*,*}( \Gamma\otimes \overline{\Gamma}^{\otimes r}\otimes \H_*A_1)$:
\begin{equation}\phantomsection\label{vanishing SS}\mathrm{E}_1^{s,t,r}=\Ext_{\A(n)_*}^{s,t}( \Gamma\otimes \overline{\Gamma}^{\otimes r}\otimes \H_*A_1)\Longrightarrow \Ext_{\A(n)_*}^{s+r,t}(\F,\H_*A_1).
\end{equation}
By the change-of-rings isomorphism, $$\Ext_{\A(n)_*}^{s,t}( \Gamma\otimes \overline{\Gamma}^{\otimes r}\otimes \H_*A_1)\cong \Ext_{\A(2)_*}^{s,t}( \overline{\Gamma}^{\otimes r}\otimes \H_*A_1).$$ 
We see that $\overline{\Gamma}^{\otimes r}$ is $(8r-1)$-connected because $\overline{\Gamma}$ is $7$-connected. Together with the fact that 
$$\Ext_{\A(2)_*}^{s,t}(\H_*A_1) = 0\ \mbox{if}\ t-s<f(s),$$ we obtain that $$\Ext_{\A(2)_*}^{s,t}(\overline{\Gamma}^{\otimes r}\otimes \H_*A_1) = 0$$
 if $t-s<f(s) + 8r $ or equivalently if $t-(s+r)<f(s+r) + 2r$. We can now conclude by using the spectral sequence (\ref{vanishing SS}).
\end{proof}
\begin{Theorem}\phantomsection\label{Approximation}(Approximation Theorem) Let $m\geq n$ be two non-negative integers or $m=\infty$, the restriction homomorphism $$\Ext_{\A(m)_*}^{s,t}(\H_*A_1)\rightarrow \Ext_{\A(n)_*}^{s,t}(\H_*A_1) $$ is an isomorphism if $t-s< f(s-1)+2^{n+1}-1$ and is an epimorphism if $t-s< f(s)+2^{n+1}$, where $\A(\infty)_*:=\A_*$ and $f(s)$ is as in Theorem \ref{Vanishing line}.
\end{Theorem}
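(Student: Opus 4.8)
The plan is to deduce the comparison from a single short exact sequence of $\A(m)_*$-comodules, invoking the Vanishing Line theorem for the \emph{larger} algebra $\A(m)_*$ rather than the full resolution used in the proof of Theorem \ref{Vanishing line}. Assume $m>n$, the case $m=n$ being trivial. Set $\Gamma=\A(m)_*\square_{\A(n)_*}\F$, an $\A(m)_*$-comodule algebra, and let $\overline{\Gamma}$ be the cokernel of the unit $\F\rightarrow\Gamma$, so that $0\rightarrow\F\rightarrow\Gamma\rightarrow\overline{\Gamma}\rightarrow 0$ is a short exact sequence of $\A(m)_*$-comodules. By a computation like the one in the Example of the previous section, $\Gamma$ is generated by the classes $\zeta_1^{2^{n+1}},\zeta_2^{2^{n}},\dots$ together with $\zeta_{n+2},\dots$; hence $\overline{\Gamma}$ is $(2^{n+1}-1)$-connected, its bottom class $\zeta_1^{2^{n+1}}$ sitting in degree $2^{n+1}$. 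This connectivity is independent of $m$ and persists for $m=\infty$, with $\A(\infty)_*=\A_*$.

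First I would tensor the sequence above with $\H_*A_1$ and form the long exact sequence in $\Ext_{\A(m)_*}$:
$$\cdots\rightarrow \Ext_{\A(m)_*}^{s-1,t}(\overline{\Gamma}\otimes\H_*A_1)\rightarrow\Ext_{\A(m)_*}^{s,t}(\H_*A_1)\xrightarrow{\ \rho\ }\Ext_{\A(m)_*}^{s,t}(\Gamma\otimes\H_*A_1)\rightarrow\Ext_{\A(m)_*}^{s,t}(\overline{\Gamma}\otimes\H_*A_1)\rightarrow\cdots.$$
By the change-of-rings isomorphism $\Ext_{\A(m)_*}^{s,t}(\Gamma\otimes\H_*A_1)\cong\Ext_{\A(n)_*}^{s,t}(\H_*A_1)$, and under this identification $\rho$ is exactly the restriction homomorphism of the statement, being the edge map associated with the adjunction unit $\H_*A_1\rightarrow\Gamma\otimes\H_*A_1$ induced by the projection $\A(m)_*\rightarrow\A(n)_*$. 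Exactness then shows that $\rho$ is surjective in bidegree $(s,t)$ as soon as $\Ext_{\A(m)_*}^{s,t}(\overline{\Gamma}\otimes\H_*A_1)=0$, and is an isomorphism as soon as both $\Ext_{\A(m)_*}^{s,t}(\overline{\Gamma}\otimes\H_*A_1)$ and $\Ext_{\A(m)_*}^{s-1,t}(\overline{\Gamma}\otimes\H_*A_1)$ vanish.

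The crux is therefore a connectivity-shift estimate: I claim $\Ext_{\A(m)_*}^{s,t}(\overline{\Gamma}\otimes\H_*A_1)=0$ for $t-s<f(s)+2^{n+1}$. To prove it, I would filter $\overline{\Gamma}$ by internal degree. Since $\A(m)_*$ is connected, the subspace of elements of degree $\leq d$ is an $\A(m)_*$-subcomodule, and the associated graded pieces are concentrated in a single degree $d\geq 2^{n+1}$, hence are trivial comodules. Tensoring with $\H_*A_1$ turns the degree-$d$ piece into a finite sum of copies of $\Sigma^{d}\H_*A_1$, for which Theorem \ref{Vanishing line} (applied to $\A(m)_*$, with the same function $f$) gives $\Ext_{\A(m)_*}^{s,t}(\Sigma^{d}\H_*A_1)=\Ext_{\A(m)_*}^{s,t-d}(\H_*A_1)=0$ whenever $t-s<f(s)+d$. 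Assembling these vanishing ranges through the spectral sequence of the filtration, and using $d\geq 2^{n+1}$, yields the claim. Feeding this back into the long exact sequence gives surjectivity for $t-s<f(s)+2^{n+1}$ and, since $f(s-1)+2^{n+1}-1<f(s)+2^{n+1}$, an isomorphism for $t-s<f(s-1)+2^{n+1}-1$, which are precisely the asserted ranges. The case $m=\infty$ is identical, using Theorem \ref{Vanishing line} for $\A_*$.

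The main obstacle I anticipate is making the connectivity-shift estimate fully rigorous: one must verify that the degree filtration of $\overline{\Gamma}$ really is a filtration by subcomodules, so that its graded pieces are suspensions of trivial comodules, and that the resulting filtration spectral sequence converges and transports the shifted vanishing line without loss. This last point needs a little care when $m=\infty$, where convergence has to be argued degreewise; fortunately $\overline{\Gamma}$ is bounded below and of finite type, so only finitely many graded pieces contribute in each internal degree. A secondary, more formal point is the identification of $\rho$ with the restriction homomorphism through the change-of-rings isomorphism; once these two ingredients are in place, the remainder is bookkeeping in the long exact sequence. Note that, in contrast to the proof of Theorem \ref{Vanishing line}, no higher tensor powers $\overline{\Gamma}^{\otimes r}$ enter, which is what allows the shifted line to be $f(s)+2^{n+1}$ uniformly in $n$, without the error terms produced by the kink of $f$ at $s=6$.
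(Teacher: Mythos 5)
Your proposal is correct and takes essentially the same route as the paper: the same short exact sequence $\F\rightarrow\Gamma\rightarrow\overline{\Gamma}$ of $\A(m)_*$-comodules, the same identification of the restriction homomorphism through the change-of-rings isomorphism, and the same conclusion drawn from the long exact sequence combined with the vanishing line of Theorem \ref{Vanishing line} applied to $\A(m)_*$ and the connectivity of $\overline{\Gamma}$. The only difference is expository: you make the connectivity-shift vanishing estimate explicit via a filtration of $\overline{\Gamma}$ by internal degree (and correctly place the bottom class $\zeta_1^{2^{n+1}}$ in degree $2^{n+1}$), whereas the paper treats this step as immediate.
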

\begin{proof} Pose $\Gamma = \A(m)_*\square_{\A(n)_*}\F$ and $\overline{\Gamma} = coker (\F\rightarrow \Gamma )$. The restriction homomorphism is the composite $$\Ext_{\A(m)_*}^{s,t}(\H_*A_1)\rightarrow \Ext_{\A(m)_*}^{s,t}( \Gamma\otimes \H_*A_1)\cong \Ext_{\A(n)_*}^{s,t}(\H_*A_1)$$ where the first map is induced by the unit $\F\rightarrow \Gamma$ and the second is the change-of-rings isomorphism.
The short exact sequence of $\A(m)_*$-comodules $\F\rightarrow \Gamma\rightarrow \overline{\Gamma}$ gives rise to a long exact sequence
$$\Ext_{\A(m)_*}^{s-1,t}( \overline{\Gamma}\otimes \H_*A_1)\rightarrow \Ext_{\A(m)_*}^{s,t}(\H_*A_1)\rightarrow \Ext_{\A(n)_*}^{s,t}(\H_*A_1)$$$$\rightarrow \Ext_{\A(m)_*}^{s,t}(\overline{\Gamma}\otimes \H_*A_1)$$
Because $\overline{\Gamma}$ is $2^{n+1}$-connected and $\Ext_{\A(m)_*}^{s,t}(\F,\H_*A_1)$ has the vanishing line $t-s<f(s)$, $$\Ext_{\A(m)_*}^{s,t}( \overline{\Gamma}\otimes \H_*A_1)=0$$ if $t-s< f(s) + 2^{n+1}$, hence the surjectivity of the respective restriction homomorphism;  and  $$\Ext_{\A(m)_*}^{s-1,t}(\overline{\Gamma}\otimes \H_*A_1)=\Ext_{\A(m)_*}^{s,t}( \overline{\Gamma}\otimes \H_*A_1)=0$$ if $t-s< f(s-1) + 2^{n+1}-1$, hence the bijectivity of the respective restriction homomorphism.
\end{proof}
\begin{Corollary} The restriction map $\Ext_{\A_*}^{s,t}(\H_*A_1)\rightarrow \Ext_{\A(2)_*}^{s,t}(\H_*A_1)$ is an epimorphism if $t-s<5s+8$ and is an isomorphism if $t-s<5s+2$.
\end{Corollary}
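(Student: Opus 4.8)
The plan is to read this off directly from the Approximation Theorem (Theorem \ref{Approximation}), since the corollary is simply the case $m=\infty$, $n=2$. First I would set $m=\infty$, so that $\A(m)_*=\A_*$, and $n=2$, so that $2^{n+1}=8$ and $2^{n+1}-1=7$. The restriction map in question is exactly the restriction homomorphism of Theorem \ref{Approximation} for this pair, and the two ranges supplied there become $t-s<f(s)+8$ for an epimorphism and $t-s<f(s-1)+7$ for an isomorphism.

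Next I would substitute the generic (large $s$) value of the function $f$ from Theorem \ref{Vanishing line}. For $s>6$ we have $f(s)=5s$, so the epimorphism range reads $t-s<5s+8$; and for $s\geq 8$ we have $f(s-1)=5(s-1)=5s-5$, so the isomorphism range reads $t-s<(5s-5)+7=5s+2$. In the regime where $f$ is linear, these are precisely the two bounds claimed in the corollary, so there is nothing further to do for the bulk of the chart.

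The one place requiring care, and what I expect to be the main (if minor) obstacle, is the small-$s$ regime where $f(s)=5s-4$ rather than $5s$: there Theorem \ref{Approximation} only gives an epimorphism for $t-s<5s+4$ (for $s\leq 6$) and an isomorphism for $t-s<5s-2$ (for $s\leq 7$), which fall short of the stated $5s+8$ and $5s+2$. The plan is to dispatch these finitely many extra bidegrees by hand using the explicit module structure of $\Ext_{\A(2)_*}^{*,*}(\H_*A_1)$ from Proposition \ref{AdamsA_1}. For the epimorphism one checks that in each additional bidegree with $s\leq 6$ the target group already vanishes, since the generators of Proposition \ref{AdamsA_1} and their $\nu$-, $g$-, $w_2$-multiples do not reach that far above the vanishing line, making surjectivity automatic. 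For the isomorphism one uses that $\A_*$ and $\A(2)_*$ agree through internal degree $7$ (the first comodule generator killed in passing to $\A(2)_*$ is $\zeta_1^8$, in degree $8$), so that in the low internal degrees covered by the small-$s$ gap the source and target $\Ext$ groups coincide and the restriction map is already an isomorphism there.
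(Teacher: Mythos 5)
Your main line is exactly the paper's (implicit) derivation: the corollary is stated there without proof as the specialization $m=\infty$, $n=2$ of Theorem \ref{Approximation}, and your arithmetic is right -- the theorem gives an epimorphism for $t-s<f(s)+8$ and an isomorphism for $t-s<f(s-1)+7$, which become the claimed bounds $5s+8$ and $5s+2$ precisely when $s>6$, respectively $s\geq 8$. You are also right that for small $s$ the substitution falls short of the stated ranges, a point the paper silently glosses over; and your repair of the epimorphism half is correct and complete: for $s\leq 6$ and $5s+4\leq t-s<5s+8$ one checks from Proposition \ref{AdamsA_1} that no generator $e[s,t]$ nor any of its $\nu$-, $g$-, $w_2$-multiples lands in that range, so $\Ext_{\A(2)_*}^{s,t}(\H_*A_1)=0$ there and surjectivity is vacuous.

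The genuine gap is in your repair of the isomorphism half. The region not covered by direct substitution is $\{s\leq 7,\ 5s-2\leq t-s<5s+2\}$, and this is not a region of low internal degree: at $s=7$ it reaches internal degree $t=43$, and already at $s=2$ it contains the nonzero classes $\nu e_5\in\Ext_{\A(2)_*}^{2,10}(\H_*A_1)$, $\nu e_6$ and $e_{11}$, of internal degrees $10$, $11$, $13$. The fact that $\A_*\rightarrow\A(2)_*$ is an isomorphism in degrees $\leq 7$ only yields the conclusion for $t<8$ (because then $\Ext_{\A_*}^{*,t}(\overline{\Gamma}\otimes\H_*A_1)=0$ for $\overline{\Gamma}$ the cokernel of $\F\rightarrow\A_*\square_{\A(2)_*}\F$, which is concentrated in degrees $\geq 8$), i.e.\ essentially for $s\leq 1$; it says nothing at the bidegrees above. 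What must actually be ruled out is a nonzero kernel of the restriction, i.e.\ a nonzero image of the connecting map $\Ext_{\A_*}^{s-1,t}(\overline{\Gamma}\otimes\H_*A_1)\rightarrow\Ext_{\A_*}^{s,t}(\H_*A_1)$, and in this band the group $\Ext_{\A_*}^{s-1,t}(\overline{\Gamma}\otimes\H_*A_1)$ does \emph{not} vanish: through the bottom class $\zeta_1^8$ of $\overline{\Gamma}$ it receives $\Ext_{\A_*}^{s-1,t-8}(\H_*A_1)$, which is nonzero (it surjects onto classes such as $\nu e_0$, $\nu e_5$, \dots). So a vanishing argument of the kind you propose cannot work; one needs an extra step, e.g.\ observing that in this range the connecting map is multiplication by $h_3$ (the extension class of the comodule $\F\{1,\zeta_1^8\}$), that the relevant classes in filtration $s-1$ are all $h_2$-multiples by induction on $s$, and that $h_2h_3=0$ in $\Ext_{\A_*}(\F)$. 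Note finally that nothing downstream in the paper is affected: every later use (lifting the generators $e_i$, and the isomorphism statements at $s\geq 19$) sits inside the ranges $t-s<f(s)+8$ and $t-s<f(s-1)+7$ that the Approximation Theorem covers directly.
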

\begin{Theorem}\label{surjectivity of E2} The restriction map $\Ext_{\A_*}^{*,*}( \H_*(A_1))\rightarrow \Ext_{\A(2)_*}^{*,*}(\H_*(A_1))$ is an epimorphism.
\end{Theorem}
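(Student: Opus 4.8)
The plan is to combine the range statement of the preceding Corollary with the module structure of the target over $\Ext_{\A_*}^{*,*}(\F)$ and a single periodicity operator coming from the $v_2^{32}$-self-map. Writing $d(x) = (\text{stem}) - 5(\text{filtration})$ for a class $x$, the Corollary says exactly that the restriction is an epimorphism in the region $d < 8$ (since $t-s < 5s+8$ is equivalent to $d < 8$). By Proposition \ref{AdamsA_1} every one of the sixteen module generators $e[s,t]$ satisfies $d(e[s,t]) = t - 5s \in \{0,1,2,3\} < 8$, so each lies in the image; and since $\nu$ and $g$ lift to $\Ext_{\A_*}^{*,*}(\F)$ by hypothesis, every class $\nu^a g^b e[s,t]$ lies in the image too. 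Because $d(g\cdot x)=d(x)$, $d(\nu\cdot x)=d(x)-2$ and $d(w_2\cdot x)=d(x)+8$, the only classes left to reach are the $w_2$-power multiples $w_2^c\,\nu^a g^b e[s,t]$ with $c\geq 1$, and since $g$ lifts it suffices to treat the classes $w_2^c\,\nu^a e[s,t]$.

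Next I would turn the self-map into a periodicity operator at the level of $\E_2$-pages. The $v_2^{32}$-self-map of $A_1$ induces a natural operator $V$ on $\Ext_{\A_*}^{*,*}(\H_*A_1)$ raising filtration by $32$ and stem by $192$; by naturality it is compatible with restriction. On $\Ext_{\A(2)_*}^{*,*}(\H_*A_1)$ the induced operator agrees, by the detection statement recalled in the proof of Proposition \ref{multi v2} (the self-map is detected by a class restricting to one detecting $\Delta^8$, whose leading term is $w_2^4$), with multiplication by $w_2^4$ up to terms of strictly higher Adams filtration. For fixed stem these correction terms have larger filtration, hence strictly smaller $d$ than $w_2^4 x$. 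I would then prove that the image is everything by induction on $d$: for $d<8$ this is the Corollary, while for $d\geq 8$ I decompose into monomials $\nu^a g^b w_2^c e[s,t]$, and for each monomial with $c\geq 4$ I set $x=\nu^a g^b w_2^{c-4}e[s,t]$ (so $d(x)=d-32$, in the image by induction); then $w_2^4 x$ equals the restriction of $V$ applied to a lift of $x$, minus a correction term of smaller $d$ which is in the image by induction, so $w_2^4 x$ is in the image.

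This reduces the theorem to a finite list of base cases: the monomials $w_2^c\,\nu^a e[s,t]$ with $c\in\{1,2,3\}$ that are not already covered, i.e. those with $d\geq 8$ (and nonzero, as dictated by the annihilator ideals of Proposition \ref{AdamsA_1}). These live in stems up to roughly $53+3\cdot 48\approx 197$ and filtrations up to about $34$. To dispatch them I would use the explicit additive description of $\Ext_{\A_*}^{*,*}(\H_*A_1)$ in these degrees, obtained from the Davis--Mahowald spectral sequence exactly as in the computation underlying Theorem \ref{structure E2 A_1}, and check case by case that each such target class is hit by the restriction map.

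I expect the genuine obstacle to be precisely this last step. The formal parts — range surjectivity from the Corollary and propagation of the image by the realized $w_2^4$-periodicity, with correction terms absorbed by the induction on $d$ — are clean and require no computation. The difficulty is that the self-map only supplies periodicity of amplitude $w_2^4$, not $w_2$, so the three intermediate levels $c=1,2,3$ cannot be reached by the vanishing-line and approximation formalism and must be verified by an explicit comparison of $\Ext_{\A_*}$ with $\Ext_{\A(2)_*}$ in those specific bidegrees. Some care is also needed, using the annihilator ideals in Proposition \ref{AdamsA_1}, to ensure that exactly the nonzero base classes are being checked and no spurious ones.
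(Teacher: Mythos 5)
Your reduction is correct as far as it goes, but it bottoms out in base cases that are never proved, and those base cases are exactly the content of the theorem. After your formal steps (the Corollary handles the region $d<8$; lifts of $\nu$ and $g$ handle $\nu^ag^be_i$; the periodicity operator handles $w_2^c$ for $c\geq 4$ by induction), you are left with all classes $w_2^c\nu^ag^be_i$ with $c\in\{1,2,3\}$ and $d\geq 8$, for which you propose an ``explicit comparison of $\Ext_{\A_*}$ with $\Ext_{\A(2)_*}$ in those specific bidegrees.'' That comparison is nowhere carried out, and it cannot be extracted from the tools you invoke: no computation of $\Ext_{\A_*}^{*,*}(\H_*A_1)$ in those bidegrees (stems up to roughly $190$) exists in the paper, and proving surjectivity there bidegree-by-bidegree is just the theorem again. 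There is also a flaw in the inductive step you do give: the correction term is $x$ multiplied by the difference between the restriction of the class detecting $v_2^{32}$ and $w_2^4$, and both of those classes live in the single bidegree $(s,t)=(32,224)$ of $\Ext_{\A(2)_*}^{*,*}(\H_*(A_1\wedge DA_1))$. Hence the correction lies in the \emph{same} bidegree as $w_2^4x$ --- same stem and same filtration, so the same $d$, not a strictly smaller one --- and your induction on $d$ cannot absorb it. (You are importing the homotopy-level statement that $\Delta^8-v_2^{32}$ has higher Adams filtration into the $\E_2$-page, where it has no meaning: within one Ext group there is no ``higher filtration.'')

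The paper closes precisely the gap you diagnosed, using an input you did not: Corollary 3.8 of \cite{BEM17} states that the class $w_2$ itself --- not merely $w_2^4$ or a detector of the $v_2^{32}$-self-map --- lifts to $\Ext_{\A_*}^{*,*}(\H_*(A_1\wedge DA_1))$. Since $A_1$ is a module over the ring spectrum $A_1\wedge DA_1$, the restriction map is a map of modules over $\Ext_{\A_*}^{*,*}(\H_*(A_1\wedge DA_1))$, hence over the subalgebra generated by $g$, $\nu$ and the lift of $w_2$. By Proposition \ref{AdamsA_1} the sixteen classes $e_i$ generate the target over this subalgebra, and they all satisfy $t-s<5s+8$, so they lift by the Approximation Theorem; surjectivity follows with no intermediate $w_2$-power cases and no correction terms at all. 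So your closing intuition --- that the genuine obstacle is obtaining $w_2$-periodicity rather than $w_2^4$-periodicity --- is exactly right, but the resolution is the quoted lift of $w_2$ at the level of the endomorphism spectrum, not a case-by-case computation.
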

\begin{proof}  The restriction map $\mathrm{Res}: \Ext_{\A_*}^{*,*}(\H_*(A_1))\rightarrow \Ext_{\A(2)_*}^{*,*}(\H^*(A_1))$ is a map of modules over $\Ext_{\A_*}^{*,*}(\H_*(A_1\wedge DA_1))$; This module structure comes from the fact that $A_1$ is a module over the ring spectrum $A_1\wedge DA_1$. It is proved in \cite{BEM17}, Corollary 3.8 that the class $w_2\in \Ext_{\A(2)_*}^{s,t}(\H_*(A_1\wedge DA_1))$ lifts to $\Ext_{\A_*}^{s,t}(\H_*(A_1\wedge DA_1))$. In particular, the restriction $\mathrm{Res}$ is a map of modules over the sub-algebra $R$ generated by $g,\nu,w_2$. By Proposition \ref{AdamsA_1}, the classes $e_i$ where $$ i\in\{0, 5, 6, 11, 15, 17, 21, 23, 30, 32, 36, 38, 42, 47, 48, 53\} $$ generates $\Ext_{\A(2)_*}^{*,*}( \H^*(A_1)$ as a module over $R$. These classes live in the region $\{t-s<5s+8\}$, hence lift to $\Ext_{\A_*}^{*,*}( \H^*(A_1)$ by Theorem \ref{Approximation}.
\end{proof}
\noindent
This theorem shows that all the classes of $M\cup N$ (respectively $P\cup Q$) lift to $\Ext_{\A_*}^{*,*}(\H_*(A_1))$. 
In the following part, we prove that the latter lift to permanent cycles. 
\subsection{The topological $tmf$-Hurewicz homomorphism}
The key step is the following observation on $\Ext_{\A_*}^{*,*}(\H_*(A_1))$.

\begin{Theorem}\phantomsection\label{structure E2 A_1} The group $\Ext_{\A_*}^{*,*}(\H^*(A_1))$ has the following properties
\begin{itemize}
\item[(i)] All classes of $\Ext_{\A_*}^{s,t}( \H^*A_1)$ in the region $$F = \{s\geq 18, 5s\leq t-s\leq 5s+6\}\cup \{s\geq 27, 5s\leq t-s\leq 5s+14\} $$ are $g$-free and are divisible by $g$.
\item[(ii)] Any class $x $ of $\Ext_{\A_*}^{s,t}( \H^*A_1)$ in the region $$D =\{s\geq 21, 5s\leq t-s \leq 5s+12\}\cup \{s\geq 30, 5s\leq t-s\leq 5s+20\}$$ is weakly $g$-divisible, i.e., there is a class $y$ and a non-negative integer $n$ such that $g^{n+1}y = g^{n}x$. 
\end{itemize}
\end{Theorem}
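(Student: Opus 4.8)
The organizing remark is that multiplication by $g$ sends $(s,t)$ to $(s+4,t+24)$, so it fixes the quantity $(t-s)-5s$ and therefore acts \emph{along} the vanishing line of Theorem~\ref{Vanishing line}; the bands cutting out $F$ and $D$ are exactly the loci where $(t-s)-5s$ lies in a fixed interval, so both regions are stable under multiplication by $g$. My plan is to describe $\Ext_{\A_*}^{*,*}(\H_*A_1)$ as an $\F[g]$-module inside such a band and to read the two assertions off that description.

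The first step is to pass from $\A_*$ to a finite quotient. The restriction maps of Theorem~\ref{Approximation} are $\F[g]$-linear, since $g$ lifts to $\Ext_{\A_*}^{4,24}(\F)$; and taking $n=4$ makes $\Ext_{\A_*}^{s,t}(\H_*A_1)\to\Ext_{\A(4)_*}^{s,t}(\H_*A_1)$ an isomorphism once $t-s<5s+26$, a range containing all of $F$ and $D$. It therefore suffices to prove (i) and (ii) for $\Ext_{\A(4)_*}^{*,*}(\H_*A_1)$, where the spectral sequence (\ref{vanishing SS}) with $\Gamma=\A(4)_*\square_{\A(2)_*}\F$ has only finitely many columns and converges from $E_1^{s,t,r}\cong\Ext_{\A(2)_*}^{s,t}(\overline{\Gamma}^{\otimes r}\otimes\H_*A_1)$.

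The second step exploits connectivity together with the known bottom column. By the estimate from the proof of Theorem~\ref{Vanishing line} (that $\overline{\Gamma}^{\otimes r}$ is $(8r-1)$-connected, and that $\Ext_{\A(2)_*}$ obeys the vanishing line), a nonzero class in the $r$-th column, living at $(t-s)-5s\ge 8r$, contributes to the abutment only at distance $(t-s)-5s\ge 2r$; thus a band of width $c$ sees only the columns $r\le c/2$, so $F$ (widths $6,14$) sees $r\le 3,7$ and $D$ (widths $12,20$) sees $r\le 6,10$. The bottom column is $\Ext_{\A(2)_*}(\H_*A_1)$, which Proposition~\ref{AdamsA_1} shows to be $g$-free and $g$-divisible throughout both regions: the only obstructions are the $\nu$- and $\nu^2$-multiples (which are $g$-torsion, as $g\nu=0$) and the tower bottoms $w_2^b e_i$ (which are not $g$-divisible), and a direct check of bidegrees shows that the filtration thresholds $s\ge 18,21,27,30$ push all of these outside the respective bands. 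I would establish the analogous $g$-freeness and $g$-divisibility, near its own vanishing line, for each higher input $\Ext_{\A(2)_*}(\overline{\Gamma}^{\otimes r}\otimes\H_*A_1)$, following the same pattern.

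The two assertions then follow from a module-over-$\F[g]$ analysis of (\ref{vanishing SS}), in which $g$ is a permanent cycle and all differentials and multiplicative extensions are $g$-linear. The main obstacle, and the source of the $F$/$D$ dichotomy, is that a $g$-linear differential or extension between $g$-free modules can nonetheless create $g$-torsion (a cokernel) or a non-$g$-divisible class (a kernel or a hidden extension) in the abutment. I must therefore show that inside the narrow band $F$ the admissible columns assemble into complete $g$-towers whose bottoms lie below $F$, so that $F$ stays $g$-free and $g$-divisible, giving (i); whereas in the wider band $D$, where more columns interact, such a differential or extension can deposit $g$-torsion or a tower bottom only in the outer annulus $D\setminus F$, which still leaves every class weakly $g$-divisible (a $g$-divisible class, a $g$-torsion class, or a sum of the two all satisfy $g^{n+1}y=g^{n}x$), giving (ii). Verifying, column by column, that the precise widths $6,14$ versus $12,20$ and the thresholds above confine the bad behaviour exactly to $D\setminus F$, together with the uniform $g$-freeness of the higher inputs, is the technical heart of the proof.
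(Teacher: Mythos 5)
Your first move---reducing to $\A(4)_*$ via Theorem \ref{Approximation}, noting that $F$ and $D$ lie in the isomorphism range---is exactly the paper's, and your bookkeeping of which columns of (\ref{vanishing SS}) can reach which band (contribution at $(t-s)-5s\geq 2r$, hence $r\leq 3,7$ for $F$ and $r\leq 6,10$ for $D$) is arithmetically correct, as is your analysis of the bottom column. The genuine gap is in the higher columns. For $r\geq 1$ the input of (\ref{vanishing SS}) is $\Ext_{\A(2)_*}^{*,*}(\overline{\Gamma}^{\otimes r}\otimes\H_*A_1)$, where $\overline{\Gamma}$ is the positive-degree part of $\Gamma=\A(4)_*\square_{\A(2)_*}\F$, a \emph{nontrivial} $\A(2)_*$-comodule of total dimension $511$. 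Proposition \ref{AdamsA_1} describes $\Ext_{\A(2)_*}$ with coefficients $\H_*A_1$ only; it says nothing about these twisted coefficients, and the only property of $\overline{\Gamma}^{\otimes r}$ used in Theorem \ref{Vanishing line} --- its $(8r-1)$-connectivity --- gives vanishing ranges but no $g$-freeness or $g$-divisibility. So the sentence ``I would establish the analogous $g$-freeness and $g$-divisibility \dots following the same pattern'' is not a proof step: it is the original problem over again, now with coefficients for which no description as a module over $\F[\nu,g,w_2]/(\nu^3,g\nu)$ exists or is plausibly computable ``by the same pattern'' (that pattern was an explicit computation for $\H_*A_1$ alone). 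This is precisely the difficulty the paper's proof is engineered to avoid: it factors $\A(2)_*\to\A(4)_*$ into nine Hopf-algebra extensions $B_{i+1}\to B_i$ with $B_{i+1}\square_{B_i}\F=\Lambda(h_i)$ on a single $B_i$-\emph{primitive} generator of degree at least $8$, so that the $\E_1$-term of each Davis--Mahowald spectral sequence (\ref{Davis-Mahowald SS}) is $\bigoplus_{\sigma}\Ext_{B_i}^{*,*}(\H_*A_1)\otimes\F\{h_i^{\sigma}\}$ --- untwisted, shifted copies of the previous stage --- and the inductive hypothesis feeds directly into the next $\E_1$-term with no new $\Ext$ computation.

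A second, related gap: what you call the ``technical heart'' (that $g$-linear differentials and extensions cannot create $g$-torsion or destroy divisibility inside $F$) is deferred rather than carried out, and the induction hypothesis you propose --- the bands $F$ and $D$ with their fixed filtration thresholds --- is too weak to close. Two features of the paper's proof of Theorem \ref{structure E2} show what is actually required: first, to kill a $g$-torsion class $x$ in the good region one uses that the class $y$ with $d_r(y)=gx$ is \emph{weakly} $g$-divisible, and $y$ sits higher in the band coordinate $(t-s)-5s$; so $g$-freeness on one region needs weak divisibility on a strictly larger one, and the two properties must be propagated simultaneously (Steps 1--3 there). Second, a class of column $\sigma$ (or $r$) lying in a band at total filtration $s+\sigma$ is built from a class at the lower filtration $s$, so regions cut out by a fixed threshold like $s\geq 18$ are not preserved under this passage; the paper's regions $S_1,S_2,R_1,R_2$ are enlarged by wedges bounded by the slope-$\frac17$ lines $t-s=7s-40$ and $t-s=7s-30$ exactly so that this step works (see Remark \ref{proof of ii}). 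Repairing your argument would in effect mean filtering $\overline{\Gamma}$ so that its associated graded becomes a trivial comodule and inducting over the filtration stages --- which is, up to reindexing, the paper's proof.
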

\noindent
Because the classes involved in the statement of this theorem live in the region where there is an isomorphism $\Ext_{\A_*}^{s,t}( \H^*(A_1))\cong\Ext_{\A(4)_*}^{s,t}( \H^*(A_1))$ by the Approximation Theorem, it suffices to prove that $\Ext_{\A(4)_*}^{*,*}(\H^*A_1)$ has the required properties.
We prove a stronger statement:
\begin{Theorem}\label{structure E2} The group $\Ext_{\A(4)_*}^{*,*}(\H^*(A_1))$ has the following properties
\begin{itemize}
 \item[(i)] All classes in the region $$S_1=\{20\leq s\leq 27, 5s\leq t-s\leq 7s-40\} \cup \{s\geq 27, 5s\leq t-s\leq 5s+14\}$$ are $g$-free and are divisible by $g$. All classes in the region $$S_2=\{27\leq s\leq 30, 5s+14\leq t-s\leq 7s-40\}\cup$$$$\{s\geq 30, 5s+14\leq t-s\leq 5s+20\},$$ are weakly divisible by $g$.
 
 \item[(ii)] All classes in the region $$T_1=\{15\leq s\leq 18, 5s\leq t-s\leq 7s-30\} \cup \{s\geq 18, 5s\leq t-s\leq 5s+6\}$$ are $g$-free and are divisible by $g$. All classes in the region $$T_2=\{18\leq s\leq 21, 5s+6\leq t-s\leq 7s-30\}\cup$$$$\{s\geq 21, 5s+6\leq t-s\leq 5s+12\},$$ are weakly divisible by $g$.
\end{itemize}
\end{Theorem}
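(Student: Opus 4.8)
The plan is to pin down the $\F[g]$-module structure of $\Ext_{\A(4)_*}(\H^*(A_1))$ in a neighbourhood of the vanishing line by means of the Davis--Mahowald spectral sequence, carrying the action of $g$ along at every stage. The organising observation is that $g$ has bidegree $(s,t)=(4,24)$, so multiplication by $g$ preserves the quantity $\delta:=t-6s$; the vanishing line of Theorem \ref{Vanishing line} is $\delta=0$, and each region in the statement is, away from a triangular truncation, a horizontal band $\{\delta_0\le\delta\le\delta_1\}$ in these coordinates: $S_1$ is $0\le\delta\le14$ and $S_2$ is $14\le\delta\le20$ (for $s$ large), while $T_1$ is $0\le\delta\le6$ and $T_2$ is $6\le\delta\le12$. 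Every assertion is therefore a statement about $\Ext_{\A(4)_*}(\H^*(A_1))$ on a $g$-stable band: ``$g$-free and $g$-divisible'' says the module is $\F[g]$-free there, whereas ``weakly $g$-divisible'' says it becomes free after discarding a bounded amount of $g$-power torsion.

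Concretely, I would run the Davis--Mahowald spectral sequence for the projection $\A(4)_*\to\A(3)_*$, whose primitive part is the exterior algebra $E(\zeta_1^{16},\zeta_2^{8},\zeta_3^{4},\zeta_4^{2},\zeta_5)$, giving
$$\E_1^{s,t,m}=\Ext_{\A(3)_*}^{s,t}(P_m\otimes\H^*(A_1))\Longrightarrow\Ext_{\A(4)_*}^{s+m,t}(\H^*(A_1)),$$
and then iterate once more with the spectral sequence for $\A(3)_*\to\A(2)_*$, so as to express everything in terms of the completely known module $\Ext_{\A(2)_*}(\H^*(A_1))$ of Proposition \ref{AdamsA_1}. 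Since $\H^*(A_1)$ is $\A(1)_*$-cofree the bottom of this tower is transparent, and because each page is a module over $\Ext_{\A(2)_*}(\F)$ while $g$ already lies in the isomorphism range of Theorem \ref{Approximation}, $g$ lifts to a permanent cycle acting $\F[g]$-linearly on every page. The first substantive task is to identify, band by band, the portion of the $\E_1$-page in the relevant $\delta$-range and to check that there it is $\F[g]$-free, generated by products of the Davis--Mahowald polynomial generators with the $R$-module generators $e_i$ of Proposition \ref{AdamsA_1}; the triangular cut-offs $\delta\le2s-40$ and $\delta\le2s-30$ (the slope-$7$ lines $t-s=7s-40$ and $t-s=7s-30$) then reflect the fact that for small $s$ the relevant $g$-towers have not yet fully formed, the generators entering along a line of slope $7$.

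The last, and hardest, task is to show that neither the differentials nor the module extensions disturb these conclusions on the stated sub-bands. Because the spectral sequence is one of $\F[g]$-modules and $g$ is a permanent cycle, every differential is $g$-linear, so it can neither create $g$-torsion nor spoil $g$-divisibility except near the foot of a $g$-tower. In the inner bands $S_1$ and $T_1$ (small $\delta$) I expect a dimension count against both the slope-$5$ and slope-$7$ lines to leave no room for a $g$-linear differential or hidden extension to introduce torsion or an indivisible class, giving the strong, $\F[g]$-free conclusion. In the outer bands $S_2$ and $T_2$ a $g$-linear differential or extension originating just below the slope-$7$ line can instead deposit a bounded amount of $g$-power torsion at the bottom of a tower, which is exactly what degrades honest $g$-divisibility to the weak form, the offending torsion being annihilated by a bounded power of $g$. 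I expect the main obstacle to be precisely this step: one must describe the $\E_1$-page sharply enough in each band to exclude (for $S_1,T_1$) or merely to bound (for $S_2,T_2$) the $g$-linear differentials and extensions, and to confirm that the only failures of genuine $g$-divisibility are the bounded-torsion ones permitted by the weak statement. This is where essentially all of the computation lives, and where the precise numerology of the offsets $6,12,14,20$ and of the two slope-$7$ lines is forced.
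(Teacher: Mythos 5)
Your overall frame---reducing to $\Ext_{\A(2)_*}^{*,*}(\H_*A_1)$ via Davis--Mahowald spectral sequences and working on $g$-stable bands along the vanishing line (your $\delta=t-6s$ coordinates are exactly right)---matches the paper's strategy, but your specific route breaks down at the first step. For the projection $\A(4)_*\to\A(3)_*$ (and likewise $\A(3)_*\to\A(2)_*$) the algebra of primitives is an exterior algebra on \emph{several} generators, so the Davis--Mahowald $\E_1$-term is $\Ext_{\A(3)_*}^{s,t}(P_m\otimes\H_*A_1)$ where $P_m$ is the degree-$m$ part of a polynomial algebra carrying a \emph{nontrivial} $\A(3)_*$-comodule structure (cf.\ the paper's Example: $\Delta(x_k)=\sum_i\zeta_i^{2^{n+1-k}}\otimes x_{k-i}$). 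Consequently the $\E_1$-page is not ``$\F[g]$-free, generated by products of the polynomial generators with the $e_i$''; that description would hold only if $P_m$ were a trivial comodule, and repairing it amounts to computing $\Ext$ of all these comodules, a substantial calculation. The paper avoids this entirely by refining your two-step tower into \emph{nine} Hopf-algebra extensions $\Lambda(h_i)=B_{i+1}\square_{B_i}\F\rightarrow B_{i+1}\rightarrow B_i$, $B_0=\A(2)_*$, $B_9=\A(4)_*$, each adjoining a single primitive generator $h_i$ of degree at least $8$ (successively $\zeta_4,\zeta_3^2,\zeta_2^4,\zeta_1^8,\zeta_5,\zeta_4^2,\zeta_3^4,\zeta_2^8,\zeta_1^{16}$); then each $\E_1$-term is literally $\bigoplus_\sigma\Ext_{B_i}^{s,t-|h_i|\sigma}(\H_*A_1)\otimes\F\{h_i^\sigma\}$ and inherits the inductive hypotheses for free.

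More seriously, the core of the proof---showing that differentials and extensions cannot create $g$-torsion or destroy (weak) divisibility---is precisely the part you leave open (``where essentially all of the computation lives''), and your expectation that it needs a sharp, band-by-band description of the $\E_1$-page is what the paper shows to be unnecessary. The paper runs a purely formal double induction: assuming $\Ext_{B_i}$ satisfies (i) on $S_1,S_2$, one proves by induction on $r$ that every page $\E_r$ satisfies the analogous properties on regions $R_1,R_2$ (written in the total degree $s+\sigma$), using only $g$-linearity of $d_r$ and the geometry of the regions. For instance, if $[x]\in R_1$ with $g[x]=0$, then $gx=d_r(y)$ with $y\in R_1\cup R_2$; weak divisibility gives $g^{n+1}z=g^ny$, hence $g^{n+1}d_r(z)=g^{n+1}x$, and $g$-freeness of $R_1$ forces $d_r(z)=x$, so $[x]=0$. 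Note this is exactly the point where your heuristic ``a $g$-linear differential cannot create torsion except at the foot of a tower'' is insufficient: torsion creation is excluded not by $g$-linearity but by the weak-divisibility hypothesis in the region where the sources $y$ live, which is why the outer bands $S_2,T_2$, the offsets $14,20$ (resp.\ $6,12$), and the slope-$7$ cutoffs are built into the induction from the start rather than extracted from a dimension count at the end. Finally, the passage from $\E_\infty$ to the abutment is handled by observing that since $|h_i|\geq 8$, only finitely many Davis--Mahowald filtration levels meet the band $t-s\leq 5s+20$, so the properties pass through the (finite) filtration. Without this inductive mechanism, or the one-generator-at-a-time reduction that makes it run, your proposal remains a plan whose hardest step is still open.
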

\begin{figure}[h!]
\begin{center}
	\begin{tikzpicture}
		\node (a) at (-1,0) {$s=20$};
		\node (b) at (-1, 3){$s=30$};
		\node (c) at (-1,2){$s=27$};
		\node[rotate=27] at (2.5,1.5) {$t-s =5s$};
		\node[rotate =18] at (6,1.5) {$t-s =7s-40$};
		\draw[-] (0,0)--(6,3);
		\draw[-] (0,0)--(10,3);
		\draw[-] (4,2)--(8/3+4,2);
		\draw[-] (8/3+4,2)--(6+8/3,3);
		\draw[-] (6,3)--(10,3); 
		\draw[fill=red] (0,0)--(8/3+4,2)--(4,2);
		\draw[fill=red] (8/3+4,2)--(4,2)--(6,3)--(6+8/3,3);
		\draw[fill=red] (6,3)--(6+8/3,3)--(8+8/3,4)--(8,4);
		\draw[fill=blue] (4+8/3,2)--(10,3)--(6+8/3,3);
		\draw[fill=blue](10,3)--(6+8/3,3)--(8+8/3,4)--(12,4);
	\end{tikzpicture}
	\caption{The region in red is associated to $S_1$ or $R_1$, the blue to $S_2$ or $R_2$.}
\end{center}
\end{figure}
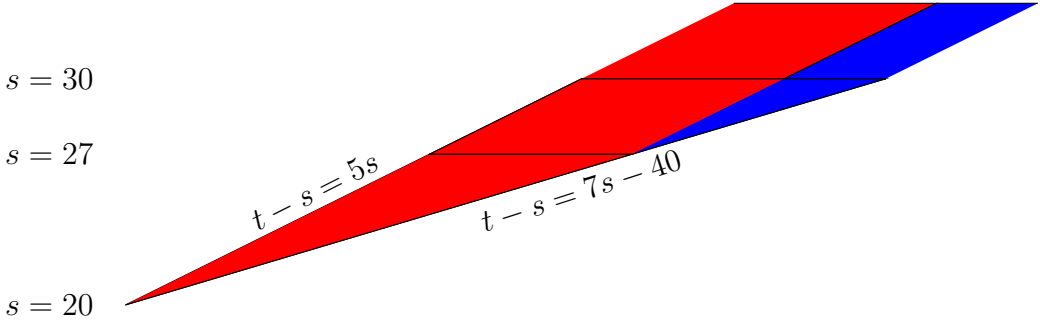
\noindent
Before proving this theorem, let us explain the strategy of the proof. Observe that there is a sequence of extensions of commutative Hopf algebras $$B_{i+1}\square_{B_{i}}\F\rightarrow B_{i+1}\rightarrow B_{i}\ \mbox{for}\ 0\leq i\leq 8$$ in which each $B_{i+1}\square_{B_{i}}\F$ is isomorphic to an exterior algebra $\Lambda(h_i)$ on one generator $h_i$ of degree at least $8$ and $B_0 = \A(2)_*$, $B_{9} = \A(4)_*$.  We can then deduce information on $\Ext_{\A(4)_*}^{*,*}(\H_*A_1)$ from $\Ext_{\A(2)_*}^{*,*}(\H_*A_1)$ by a sequence of Davis-Mahowald spectral sequences
$$\E_1^{s,t,\sigma}=	\underset{\sigma\geq 0}{\oplus}\Ext_{B_i}^{s,t-|h_i|\sigma}(\H_*A_1\otimes \F\{h_i^{\sigma}\})\Longrightarrow  \Ext_{B_{i+1}}^{s+\sigma,t}(\H_*A_1).$$
By the calculation of $\Ext_{\A(2)_*}^{*,*}(\H_*A_1)$, we see that the classes in the region $S_1$ and $S_2$ of the latter have the desired properties. Using this as the base case, we prove by induction that each $\Ext_{B_{i+1}}^{s+\sigma,t}(\H_*A_1)$ has the desired properties. To this end, we first prove, by induction on $r$, that the $\E_r$-term of the Davis-Mahowald spectral sequence has similar properties in the appropriate regions and then make sure that extensions cannot prevent the target of the spectral sequence from having the desired properties, where the fact that the degree of each $h_i$ is at least $8$ becomes crucial.
 
\begin{proof} We have that $$\A(4)_* = \F[\zeta_1,\zeta_2,\zeta_3,\zeta_4,\zeta_5]/(\zeta_1^{32},\zeta_2^{16},\zeta_3^{8},\zeta_4^{4},\zeta_5^{2})$$
$$\A(2)_* = \F[\zeta_1,\zeta_2,\zeta_3]/(\zeta_1^{8},\zeta_2^{4},\zeta_3^{2}).$$
From this, we can construct a sequence of maps of commutative Hopf algebras $(B_{i+1}\rightarrow B_i)$ with $0\leq i\leq 9$, $B_0 = A(2)_{*}$ and $B_9 = A(4)_{*}$ such that for each $i$, $B_{i+1}\square_{B_{i}}\F = \Lambda(h_i)$ is an exterior algebra on one generator $h_i$ of degree at least $8$. Informally, we start with $B_0 = A(2)_*$ and successively "add" $\zeta_4, \zeta_3^2, \zeta_2^4, \zeta_1^8, \zeta_5, \zeta_4^2, \zeta_3^4, \zeta_2^8, \zeta_1^{16}$. For example,
$$B_1 = \F[\zeta_1,\zeta_2,\zeta_3,\zeta_4]/(\zeta_1^{8},\zeta_2^{4},\zeta_3^{2},\zeta_4^2).$$
We will prove by induction on $i$ that $\Ext_{B_i}^{*,*}( \H_*A_1)$ has the property $(i)$. The proof of $(ii)$ works similarly, see Remark \ref{proof of ii}. First, we can directly check that $$\Ext_{B_0}^{*,*}(\H_*A_1)= \Ext_{\A(2)_*}^{*,*}(\H_*A_1)$$ verifies $(i)$, by inspecting its structure shown in Proposition \ref{AdamsA_1}. Suppose that $\Ext_{B_i}^{*,*}(\H_*A_1)$ verifies the properties $(i)$.
Consider the Davis-Mahowald spectral sequence
\begin{equation} \phantomsection\label{Davis-Mahowald SS}
	\E_1^{s,t,\sigma}=	\underset{\sigma\geq 0}{\oplus}\Ext_{B_i}^{s,t}(\H_*A_1\otimes \F\{h_i^{\sigma}\})\Longrightarrow  \Ext_{B_{i+1}}^{s+\sigma,t}( \H_*A_1)
\end{equation}
and the differential $d_r$ goes from $\E_r^{s,t,\sigma}$ to $\E_r^{s-r+1,t,\sigma+r}$.  Since $h_i$ is a $B_i$-primitive, we have that $$\E_1^{s,t,\sigma}=	\underset{\sigma\geq 0}{\oplus}\Ext_{B_i}^{s,t-d\sigma}(\H_*A_1)\otimes \F\{h_i^{\sigma}\}$$
where $d = |h_i|$.
We will prove by induction on $r\geq 1$ that each $\mathrm{E}_r^{s,t,\sigma}$-term of the Davis-Mahowald SS (\ref{Davis-Mahowald SS}) has the following properties \\

\begin{itemize}
\item[(a)] All classes in the region 
$$R_1 = \{20\leq s+\sigma\leq 27, 5(s+\sigma)\leq t-s-\sigma\leq 7(s+\sigma)-40\}$$ $$\cup\{s+\sigma\geq 27, 5(s+\sigma)\leq t-s-\sigma\leq 5(s+\sigma)+14\}$$ 
are $g$-free and are divisible by $g$. 
\item[(b)] All classes in the region 
$$R_2 = \{27\leq s+\sigma\leq 30, 5(s+\sigma)+14\leq t-s-\sigma\leq 7(s+\sigma)-40\}$$$$\cup\{s+\sigma\geq 30, 5(s+\sigma)+14\leq t-s-\sigma\leq 5(s+\sigma)+20\}$$ are weakly divisible by $g$.\\
\end{itemize}
\noindent
A similar proof as of Theorem \ref{Vanishing line} shows that $\Ext_{B_i}^{s,t}(\H_*A_1)$ has the same vanishing line, and so
\begin{align}\label{vanishing line E1}
\E_1^{s,t,\sigma}=0\
\mbox{if}\ s+\sigma >6, t-(s+\sigma)<5(s+\sigma)\ \mbox{or}\\ \nonumber
s+\sigma\leq 6, t-(s+\sigma)<5(s+\sigma)-4.
\end{align}
The $\E_1^{s,t,\sigma}$-term is spanned by classes $x\otimes h_i^{\sigma}$ with $x\in \Ext_{B_{i}}^{s, t-d\sigma} (\H_*A_1)$. For degree reasons ($d=|h_i|\geq 8$) and Equation (\ref{vanishing line E1}), a class $x\otimes h_i^{\sigma}$ living in $R_1$ and $R_2$ is non-trivial only if $x$ lies in $S_1$ and $S_1\cup S_2$, respectively. Then together with the induction hypothesis, it is straightforward to see that the $\mathrm{E}_1$-term of the spectral sequence (\ref{Davis-Mahowald SS}) has the properties $(a)$ and $(b)$. Suppose that the $\mathrm{E}_r$-term of (\ref{Davis-Mahowald SS}) has those properties. Let $x\in \E_r^{s,t,\sigma}$ represent a class $[x]$ of $\E_{r+1}^{s,t,\sigma}$.

\textbf{Step 1.} Suppose $[x]$ lives in $R_1$ and $[x]$ is $g$-torsion. Because $R_1$ is stable by multiplication by $g$, we can assume that $g[x]=0$. Then there exists $y\in \E_r^{s+4+r-1, t+24, \sigma-r}$ such that $d_r(y) = gx$. By an inspection on degrees, we see that $y$ belongs to the region $R_1\cup R_2$. By the induction hypothesis, $y$ is weakly divisible by $g$, i.e., there is a $z$ and an integer $n$ such that $g^{n+1} z = g^n y$. It follows that 
$$g^{n+1}d_r(z) = d_r(g^{n+1}z) = d_r(g^ny) = g^{n} d_r(y) = g^ngx= g^{n+1}x.$$ 
However, $d_r(z)-x$ lies in $R_1$ which consists only of $g$-free classes, hence $d_r(z)=x$, and so $[x]=0$. Therefore, all classes in $R_1$ of $\E_{r+1}$ are $g$-free.

\textbf{Step 2.} Suppose $[x]$ lies in $R_1$. By the induction hypothesis, there exists $y\in \E_r^{s-4,t-24,\sigma}$ such that $gy=x$. We claim that $y$ is a $d_r$-cycle. We have that $$gd_r(y)= d_r(gy)=d_r(x) = 0.$$ Moreover $d_r(y)\in E_r^{s-r-3, t-24,\sigma+r}$ living in $R_1$, hence is $g$-free. We conclude that $d_r(y) = 0$. Thus, $[x]$ is divisible by $g$.

Step 1 and Step 2 show that the $\E_{r+1}$-term has the property $(a)$.

\textbf{Step 3.} Now suppose that $[x]$ belongs to the region $R_2$. Then $x$ is weakly divisible by $g$, i.e., there is a class $z\in \E_r^{s-4,t-24,\sigma}$ and an integer $n$ such that $g^{n+1} z = g^{n}x.$ We claim that $z$ is a $d_r$-cycle. Since $x$ is a $d_r$-cycle, we have $$g^{n+1}d_r(z) = d_r(g^{n+1}z) = d_r(g^{n}x) = g^{n}d_r(x) = 0.$$
Moreover, $d_r(z)\in \E_r^{s-4-r+1,t-24, \sigma+r}$ which belongs to $R_1$, hence $d_r(z)$ is $g$-free, and so $d_r(z)=0$. Therefore, we obtain that $g^{n+1}[z] = g^{n}[x],$ hence the $\mathrm{E}_{r+1}$-term has the property $(b)$.

\textbf{Step 4.} It is now straightforward to see that the $\mathrm{E}_{\infty}$-term also has the properties $(a)$ and $(b)$. To finish the proof, we will show that the target of the spectral sequence (\ref{Davis-Mahowald SS}) has the property $(i)$. Let $$...\subset F_{\sigma}\subset F_{\sigma-1}\subset ...\subset F_1\subset F_0=\Ext_{B_{i+1}}^{*,*}(\F,\H_*A_1)$$ be the filtration of $\Ext_{B_{i+1}}^{*,*}(\H_*A_1)$ associated to the Davis-Mahowald spectral sequence. A class belongs to $F_{\sigma}$ only if it is represented in the $\E_1$-term by a class of the form $x\otimes h_i^\sigma$ where $x \in \Ext_{B_{i}}^{s, t} ( \H_*A_1)$ - here $t-s\geq 5s - 4$ because of Equation \ref{vanishing line E1}. Such a class has bidegree $(s+\sigma, t+d\sigma)$, and so has the topological degree $t-s+(d-1)\sigma$. Because $d\geq 8$, the latter exceeds $5(s+\sigma) +20$ for $\sigma$ sufficiently large. However, any class in $S_1\cup S_2$ has bidegree $(t,s)$ satisfying $ t-s\leq  5s + 20$. This means that there is an integer $m$ such that all classes of $S_1\cup S_2$ belongs to $F_0\backslash F_{m}\cup \{0\}$. From this, it is straightforward to verify that the properties $(a)$ and $(b)$ of the $\E_\infty$-term implies the property $(i)$ of $\Ext_{B_{i+1}}^{*,*}(\F,\H_*A_1)$.
\end{proof}
\begin{Remark}\label{proof of ii} The proof of $(i)$ uses a double induction. What makes both base cases work is the fact that $\Ext_{\A(2)_*}^{s,t}(\H_*A_1)$ has the required properties and that the slope of each $h_i$ is lower than $\frac{1}{7}$ which is exactly the slope of the lower line limiting the region in question. What makes the inductive step work is self-explained by the choice of the regions: relevant classes lie in the relevant regions. What makes the target of the DMSS have the required properties is that the slope of $h_i$ is lower then the slope of the vanishing line. The regions $T_1$ and $T_2$ are chosen to have all of these features, hence the proof of $(ii)$ is similar to that of $(i)$.
\end{Remark}
\noindent
We need the following lemma on the $\E_2$-term of the Adams spectral sequence for $S^0$, which necessitates a calculation of the $\Ext$ group up to stem 43, see \cite{Tan70}, Theorem 4.42.
\begin{Lemma} The class $\nu$ is annihilated by $g^2$, so is $g$-torsion in the $\E_2$-term of the ASS for $S^0$. 
\end{Lemma}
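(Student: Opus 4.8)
The plan is to reduce the statement to a single multiplicative relation in the cohomology of the Steenrod algebra and then extract that relation from Tangora's computation. Since $|\nu|=(1,4)$ and $|g|=(4,24)$, the product $g^2\nu$ lives in bidegree $(s,t)=(9,52)$, i.e. in the $43$-stem at Adams filtration $9$; this is precisely the bidegree sitting at the very top of the range covered by Tangora's calculation, and it explains why knowledge of $\Ext_{\A_*}^{*,*}(\F)$ through stem $43$ is exactly what is needed. The second assertion of the lemma is then immediate: once $g^2\nu=0$ is established, $\nu$ is annihilated by a power of $g$ and is therefore $g$-torsion. So the entire content is the relation $h_2g^2=0$ in $\Ext_{\A_*}^{9,52}(\F)$, where $h_2=\nu$.

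First I would record, again from the tables, that $g\nu=h_2g$ is itself \emph{nonzero} in $\Ext_{\A_*}^{5,28}(\F)$ (the $23$-stem). This is the reason the lemma is phrased with $g^2$ and not $g$: the relation we are proving is genuinely $h_2g^2=0$ rather than the stronger $h_2g=0$. It is worth emphasizing that $h_2g=0$ \emph{does} hold after restriction along $\A_*\to\A(2)_*$, since in $\Ext_{\A(2)_*}^{*,*}(\F)$ one has the relation $g\nu=0$ coming from the subalgebra $\F[\nu,g,w_2]/(\nu^3,g\nu)$; thus $h_2g$ lies in the kernel of the restriction map, consistent with its being nonzero over the full Steenrod algebra. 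With this in hand, the remaining step is to locate the classes $h_2=\nu$, $g$ and $g^2$ in Tangora's multiplication data (Tan70, Theorem 4.42) and read off that the monomial $h_2g^2$ in bidegree $(9,52)$ is the zero class.

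The main obstacle is the bookkeeping in this tabular step rather than any conceptual difficulty. Stem $43$ sits at the boundary of the computed range, filtration $9$ carries several classes, and one must identify the generators $g$ and $h_2$ unambiguously with Tangora's notation and then verify that $h_2g^2$ is genuinely $0$ and not one of the nonzero classes in that bidegree. If one wished to avoid leaning on the published multiplication tables, an alternative would be to realize $g$ (or $g^2$) through a Massey product and to force the vanishing of $h_2g^2$ by a shuffling argument together with the elementary relations $h_2h_3=0$ and $h_2^4=0$; but this route requires establishing the relevant Massey-product descriptions and bracket indeterminacies, so the direct appeal to Tangora's calculation is cleaner and is the one I would carry out.
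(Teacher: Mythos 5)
Your proposal is correct and matches the paper's own treatment: the paper gives no argument beyond citing Tangora's computation of $\Ext_{\A_*}^{*,*}(\F)$ through stem $43$, which is exactly your reduction to the relation $h_2g^2=0$ in bidegree $(s,t)=(9,52)$ read off from those tables. Your side remarks (that $h_2g\neq 0$ over $\A_*$, while $g\nu=0$ already holds over $\A(2)_*$) are also accurate and explain correctly why the lemma requires $g^2$ rather than $g$.
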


\begin{Theorem}\label{surj_Hurw} The induced map in homotopy of the Hurewicz map $A_1\rightarrow tmf\wedge A_1$ is surjective.
\end{Theorem}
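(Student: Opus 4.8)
The plan is to reduce the surjectivity of $H_*$ to the lifting of finitely many explicit classes and then to kill every possible Adams differential on those lifts. Since $\pi_*(A_1)$ is bounded below, $H_*$ is surjective if and only if the induced map $\pi_*(A_1)/(\overline{\kappa},\nu)\to \pi_*(tmf\wedge A_1)/(\overline{\kappa},\nu)$ is surjective. The Hurewicz map is a map of $\pi_*(A_1\wedge DA_1)$-modules, so $H_*$ commutes with multiplication by $v_2^{32}\in \pi_{192}(A_1\wedge DA_1)$; by Proposition \ref{multi v2} and the remark following it, $v_2^{32}$ acts invertibly on the target modulo $(\overline{\kappa},\nu)$, so it suffices to lift to $\pi_*(A_1)$ the finitely many $\F[\Delta^8]$-module generators of Tables \ref{List M}--\ref{List Q}, all of which lie in stems below $192$. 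By Theorem \ref{surjectivity of E2} the $\Ext_{\A(2)_*}$-class detecting each such generator lifts to a class $\tilde e\in \Ext_{\A_*}^{*,*}(\H_*A_1)$; it then remains to show that each $\tilde e$ is a permanent cycle in the ASS for $A_1$, for then it detects an element of $\pi_*(A_1)$ whose Hurewicz image equals the chosen generator up to higher Adams filtration, and an induction on Adams filtration completes the argument.

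To rule out the differentials $d_r(\tilde e)=0$ ($r\geq 2$) I would organise the generators by their distance $(t-s)-5s$ from the vanishing line of Theorem \ref{Vanishing line}. The target of $d_r$ sits in stem $t-s-1$ and filtration $s+r$, hence at distance $(t-s)-5s-1-5r$; for a generator within bounded distance of the vanishing line this target is forced above the line, where $\Ext_{\A_*}$ vanishes, so all but finitely many low values of $r$ are excluded outright, and the smallest stems are handled by direct identification of the classes (Proposition \ref{petit stem}). The decisive input for the high-filtration families $\nu w_2^2e_i$ and $\nu w_2^3 e_i$ is the combination of two facts: first, by the Lemma preceding this theorem $\nu$ is annihilated by $g^2$ in $\Ext_{\A_*}(\F)$, so each of these generators is $g^2$-torsion; second, every differential $d_r$ ($r\geq 2$) out of such a generator lands, by the degree count just made, inside the region $F$ of Theorem \ref{structure E2 A_1}, where all classes are $g$-free. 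Since $g$ detects the permanent cycle $\overline{\kappa}$, multiplication by $g^2$ commutes with $d_r$, so $g^2 d_r(\tilde e)=d_r(g^2\tilde e)=0$, and a $g^2$-torsion class in a $g$-free region must vanish; hence $d_r(\tilde e)=0$.

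The remaining generators that are neither close to the vanishing line nor $\nu$-divisible — chiefly the $w_2 e_i$ of List M and the $w_2^2 e_i$ of List Q — are treated with the $g$-divisibility statements of Theorem \ref{structure E2 A_1}: writing such a class as $g\tilde y$ (part (i)), or invoking a weak relation $g^{n+1}\tilde y=g^n\tilde e$ (part (ii)), and using $g$-freeness of the region containing the differential target lets one propagate the vanishing of $d_r$ downward in filtration, exactly as in Steps 1--4 of the proof of Theorem \ref{structure E2}. I expect the main obstacle to be precisely these intermediate classes, where position alone does not exclude a small differential and a finer case analysis of the admissible targets is unavoidable; here the treatments of the non-self-dual versions $A_1[00],A_1[11]$ (using $M\cup N$, Proposition \ref{non-self dual}) and of the self-dual versions $A_1[01],A_1[10]$ (using $P\cup Q$ and Spanier--Whitehead self-duality, Proposition \ref{self dual}) genuinely diverge. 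A secondary but necessary point is bookkeeping: one must check that the chosen lifts $\tilde e$ are compatible with multiplication by $g$ and $w_2$, so that both the divisibility arguments and the propagation of lifted generators across every $192$-periodic block by $v_2^{32}$ are legitimate.
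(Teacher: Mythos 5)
Your overall route coincides with the paper's: reduce, via the $\pi_*(A_1\wedge DA_1)$-module structure over $\nu,\overline{\kappa},v_2^{32}$, to lifting the finitely many generators of Tables \ref{List M}--\ref{List Q}; lift them to $\Ext_{\A_*}^{*,*}(\H_*A_1)$ by Theorem \ref{surjectivity of E2}; and rule out $d_2,d_3,d_4$, higher differentials being excluded by the vanishing line. The first genuine gap is in your treatment of the $\nu$-divisible families $\nu w_2^2e_i$, $\nu w_2^3e_i$: you kill $d_r$ for \emph{all} $r\geq 2$ by citing the $g$-freeness of the region $F$ in Theorem \ref{structure E2 A_1}. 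That theorem is a statement about the $\E_2$-term only. The targets of $d_3$ and $d_4$ live in the pages $\E_3$ and $\E_4$, which are subquotients of $\E_2$, and quotienting by images of earlier differentials can create $g$-torsion in a region that was $g$-free at $\E_2$. The paper closes exactly this hole with Lemma \ref{free E3} and Lemma \ref{free E4}, which establish $g$-freeness of the relevant bidegrees at the $\E_3$- and $\E_4$-pages; their proofs need the weak $g$-divisibility statement (part (ii) of Theorem \ref{structure E2 A_1}) applied to the classes that could support differentials into those regions. You gesture at this propagation technique, but you deploy it on other classes, so as written your argument for $d_3$ and $d_4$ on the classes of $N$ (and on the $\nu$-divisible part of $Q$) is incomplete.

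The second, more serious gap concerns the self-dual versions $A_1[01]$, $A_1[10]$. The four generators $w_2^2e_0, w_2^2e_5, w_2^2e_{15}, w_2^2e_{30}$ of $Q$ are neither $\nu$-divisible (so they are not $g$-torsion) nor $g$-divisible, and they lie outside the regions $F$ and $D$ of Theorem \ref{structure E2 A_1} (for instance $w_2^2e_0$ sits in $(t-s,s)=(96,16)$, below the filtration bounds $s\geq 18$ and $s\geq 21$ defining those regions), so your plan of writing such a class as $g\tilde y$, or invoking a weak relation $g^{n+1}\tilde y=g^n\tilde e$, has nothing to rest on. The paper's actual mechanism is different and requires external input absent from your proposal: it imports the computed differentials $d_2(w_2e_{53})=g^5e_0$, $d_2(w_2e_{38})=g^4e_5$, $d_2(w_2e_{48})=g^4e_{15}$, $d_2(w_2e_{23})=g^2e_{30}$ from the ASS for $tmf\wedge A_1$ (Theorem 4.0.3 of \cite{Pha18}) into the ASS for $A_1$ through the isomorphism region on the line $t-s=5s$, then uses the fact that $w_2^2$ is a $d_2$-cycle in the ASS for $A_1\wedge DA_1$ (\cite{BEM17}) and the Leibniz rule to obtain $d_2(w_2^3e_{53})=g^5w_2^2e_0$, etc. This truncates the $g$-towers on the four classes: their $d_2$-differentials are then $g$-torsion, hence zero since the targets are $g$-free at $\E_2$; the classes themselves become $g$-torsion at $\E_3$; the potential $d_3$-targets are shown to be already killed at $\E_3$ by the imported $d_2$'s; and $d_4$ lands above the vanishing line. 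Without this the self-dual case does not close. (A minor misdirection in your sorting: the classes $w_2e_i$ of List M need no $g$-divisibility argument at all — the vanishing line allows only $d_2$ on them, landing in the isomorphism region, so the comparison argument of Proposition \ref{petit stem} already covers all of $M$ and $P$.)
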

\begin{proof} The map $H_*: \pi_*(A_1)\rightarrow \pi_*(tmf\wedge A_1)$ is a map of $\pi_*(A_1\wedge DA_1)$-modules. In particular, it is a map of modules over the subalgebra $R$ of $\pi_*(A_1\wedge DA_1)$ generated by $\nu, \overline{\kappa}, v_2^{32}$. Therefore, we only need to prove that a set of generator of $\pi_{*}(tmf\wedge A_1)$ as a $R$-module belongs to the image of $H$.
\\\\
Because of Theorem \ref{surjectivity of E2}, we can choose a lift of classes of $M\cup N$ (respectively $P\cup Q$) to $\Ext_{\A_*}^{*,*}(\H_*A_1)$ such that classes which are divisible by $\nu$  lift to classes which are divisible by $\nu$. We fix such a choice of lifting and call them also $M$ and $N$ (respectively, $P$ and $Q$). Let us prove that all classes of $M\cup N$ (respectively, $P\cup Q$) are permanent cycles in the ASS for $A_1$; then they must survive to the $\mathrm{E}_{\infty}$-term because their images in the ASS for $tmf\wedge A_1$ do.  By comparing the bidegree of the classes of $M\cup N$ (respectively $P\cup Q$) and the vanishing line of the $\E_2$-term, we see that the differentials on classes of $M\cup N$ (respectively $P\cup Q$) of length greater than $4$ are trivial. The theorem now follows from Proposition \ref{petit stem}, \ref{non-self dual}, \ref{self dual} below. 
\end{proof}
\begin{Proposition}\label{petit stem} All classes in $M$ of $A_1[00]$ and $A_1[11]$ and in $P$ of $A_1[01]$ and $A_1[10]$ are permanent cycles in the respective ASS.
\end{Proposition}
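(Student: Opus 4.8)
The plan is to show that each chosen lift $\tilde c\in\Ext_{\A_*}^{*,*}(\H_*A_1)$ of a generator $c\in M$ (respectively $c\in P$) carries no nontrivial Adams differential, by playing the naturality of the Adams spectral sequence under $H\colon A_1\rightarrow tmf\wedge A_1$ against the injectivity of the restriction map $\mathrm{Res}$ supplied by the Approximation Theorem. By Theorem \ref{surjectivity of E2} such lifts exist, and by the bidegree count already made in the proof of Theorem \ref{surj_Hurw} differentials of length greater than $4$ vanish, so it suffices to kill $d_2$, $d_3$ and $d_4$ on $\tilde c$. Two facts drive the argument. First, $\mathrm{Res}(\tilde c)=c$ is a permanent cycle in the fully known Adams spectral sequence for $tmf\wedge A_1$, since $c$ detects one of the homotopy generators of Table \ref{List M} (respectively Table \ref{List P}). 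Second, by the corollary to Theorem \ref{Approximation}, $\mathrm{Res}$ is an isomorphism on $\E_2$ throughout the range $t-s<5s+2$ and an epimorphism throughout $t-s<5s+8$.

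The key numerical input is that every class of $M\cup P$ is confined to the thin band $t-s\leq 5s+11$ just above the line of slope $5$. Indeed the module generators $e_i$ all satisfy $t-s\leq 5s+3$, and multiplication by $w_2$, which adds $(48,8)$ to the $(\text{stem},\text{filtration})$ bidegree, raises the quantity $(t-s)-5s$ by exactly $48-5\cdot 8=8$, giving $t-s\leq 5s+11$ for the classes $w_2e_i$. A differential $d_r$ raises the filtration by $r$ and lowers the stem by $1$: starting from a class with $t-s\leq 5s+11$ it produces a class of filtration $s+r$ whose stem is at most $5s+10=5(s+r)+10-5r$, which for $r\geq 2$ is at most $5(s+r)$. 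Hence every target of $d_2$, $d_3$, $d_4$ on a class of $M\cup P$ satisfies $t-s\leq 5s$ for its own filtration, and in particular lies strictly inside the isomorphism range $t-s<5s+2$.

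On the $\E_2$-page the conclusion is then immediate: $\mathrm{Res}(d_2\tilde c)=d_2(\mathrm{Res}\,\tilde c)=d_2(c)=0$, and since the target of $d_2\tilde c$ lies in the isomorphism range we get $d_2\tilde c=0$, so $\tilde c$ survives to $\E_3$. Iterating, $c$ survives on the $tmf\wedge A_1$-side as a permanent cycle, so the induced map of $\E_3$- and $\E_4$-pages forces $d_3\tilde c=0$ and $d_4\tilde c=0$, whence $\tilde c$ is a permanent cycle. The classes of $M\cup P$ that are divisible by $\nu$ have been lifted $\nu$-divisibly, so the same argument applies verbatim to them.

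The step I expect to require the most care is precisely this passage from $\E_2$ to the higher pages, because $\mathrm{Res}$ is a priori only known to be an isomorphism on $\E_2$, whereas for $r\geq 3$ the differential $d_r$ lives on the subquotient $\E_r$. To control the boundaries one must check that the sources of all incoming differentials feeding the relevant target bidegrees themselves fall in a range where $\mathrm{Res}$ is surjective: a short computation shows these sources satisfy $(t-s)-5s\leq 6$, hence lie in the epimorphism range $t-s<5s+8$, which is exactly what is needed to upgrade the $\E_2$-isomorphism to injectivity of $\mathrm{Res}$ on $\E_r$ at the target bidegrees for $r\leq 4$. I would also note that the band bound $t-s\leq 5s+11$ is sharp, attained at the top classes $w_2e_{23},w_2e_{38},w_2e_{48},w_2e_{53}$ of $M$ whose $d_2$-targets land exactly on the line $t-s=5s$, so no coarser estimate would suffice.
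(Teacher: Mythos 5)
Your proof is correct, and for $d_2$ it is precisely the paper's argument: the $d_2$-targets of classes in $M$ (resp.\ $P$) land in the range $t-s\leq 5s$, where $\mathrm{Res}$ is an isomorphism by the Approximation Theorem \ref{Approximation}, so naturality plus the fact that these classes are permanent cycles in the ASS for $tmf\wedge A_1$ forces $d_2=0$. Where you diverge from the paper is in the treatment of $d_3$ and $d_4$, and there you work considerably harder than necessary: your own estimate shows that the target of $d_r$ on a class with $t-s\leq 5s+11$ has stem at most $5s'+10-5r$, where $s'$ is the target filtration, so for $r\geq 3$ the target lies \emph{strictly below} the vanishing line of Theorem \ref{Vanishing line}; since that theorem holds for $\Ext_{\A_*}$ itself (the case $n=\infty$), the target groups of $d_3$ and $d_4$ are zero, and these differentials vanish for trivial reasons. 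This is exactly what the paper does when it asserts that only $d_2$ can be nontrivial on $M$ and $P$. Your alternative route---upgrading injectivity of $\mathrm{Res}$ to the $\E_3$- and $\E_4$-pages by checking that sources of incoming differentials lie in the epimorphism range $t-s<5s+8$---is valid as far as it goes (the propagation also requires that surjectivity at those sources persists to higher pages, which holds because the differentials leaving them land below the vanishing line; you gloss this, but it works), yet it amounts to deploying machinery to prove that a map into the zero group is injective. The paper's version also makes visible why Proposition \ref{petit stem} is the easy case: for the classes of $N$ and $Q$ the targets of $d_3$ and $d_4$ genuinely can be nonzero, which is why the $g$-divisibility arguments of Theorem \ref{structure E2 A_1} and Lemmas \ref{free E3} and \ref{free E4} are needed there, whereas for $M$ and $P$ the vanishing line alone suffices once $d_2$ is handled.
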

\begin{proof} Inspection of bidegrees together with the Vanishing Line theorem \ref{Vanishing line} show that there can only be nontrivial differentials $d_2$ on classes of $M$ (respectively $P$) and moreover these differentials hit the region where there is an isomorphism between $\Ext_{\A_*}^{*,*}( \H_*(A_1))$ and $\Ext_{\A(2)_*}^{*,*}( \H_*(A_1))$. However, all classes of M (respectively $P$) are permanent cycles in the ASS for $tmf\wedge A_1$. Therefore, the differentials $d_2$ on the classes in $M$ (respectively $P$) in the ASS for $A_1$ are trivial.
\end{proof}

\begin{Lemma} \label{free E3} a) The target groups for $d_3$ on classes in $N$ are $g$-free. More precisely, $\E_3^{s,t}$ is $g$-free if 
$$(s,t)\in F_3:= \{s\geq 23, 5s\leq t-s\leq 5s+1\}\cup\{s\geq 28, 5s\leq t-s\leq 5s+9\}.$$
b) Suppose $s\geq 30$ and $t-s\leq 5s+20$ and let $s\in \E_3^{s,t}$. Then $x$ is weakly divisible by $g$, i.e., there exists an integer $n$ and a class $y\in E_3^{s-4, t-24}$ such that $g^{n+1}y = g^{n}x$.  
\end{Lemma}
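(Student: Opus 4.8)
The plan is to transport the structural results on the $\E_2$-term recorded in Theorem \ref{structure E2 A_1} across the differential $d_2$, using that $\E_3^{s,t}$ is the $d_2$-homology of $\E_2^{s,t}=\Ext_{\A_*}^{s,t}(\H_*A_1)$. The decisive formal input is that $g$ is a permanent cycle in the ASS for $A_1$: it detects $\ovk\in\pi_{20}S^0$ and acts through the $A_1\wedge DA_1$-module structure, so $d_r(g)=0$ and hence each $d_r$ is $\F[g]$-linear, i.e. $d_r(gz)=g\,d_r(z)$. Multiplication by $g$ raises $(s,t)$ by $(4,24)$ while $d_r$ raises it by $(r,r-1)$, and I will check repeatedly that the auxiliary classes produced below fall into the regions $F$ and $D$ of Theorem \ref{structure E2 A_1}. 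Note at the outset that $F_3\subset F$, so by part (i) of that theorem $\E_2^{s,t}$ is $g$-free for $(s,t)\in F_3$, and that $F_3$ is stable under multiplication by $g$.

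For part a), let $(s,t)\in F_3$ and let $[x]\in\E_3^{s,t}$ satisfy $g[x]=0$, represented by a $d_2$-cycle $x\in\E_2^{s,t}$ with $gx=d_2(u)$ for some $u\in\E_2^{s+2,t+23}$. A direct degree count shows that $u$ lies in the region $D$, so by Theorem \ref{structure E2 A_1}(ii) it is weakly $g$-divisible: there are $v\in\E_2^{s-2,t-1}$ and $m\geq0$ with $g^{m+1}v=g^m u$. Applying $d_2$ and using $\F[g]$-linearity gives $g^{m+1}d_2(v)=d_2(g^{m+1}v)=d_2(g^m u)=g^m d_2(u)=g^{m+1}x$, so that $g^{m+1}(x-d_2(v))=0$ in $\E_2^{s,t}$; since $(s,t)\in F_3\subset F$ this group is $g$-free, whence $x=d_2(v)$ and $[x]=0$. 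Thus multiplication by $g$ is injective on $\E_3^{s,t}$, and because $F_3$ is closed under $g$, iterating shows $\E_3^{s,t}$ has no $g^k$-torsion, i.e. it is $g$-free. This is precisely what is needed to kill $d_3$ on the classes of $N$, which are $\nu$-divisible, since $\nu$ is $g$-torsion and so any $d_3$ landing in a $g$-free group must vanish.

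For part b), represent $x\in\E_3^{s,t}$ by a $d_2$-cycle $\tilde x\in\E_2^{s,t}$. The hypotheses $s\geq30$ and $t-s\leq 5s+20$, together with the vanishing line $t-s\geq 5s$, place $(s,t)$ in the second component of $D$, so Theorem \ref{structure E2 A_1}(ii) yields $\tilde y\in\E_2^{s-4,t-24}$ and $n\geq0$ with $g^{n+1}\tilde y=g^n\tilde x$. I claim $\tilde y$ is a $d_2$-cycle: by $\F[g]$-linearity $g^{n+1}d_2(\tilde y)=d_2(g^{n+1}\tilde y)=d_2(g^n\tilde x)=g^n d_2(\tilde x)=0$, and a degree count puts $d_2(\tilde y)\in\E_2^{s-2,t-23}$ either below the vanishing line (where it is zero) or inside the $g$-free region $F$; in either case $g^{n+1}d_2(\tilde y)=0$ forces $d_2(\tilde y)=0$. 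Hence $y:=[\tilde y]\in\E_3^{s-4,t-24}$ is defined and $g^{n+1}y=g^n x$, which is the asserted weak $g$-divisibility on the $\E_3$-page.

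The essential difficulty here is not conceptual but lies in the degree bookkeeping: the argument works only because the shift $(4,24)$ of $g$ and the shift $(2,1)$ of $d_2$ move each auxiliary class ($u$, $v$, $\tilde y$ and $d_2(\tilde y)$) exactly into a region where Theorem \ref{structure E2 A_1} supplies either $g$-freeness or weak $g$-divisibility, or else into the vanishing range. The numerical constants defining $F_3$ and the hypotheses $s\geq30$, $t-s\leq 5s+20$ are precisely those that keep all of these inclusions valid at once; checking them for both components of each region is the only step that requires genuine care.
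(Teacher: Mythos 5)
Your proof is correct and takes essentially the same route as the paper's: both parts transport the $g$-freeness and weak $g$-divisibility of Theorem \ref{structure E2 A_1} across the $d_2$-differential using $\F[g]$-linearity, with identical degree bookkeeping (source of the $d_2$ hitting $g\E_2^{s,t}$ landing in $D$, and $d_2$ of the dividing class landing either below the vanishing line or in the $g$-free region $F$). Your degree counts are in fact the corrected version of the paper's own (the paper has harmless slips, e.g. writing $t'-s'=t-s-19$ where it should be $t-s-21$, and citing part (ii) where part (i) is meant), so nothing is missing.
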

\begin{proof}
a) The differential $d_2$-arriving in $g\E_2^{s,t}$ with $(s,t)\in F_3$ starts in $\E_2^{s',t'}$ with
$$(s', t') = (s+2, t+23), i.e., (t'-s')= t-s+21.$$
Then we have $$s'\geq 25, 5s' = 5s+10\leq t-s +10\leq 5s +11= 5s'+1$$
respectively, $$s'\geq30, 5s'=5s+10\leq t-s+10\leq 5s+19= 5s'+9.$$
So $(s', t')$ belongs to 
$$\{s'\geq 25, 5s'+11\leq t'-s'\leq 5s'+12\}\cup\{s'\geq 30, 5s'+11\leq t'-s'\leq 5s'+20\}.$$
In this region, Theorem \ref{structure E2 A_1} guarantees that all classes are weakly divisible by $g$ and this implies that $\E_3^{s,t}$ is $g$-free if $(s,t)\in F_3$. In effect, suppose $x$ is a class which lies in $F_3$ and which is $g$-torsion. The region $F_3$ is stable by multiplication by $g$, so we can assume that $gx=0$. Let $a$ be a representative of $x$. Then there exists $b\in \E_2$ such that $d_2(b) = ga$. By the argument above, we know that $a$ is weakly divisible by $g$, i.e., there exists an integer $n\geq 0$ and a class $c$ at the $\E_2$-term such that $g^na = g^{n+1}c$. Then we have
$$g^{n+1}d_2(c) = d_2(g^{n+1}c) = d_2(g^{n}b) =g^nd_2(b) = g^nga=g^{n+1}a.$$
However, $F_3$ belongs to the region which is $g$-free at the $\E_2$-term, hence $d_2(c) = a$, which means that $x=0$ at the $\E_3$-term.

b) We represent $x$ by a $d_2$-cycle $a$. By part $(ii)$ of Theorem \ref{structure E2 A_1}, there exists $b\in \E_2^{s-4, t-24}$ and an integer $n$ such that $g^na = g^{n+1}b$. It is enough to show that $b$ is a $d_2$-cycle. We first note that, since $a$ is a $d_2$-cycle, we have $$g^{n+1}d_2(b) = d_2(g^{n+1}b) = d_2(g^na) = 0.$$ Therefore it is enough to show that $d_2(b)$ is $g$-free. In fact, $d_2(b)$ is a class in $\E_2^{s',t'}$ with $s'=s-2$ and $$t'-s' =t-s - 19\leq 5s+20-19= 5(s-2)+11 = 5s'+11,$$ so it is $g$-free by Theorem \ref{structure E2 A_1} part $(ii)$
\end{proof}
\begin{Lemma}\label{free E4} The target groups for the differential $d_4$ on classes in N are $g$-free. More precisely, $\E_4^{s,t}$ is $g$-free if 
$$(s,t)\in F_4:= \{s\geq 29, 5s\leq t-s\leq 5s+4\}$$
\end{Lemma}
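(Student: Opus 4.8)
The plan is to repeat, one filtration higher, the argument used for part (a) of Lemma~\ref{free E3}: there one ruled out $g$-torsion at $\E_3$ by analysing the incoming $d_2$; here I would rule out $g$-torsion at $\E_4$ by analysing the incoming $d_3$. Recall that in the Adams spectral sequence $d_r\colon \E_r^{s,t}\to \E_r^{s+r,t+r-1}$ and that $g$ has bidegree $(4,24)$, so a $d_3$ hitting $g\,\E_3^{s,t}$ (which sits in bidegree $(s+4,t+24)$) originates from the class in bidegree $(s',t')=(s+1,t+22)$.

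First I would record two elementary facts about the region $F_4=\{s\geq 29,\ 5s\leq t-s\leq 5s+4\}$. It is stable under multiplication by $g$, since $g$ sends $(s,t)$ to $(s+4,t+24)$ and hence $t-s$ to $t-s+20$, preserving both inequalities. Moreover $F_4\subseteq F_3$, because $s\geq 29\geq 28$ and $5s\leq t-s\leq 5s+4\leq 5s+9$; consequently every class of $\E_3^{s,t}$ with $(s,t)\in F_4$ is already $g$-free by part (a) of Lemma~\ref{free E3}. Next, the degree bookkeeping above shows that the source $(s',t')=(s+1,t+22)$ of a $d_3$ into $g\,\E_3^{s,t}$ lies, for $(s,t)\in F_4$, in the region $\{s'\geq 30,\ 5s'+16\leq t'-s'\leq 5s'+20\}$; since this satisfies $s'\geq 30$ and $t'-s'\leq 5s'+20$, part (b) of Lemma~\ref{free E3} guarantees that every such source class is weakly divisible by $g$.

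With these two inputs the chase is formally identical to the one in Lemma~\ref{free E3}(a). Suppose $x\in \E_4^{s,t}$ with $(s,t)\in F_4$ is $g$-torsion; using $g$-stability of $F_4$ I may assume $gx=0$. Representing $x$ by a $d_3$-cycle $a\in \E_3^{s,t}$, the vanishing $gx=0$ at $\E_4$ means that $ga=d_3(b)$ for some $b\in \E_3^{s+1,t+22}$. Since $b$ is weakly $g$-divisible, write $g^{n+1}c=g^{n}b$ with $c\in\E_3^{s-3,t-2}$; then $g^{n+1}d_3(c)=d_3(g^{n+1}c)=d_3(g^{n}b)=g^{n}\,ga=g^{n+1}a$, so $g^{n+1}\bigl(d_3(c)-a\bigr)=0$. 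As $d_3(c)-a$ lies in $\E_3^{s,t}$, which is $g$-free, I conclude $d_3(c)=a$, whence $x=0$ at $\E_4$. This shows that $\E_4^{s,t}$ is $g$-free on $F_4$.

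I expect the only real work to be the degree bookkeeping of the middle paragraph: one must check that the incoming $d_3$ originates in the weakly $g$-divisible zone controlled by Lemma~\ref{free E3}(b) while the target stays in the $g$-free zone $F_3$ of Lemma~\ref{free E3}(a), and that $F_4$ is chosen narrow enough (width $4$ rather than the width $9$ of $F_3$) for both of these to hold simultaneously. Once the regions line up, the elimination of $g$-torsion is purely formal.
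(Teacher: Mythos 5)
Your proof is correct and follows essentially the same route as the paper: you rule out $g$-torsion at $\E_4$ by locating the source of the incoming $d_3$ in the region $\{s'\geq 30,\ 5s'+16\leq t'-s'\leq 5s'+20\}$ where Lemma~\ref{free E3}(b) gives weak $g$-divisibility, and then run the standard chase using the fact that $F_4$ is $g$-stable and contained in the $g$-free region $F_3$ of Lemma~\ref{free E3}(a). The degree bookkeeping $(s',t')=(s+1,t+22)$, i.e.\ $t'-s'=t-s+21$, agrees with the paper's computation.
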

\begin{proof}The differential $d_3$ arriving in $g\E_3^{s,t}$ with $(s,t)\in F_4$ starts in $E_3^{s',t'}$ with 
$$s'=s+1, t'-s'=t-s+21.$$Then we have $$s'\geq 30, 5s'+16 \leq t'-s'\leq 5s'+20.$$ By Lemma \ref{free E3}, all classes in such bidegrees are weakly divisible by $g$ and this implies that $\E_4^{s,t}$ is $g$-free if $(s,t)\in F_4$. In effect, suppose $x$ is a class which lies in $F_4$ and which is $g$-torsion. Because $F_4$ is stable by multiplication by $g$, we can assume that $gx=0$. Let $a\in \E_3^{s,t}$ be a representative of $x$. Then there exists $b\in \E_3$ such that $d_3(b)=ga$. By the argument above, $b$ is weakly divisible by $g$, i.e., there is an non-negative integer $n$ and a class $c\in\E_3$ such that $g^{n+1}c=g^{n}b$. Then we have $$g^{n+1}d_3(c) = d_3(g^{n+1}c) = d_3(g^{n}b)=g^nd_3(b)=g^nga=g^{n+1}a.$$
However, $F_4$ belongs to the region where $g$ acts freely at the $\E_3$-term by Lemma \ref{free E3} part $(i)$, hence $d_3(c)=a$ and so $x=0$ at the $\E_4$-term.
\end{proof}

\begin{Proposition} \label{non-self dual}The differentials $d_2$, $d_3$, $d_4$ on the classes in N for $A_1[00]$ and $A_1[11]$ are trivial. 
\end{Proposition}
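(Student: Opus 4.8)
The plan is to combine the $\nu$-divisibility of the classes in $N$ with the $g$-freeness of the relevant target groups recorded in Theorem \ref{structure E2 A_1}, Lemma \ref{free E3} and Lemma \ref{free E4}. Every generator in Table \ref{List N} has the form $\nu w_2^2 e_i$ or $\nu w_2^3 e_i$, and by the choice of lifts fixed in the proof of Theorem \ref{surj_Hurw} its lift to $\Ext_{\A_*}^{*,*}(\H_*A_1)$ is again divisible by $\nu$. Since $\nu$ is annihilated by $g^2$ in $\Ext_{\A_*}^{*,*}(\F)$ and $\Ext_{\A_*}^{*,*}(\H_*A_1)$ is a module over the latter, each lift $x$ satisfies $g^2x=0$; that is, every class of $N$ is $g$-torsion.

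The driving observation is that $g$, which detects $\overline{\kappa}$, is a permanent cycle, so multiplication by $g$ commutes with every Adams differential and $d_r(g^2x)=g^2d_r(x)$. Hence for a $g$-torsion class $x$ one has $g^2d_r(x)=d_r(g^2x)=0$, so $d_r(x)$ is itself $g$-torsion, and it suffices to show that the groups receiving $d_2$, $d_3$ and $d_4$ out of $N$ are $g$-free, because a $g$-torsion element of a $g$-free group vanishes. Recalling that an Adams $d_r$ lowers the stem by one and raises the filtration by $r$, I would run through the sixteen generators of Table \ref{List N} and check that the $d_2$-targets land in the region $F$ of Theorem \ref{structure E2 A_1}(i), the $d_3$-targets in the region $F_3$ of Lemma \ref{free E3}(a), and the $d_4$-targets in the region $F_4$ of Lemma \ref{free E4}, or else below the vanishing line of Theorem \ref{Vanishing line}, where the target is already zero. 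Because $g^2x=0$ holds already at the $\E_2$-page, the class $x$ remains $g$-torsion on every later page, so the conclusions for $d_3$ and $d_4$ apply once $x$ has survived the earlier differentials.

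The only genuine content is the bidegree bookkeeping of the previous paragraph; the regions $F$, $F_3$, $F_4$ were designed precisely so that these targets fall inside them. For example, the top generator $\nu w_2^3 e_{23}$, in stem $170$ and filtration $29$, has $d_2$-, $d_3$- and $d_4$-targets all in stem $169$ with filtrations $31$, $32$, $33$, lying exactly on the upper edges $5s+14$, $5s+9$ and $5s+4$ of $F$, $F_3$ and $F_4$; the remaining fifteen generators are dispatched identically, with several of their $d_3$- and $d_4$-targets falling below the vanishing line. This finite verification gives $d_2(x)=d_3(x)=d_4(x)=0$ for every $x\in N$, which is the assertion of the proposition. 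I expect the main obstacle, though still routine, to be exactly this simultaneous check that all three families of targets land in the engineered $g$-free regions.
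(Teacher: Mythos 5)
Your proposal is correct and follows essentially the same route as the paper: classes of $N$ are $\nu$-divisible, hence $g$-torsion at $\E_2$ (and at all later pages) since $g^2\nu=0$, and the differentials $d_2$, $d_3$, $d_4$ on them land in the $g$-free regions provided by Theorem \ref{structure E2 A_1}(i), Lemma \ref{free E3} and Lemma \ref{free E4} (or above the vanishing line), forcing them to vanish. Your bidegree check on the worst case $\nu w_2^3 e_{23}$ matches the paper's own remark that these regions were engineered to be sharp exactly there.
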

\begin{proof}  All classes of $N$ are divisible by $\nu$, so are $g$-torsion in the $\E_2$-term, hence are $g$-torsion at all terms. It is then enough to show that the target groups of differentials $d_2$, $d_3$, $d_4$ on the classes in N are $g$-free at the $\E_2$, $\E_3$, $\E_4$-terms, respectively. In effect, the target groups for the differential $d_2$ on the classes in N lie in the region $$\{s\geq 19, 5s\leq t-s\leq 5s+6\}\cup\{s\geq27, 5s\leq t-s\leq5s+14\}$$ consisting only of $g$-free classes, by Theorem \ref{structure E2 A_1}$(i)$. Next, a potential nontrivial differential $d_3$ or $d_4$ on the classes in N has its target in the region $F_3$ or $F_4$, respectively, which is $g$-free by Lemma \ref{free E3} or Lemma \ref{free E4}, respectively. 
\end{proof}
\begin{Proposition} \label{self dual}The differentials  $d_2$, $d_3$, $d_4$ on the classes in Q for $A_1[10]$ and $A_1[01]$ are trivial.
\end{Proposition}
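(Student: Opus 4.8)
The plan is to follow the proof of Proposition \ref{non-self dual} for every generator of $Q$ that is divisible by $\nu$, and to treat separately the four generators $w_2^2 e_0$, $w_2^2 e_5$, $w_2^2 e_{15}$, $w_2^2 e_{30}$, which are the only classes in $Q$ not divisible by $\nu$. As in the proof of Theorem \ref{surj_Hurw}, only $d_2$, $d_3$, $d_4$ can be nonzero on these classes, so these are the only differentials I need to rule out.

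For the $\nu$-divisible generators I would repeat the argument of Proposition \ref{non-self dual} verbatim: such a class is $g$-torsion (because $\nu$ is annihilated by $g^2$) and therefore $g$-torsion on every page, while a check of bidegrees shows that any nonzero target of $d_2$, $d_3$, $d_4$ lies in the $g$-free regions furnished by Theorem \ref{structure E2 A_1}(i) and Lemmas \ref{free E3}, \ref{free E4}. A $g$-torsion class cannot hit a nonzero $g$-free class, so all three differentials vanish.

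The four exceptional classes all lie on the line $t-s=5s+16$ and restrict to nonzero $g$-free classes in $\Ext_{\A(2)_*}(\H_*A_1)$, hence are themselves $g$-free; this is precisely why the torsion argument breaks down and a new idea is required. A $d_r$ on such a class lands on the line $t-s=5(s+r)+15-5r$, so $d_4$ and every longer differential fall below the vanishing line of Theorem \ref{Vanishing line} and vanish for free, the $d_2$-target lies on $t-s=5s+5$, and the $d_3$-target lies exactly on the edge $t-s=5s$. To kill $d_2$ I would invoke the characteristic-two Leibniz rule: choosing the lift to be $w_2^2\widetilde e_i$ (legitimate since $\mathrm{Res}$ is $w_2$-linear and $\widetilde e_i$ lifts $e_i\in P$, a permanent cycle by Proposition \ref{petit stem}), one gets $d_2(w_2^2\widetilde e_i)=d_2(w_2^2)\widetilde e_i+w_2^2 d_2(\widetilde e_i)$, where $d_2(w_2^2)=2w_2 d_2(w_2)=0$ in the ring spectral sequence for $A_1\wedge DA_1$ and $d_2(\widetilde e_i)=0$.

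The hard part will be $d_3$, since the source is $g$-free (so the torsion argument fails) and the squaring trick only controls $d_2$. Here the plan is to exploit that the $d_3$-target sits on the edge $t-s=5s$, where $\mathrm{Res}\colon\Ext_{\A_*}(\H_*A_1)\to\Ext_{\A(2)_*}(\H_*A_1)$ is an isomorphism by the Approximation Theorem \ref{Approximation}, together with the fact that $\mathrm{Res}$ is surjective in every bidegree by Theorem \ref{surjectivity of E2}. Since $w_2^2 e_i$ is a permanent cycle in the ASS for $tmf\wedge A_1$, the class $d_3(w_2^2\widetilde e_i)$ restricts to zero; representing it by a class $\hat z$ on the $\E_2$-page at the edge bidegree (legitimate because the outgoing $d_2$ there vanishes for dimension reasons, so $\E_3$ is the relevant quotient of $\E_2$), its restriction is a $d_2$-boundary in the $tmf\wedge A_1$ spectral sequence, and lifting the corresponding $d_2$-source along the surjection $\mathrm{Res}$ and applying $d_2$ produces a boundary with the same restriction as $\hat z$. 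The edge isomorphism then forces $\hat z$ itself to be a $d_2$-boundary, so $d_3(w_2^2\widetilde e_i)=0$, completing the verification that every class of $Q$ is a permanent cycle.
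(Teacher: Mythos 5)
Your proposal is correct, and for the four classes $w_2^2e_0$, $w_2^2e_5$, $w_2^2e_{15}$, $w_2^2e_{30}$ --- the heart of the proposition --- it runs along a genuinely different line than the paper, although the skeleton (splitting $Q$ into $\nu$-divisible classes treated as in Proposition \ref{non-self dual} versus these four $g$-free classes, and killing $d_4$ by the vanishing line) is the same. For $d_2$, the paper imports the explicit differentials $d_2(w_2e_{53})=g^5e_0$, etc., from Theorem 4.0.3 of \cite{Pha18} into the ASS for $A_1$ via the edge isomorphism, multiplies by the $d_2$-cycle $w_2^2$ to conclude that $d_2$ of each exceptional class is $g$-torsion, and then kills it because the target region is $g$-free by Theorem \ref{structure E2 A_1}; your computation $d_2(w_2^2\widetilde e_i)=d_2(w_2^2)\widetilde e_i+w_2^2d_2(\widetilde e_i)=0$, resting only on the characteristic-two squaring trick and Proposition \ref{petit stem}, is shorter and needs neither the explicit differentials nor the $g$-freeness of the target. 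For $d_3$, the paper again uses the explicit $d_2$'s of \cite{Pha18}, together with surjectivity of $\mathrm{Res}$ (Theorem \ref{surjectivity of E2}) and the edge isomorphism, to show that the target groups literally vanish on the $\E_3$-page; you instead prove that $\mathrm{Res}$ is injective on $\E_3$ at the edge bidegrees --- a formal consequence of the $\E_2$ edge isomorphism plus global surjectivity of $\mathrm{Res}$, since a $d_2$-boundary downstairs lifts to a $d_2$-boundary upstairs hitting the same edge class --- and conclude by naturality from the permanence of $w_2^2e_i$ in the ASS for $tmf\wedge A_1$. Your route is thus more self-contained (Theorem 4.0.3 of \cite{Pha18} is never invoked for these classes), while the paper's yields more explicit information (the target $\E_3$-groups are zero, and the four classes become $g$-torsion from $\E_3$ onward). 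The one point worth making explicit in your write-up is that replacing the lift of $w_2^2e_i$ fixed in Theorem \ref{surj_Hurw} by the product lift $w_2^2\widetilde e_i$ is harmless: for the main theorem one only needs \emph{some} lift of each class of $Q$ to be a permanent cycle, and the paper's own Leibniz manipulations tacitly make the same choice.
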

\begin{proof} In this proof, $A_1$ denotes the self dual versions $A_1[10]$ and $A_1[01]$.
The same argument as in the proof of Proposition \ref{non-self dual} shows that the differentials  $d_2$, $d_3$, $d_4$ on the classes in N which are divisible by $\nu$ are trivial. Consider the four other classes in $N$
\begin{equation}\label{g free in N}
w_2^2e_0, w_2^2e_5, w_2^2e_{15}, w_2^2e_{30}. 
\end{equation}
These classes are $g$-free at the $\E_2$-term. We now show that their $g$-multiple towers are truncated by differentials $d_2$ in the ASS for $A_1$. In effect, by Theorem 4.0.3 of \cite{Pha18}, the following differentials $d_2$ happen in the ASS for $tmf\wedge A_1$ 

$$d_2(w_2 e_{53}) = g^5 e_0, \ d_2(w_2e_{38}) = g^4e_5,$$
$$d_2(w_2e_{48})= g^4e_{15}, \ d_2(w_2e_{23}) = g^2e_{30}.$$
Since the targets of these differentials live on the line $t-s = 5s$ where the $\E_2$-term of the ASS for $A_1$ and that for $tmf\wedge A_1$ are isomorphic, the same differentials happen in the ASS for $A_1$. Moreover, the class $w_2^2$ is a cycle for the differential $d_2$ in the ASS for $A_1\wedge DA_1$ as shown the proof of Lemma 3.10 of \cite{BEM17}. Therefore, by the Leibniz rule, the following differentials $d_2$ happen in the ASS for $A_1$:
$$d_2(w_2^3 e_{53}) = g^5w_2^2 e_0, \ d_2(w_2^3e_{38}) = g^4w_2^2e_5,$$
$$d_2(w_2^3e_{48})= g^4w_2^2e_{15}, \ d_2(w_2^3e_{23}) = g^2w_2^2e_{30}.$$
It follows that the differentials $d_2$ on the classes of (\ref{g free in N}) are $g$-torsion. Moreover, the differentials $d_2$ on the latter arrive in $\E_2^{s,t}$ with $s\geq 18, 5s\leq t-s\leq 5s+6$ which consists only of $g$-free classes by Theorem \ref{structure E2 A_1}. Thus, the classes of \ref{g free in N} are $d_2$-cycles and become $g$-torsions in the $\E_3$-term.   
\\\\
The differentials $d_3$ on the classes of ($\ref{g free in N}$) arrive in $\E_3^{s,t}$ with $s\geq 19, t-s = 5s$. For these bidegrees, there is an isomorphism at the $\E_2$-term of the ASS for $A_1$ and that for $tmf\wedge A_1$; in particular, the related $\Ext$-groups are isomorphic to $\F$ and are generated by $g^4e_{15}$, $g^5e_0$, $g^4e_{30}$, $g^5e_{5}$, respectively. However, in the ASS for $tmf\wedge A_1$, by Theorem 4.0.3 of \cite{Pha18}, the latter are hit by differentials $d_2$, which are
$$d_2(w_2 e_{53}) = g^5 e_0, \ d_2(gw_2e_{38}) = g^5e_5,$$
$$d_2(w_2e_{48})= g^4e_{15}, \ d_2(g^2w_2e_{23}) = g^4e_{30}.$$  Because of Theorem \ref{surjectivity of E2} and the naturality of the ASS, $\E_3^{s,t} = 0$ for $s\geq 19, t-s =5s$ in the ASS for $A_1$. Thus, the differentials $d_3$ on the classes of (\ref{g free in N}) are trivial. 
\\\\
Finally, the differential $d_4$ on the classes of (\ref{g free in N}) land above the vanishing line, hence are trivial.
\end{proof}
\begin{Remark} To illustrate the proof of Proposition \ref{self dual}, we give some examples and more details. 
\\\\
a) First, a differential $d_3$ or $d_4$ on the first five classes in N listed in Table 2 has target living above the vanishing line, so it is trivial. 
\\\\
b) The other classes in N might support non-trivial differentials $d_3$. For example, a differential $d_3$ on the class $\nu w_2^2 e_{17}$ arrives in $\E_3^{s,t}$ with $s= 23, t-s = 115= 5s$; the latter group is $g$-free by Lemma \ref{free E3}. The worst case is the class $\nu w_2^3e_{23}$ on which a differential $d_3$ lives in $\E_3^{s,t}$ with $s=32, t-s = 169=5s+9$, which is $g$-free by Lemma \ref{free E3}. 
\\\\
c) Only the last eight classes in N as listed in Table 2 might support non-trivial differentials $d_4$. These classes lie in $\E_4^{s,t}$ with $s\geq 25, 5s\leq t-s\leq 5s+25.$ Then $d_4$ on these arrives in $\E_4^{s',t'}$ with $s'=s+4$ and $t'=t+3$, and so $$s'\geq 29, 5s'\leq t'-s'\leq 5s'+4.$$ This region consists only of $g$-free classes by Lemma \ref{free E4}
\end{Remark}
\subsection{The edge homomorphism of the topological duality spectral sequence}\label{TDSS}
\noindent
In this last section, we prove Theorem \ref{edge-homo}. Let us restate the theorem.
\begin{Theorem}\label{edge-proof} The edge homomorphism of the topological duality spectral sequence $$\pi_*(E_C^{h\mathbb{S}^1_C}\wedge A_1)\rightarrow \pi_*(E_C^{hG_{24}}\wedge A_1)$$ is surjective. Therefore, all differentials starting from the $0$-line of the topological duality spectral sequence are trivial.
\end{Theorem}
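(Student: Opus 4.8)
The plan is to identify the edge homomorphism with a $K(2)$-localization of the $tmf$-Hurewicz map and then invoke Theorem~\ref{tmf-Hurc}. First I would record what surjectivity of the edge map buys us. The duality resolution is a finite resolution concentrated in filtrations $0\le s\le 3$, so the associated spectral sequence admits no differentials \emph{into} the $0$-line (these would originate in negative filtration); consequently the image of the edge homomorphism $\pi_*(E_C^{h\mathbb{S}^1_C}\wedge A_1)\to \pi_*(\EG24\wedge A_1)$ is exactly $\E_\infty^{0,*}\subseteq \E_1^{0,*}=\pi_*(\EG24\wedge A_1)$. Hence the edge map is surjective if and only if $\E_\infty^{0,*}=\E_1^{0,*}$, which holds if and only if every differential $d_r$ emanating from the $0$-line vanishes. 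This yields the ``therefore'' clause automatically once surjectivity is proved, so the whole theorem reduces to surjectivity of the edge homomorphism.

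To prove surjectivity I would factor the edge map through the Hurewicz construction. Since the augmentation of the duality resolution carries the unit to the unit, the composite
$$A_1 = S^0\wedge A_1 \longrightarrow E_C^{h\mathbb{S}^1_C}\wedge A_1 \xrightarrow{\ \mathrm{edge}\ } \EG24\wedge A_1$$
is simply the unit map $A_1\to \EG24\wedge A_1$. Under the identification $\EG24\wedge A_1\simeq L_{K(2)}(tmf\wedge A_1)$, this unit coincides with the $K(2)$-localization of the $tmf$-Hurewicz map $H$, i.e. with
$$\pi_*(A_1)\xrightarrow{\ H_*\ }\pi_*(tmf\wedge A_1)\longrightarrow \pi_*\bigl(L_{K(2)}(tmf\wedge A_1)\bigr)=\pi_*(\EG24\wedge A_1).$$
Because the first map in the composite lands in $\pi_*(E_C^{h\mathbb{S}^1_C}\wedge A_1)$, the image of the edge homomorphism contains the image of this composite; it therefore suffices to show the composite is surjective.

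Now $H_*$ is surjective by Theorem~\ref{tmf-Hurc} (equivalently Theorem~\ref{surj_Hurw}), and the localization map $\pi_*(tmf\wedge A_1)\to \pi_*(L_{K(2)}(tmf\wedge A_1))$ is an isomorphism in non-negative stems by Theorem~5.3.22 of \cite{Pha18}. Thus the composite, and hence the edge homomorphism, is surjective onto $\pi_n(\EG24\wedge A_1)$ for all $n\ge 0$. To cover the negative stems I would use $v_2$-periodicity: both $E_C^{h\mathbb{S}^1_C}\wedge A_1$ and $\EG24\wedge A_1$ are $K(2)$-local, so the self-map $v_2^{32}$ of $A_1$ acts invertibly on each, and the edge map commutes with $\mathrm{id}\wedge v_2^{32}$ by naturality of smashing. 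Given $x\in\pi_n(\EG24\wedge A_1)$ with $n<0$, choose $k$ with $n+192k\ge 0$; then $(v_2^{32})^{k}x$ is hit by some $y$, whence $x=\mathrm{edge}\bigl((v_2^{32})^{-k}y\bigr)$ lies in the image as well. This establishes surjectivity in all degrees.

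The hard part will be the input identification of the second paragraph: one must justify carefully that $\EG24\wedge A_1\simeq L_{K(2)}(tmf\wedge A_1)$ and that the unit composite is \emph{precisely} the localized Hurewicz map. This rests on the comparison $L_{K(2)}tmf\simeq E_C^{hG_{48}}$ together with a Galois/descent argument relating $G_{48}$ to $G_{24}$ after smashing with $A_1$, and on the compatibility of the duality-resolution augmentation with the unit of $E_C^{h\mathbb{S}^1_C}$; these are exactly the points where the construction of $tmf$ and of the topological duality resolution must be matched. Once that identification and its naturality (in particular $v_2^{32}$-equivariance) are in hand, everything else is formal: surjectivity in non-negative stems is immediate from Theorem~\ref{tmf-Hurc} and \cite{Pha18}, and the periodicity argument upgrades it to all stems, forcing the vanishing of all differentials off the $0$-line.
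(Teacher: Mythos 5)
Your first paragraph (surjectivity of the edge map is equivalent to vanishing of all differentials off the $0$-line) and your last-paragraph periodicity argument for negative stems are both sound, and the telescope/periodicity input matches what the paper uses. But the central reduction in your second paragraph is not a technical point to be checked later -- it is false. The group $G_{24}$ does \emph{not} contain the Galois group $\Gal = \Gal(\FF_4/\F)$; the correct comparison with $tmf$ is $L_{K(2)}tmf \simeq E_C^{h(G_{24}\rtimes \Gal)}$, and by the splitting used in the paper (Lemma 1.37 of \cite{BG18}) one has $\EG24 \simeq \Gal_+\wedge E_C^{h(G_{24}\rtimes\Gal)}$. Since $\Gal\cong \Z/2$, this means $\EG24\wedge A_1$ is a wedge of \emph{two} copies of $L_{K(2)}(tmf\wedge A_1)$, not one. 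Your proposed sufficient condition then can never be verified: the unit map $A_1\to \EG24\wedge A_1$ factors through the Galois fixed points $E_C^{h(G_{24}\rtimes\Gal)}\wedge A_1\simeq L_{K(2)}(tmf\wedge A_1)$, so under the splitting its image on homotopy lies in a (twisted) diagonal copy of $\pi_*\bigl(L_{K(2)}(tmf\wedge A_1)\bigr)$ inside $\pi_*\bigl(L_{K(2)}(tmf\wedge A_1)\bigr)^{\oplus 2}$. A diagonal is a proper subgroup whenever the group is nonzero, which it certainly is here; hence the composite $A_1\to E_C^{h\S^1_C}\wedge A_1\to \EG24\wedge A_1$ is \emph{not} surjective on homotopy, and no care in setting up the identification can repair a route that factors through the unit from $A_1$.

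The missing idea -- and the point where the paper's proof genuinely differs from yours -- is to use the full source $\pi_*(E_C^{h\S^1_C}\wedge A_1)$ rather than only the image of the unit, via Galois equivariance. Since $G_{24}\subset \S^1_C\subset \S_C$ are all Galois-stable, the map $E_C^{h\S_C}\wedge A_1\to \EG24\wedge A_1$ (whose surjectivity on homotopy implies that of the edge map, because it factors through $E_C^{h\S^1_C}\wedge A_1$) is identified, through the equivalences $\Gal_+\wedge E_C^{h(\S_C\rtimes\Gal)}\simeq E_C^{h\S_C}$ and $\Gal_+\wedge E_C^{h(G_{24}\rtimes\Gal)}\simeq \EG24$, with $\Gal_+\wedge$ applied to the $K(2)$-localized Hurewicz map $L_{K(2)}A_1\to L_{K(2)}(tmf\wedge A_1)$, i.e.\ with two parallel copies of that map hitting the two wedge summands separately. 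Surjectivity of each copy is then exactly what your third paragraph proves (Theorem \ref{tmf-Hurc}, the equivalence $[(v_2^{32})^{-1}](tmf\wedge A_1)\simeq L_{K(2)}(tmf\wedge A_1)$ coming from Theorem 5.1.2 of \cite{Pha18} and $v_2^{32}=\Delta^8$ up to powers, and periodicity to cover all stems). So keep your input and your periodicity step, but replace the unit-map reduction by this Galois-equivariant comparison of the whole map.
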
 
\noindent
Here, $E_C$ is the Lubin-Tate spectrum associated to the formal completion $F_C$ of the supersingular elliptic curve $C: y^2 +y = x^3$ defined over $\FF_4$. Let $\S_C$ be the automorphism group of $\S_C$. The group $\S_C^1$ is defined to be the kernel of the reduced determinant map $\S_C\rightarrow \Z_2$. The automorphism group of $C$ is isomorphic to $G_{24}: Q_{8}\rtimes C_3$, where $Q_8$ is the quaternion group and $C_3$ is the cyclic group of order $3$, and $G_{24}$ naturally embeds into $\S_C^1$.
We refer the reader to \cite{BG18} for the construction of the topological duality resolution and to \cite{Pha18} for more motivations on the study of this spectral sequence for $A_1$. 
\\\\
The duality spectral sequence has four lines and converges to $\pi_*(E_C^{h\S^1_C}\wedge A_1)$. Its edge homomorphism $E_C^{h\S^1_C}\rightarrow E_C^{hG_{24}}$ is induced by the inclusion of subgroup $G_{24}\rightarrow \S^1_C$. It suffices to prove that the map $$\pi_*(E_C^{h\S_C}\wedge A_1)\rightarrow \pi_*(E_C^{hG_{24}}\wedge A_1)$$ induced by the inclusion of subgroup $G_{24}\rightarrow \S_C$ is surjective. Let $\Gal$ denote the Galois group of $\FF_4$ over $\F$. The latter acts on $\S_C$ and $G_{24}$, because $C$ is already defined over $\F$. By applying Lemma 1.37 of \cite{BG18}, there are homotopy equivalences $$\Gal_+\wedge E_C^{h(\S_C\rtimes \Gal)}\simeq E_C^{h\S_C}$$ and $$\Gal_+\wedge E_C^{h(G_{24}\rtimes \Gal)}\simeq E_C^{hG_{24}},$$ that fit into a commutative diagram
$$\xymatrix{ \Gal_+\wedge E_C^{h(\S_C\rtimes \Gal)}\ar[d] \ar[r]& \Gal_+\wedge E_C^{h(G_{24}\rtimes \Gal)}\ar[d]\\
E_C^{h\S_C} \ar[r]& E_C^{hG_{24}},
}$$
where horizontal maps are induced by respective inclusions of subgroup. By Devinatz-Hopkins in \cite{DH04}, $$L_{K(2)}S^0\simeq E_C^{h(\S_C\rtimes \Gal)}$$ and so the map $E_C^{h(\S_C\rtimes \Gal)}\rightarrow E_C^{h(G_{24}\rtimes \Gal)}$ is identified with the unit map $L_{K(2)}S^0\rightarrow E_C^{h(G_{24}\rtimes \Gal)}$. By \cite{DFHH14}, the latter factorises through the homotopy equivalence $$L_{K(2)}tmf\simeq E_C^{h(G_{24}\rtimes \Gal)}.$$
Therefore, it is reduced to show that the map $L_{K(2)}A_1\rightarrow L_{K(2)}(tmf\wedge A_1)$ induces a surjection on homotopy. In fact, the latter fits in the following commutative diagram
\begin{equation}\label{diagr}
\xymatrix{A_1 \ar[r]\ar[d] &tmf\wedge A_1\ar[d]\\
   [(v_2^{32})^{-1}] A_1 \ar[r] \ar[d]& [(v_2^{32})^{-1}] (tmf\wedge A_1) \ar[d] \\
   L_{K(2)} A_1 \ar[r] & L_{K(2)} (tmf\wedge A_1).
 }  
\end{equation}
We show in Theorem 5.1.2 of \cite{Pha18} that the natural map $[(\Delta^8)^{-1}](tmf\wedge A_1)\rightarrow L_{K(2)}(tmf\wedge A_1)$ is a homotopy equivalence. In addition, by the proof of Lemma \ref{multi v2}, $v_2^{32}$ is equal to $\Delta^8$ up to some power. Therefore, the map $[(v_2^{32})^{-1}] (tmf\wedge A_1) \rightarrow  L_{K(2)} (tmf\wedge A_1)$ in the diagram (\ref{diagr}) is a homotopy equivalence. On the other hand, the induced map in homotopy of the middle map of (\ref{diagr}) is identified with a direct limit of (\ref{hurew-map})
$$(v_2^{32})^{-1}\pi_*(A_1)\rightarrow (v_2^{32})^{-1}\pi_*(tmf\wedge A_1), $$ hence it is a surjection, because (\ref{hurew-map}) is so. Therefore, the composite $$[(v_2^{32})^{-1}]  A_1\rightarrow [(v_2^{32})^{-1}] (tmf\wedge A_1) \rightarrow  L_{K(2)} (tmf\wedge A_1)$$ induces a surjection in homotopy, hence so is the map $$L_{K(2)}A_1\rightarrow L_{K(2)}(tmf\wedge A_1),$$ because of the commutative diagram (\ref{diagr}). We are done.
\bibliographystyle{alpha}
\bibliography{mybibs}{}
\end{document}